\providecommand{\U}[1]{\protect\rule{.1in}{.1in}}
\newtheorem{theorem}{Theorem}
\newtheorem{corollary}[theorem]{Corollary}
\newtheorem{definition}[theorem]{Definition}
\newtheorem{lemma}[theorem]{Lemma}
\newtheorem{proposition}[theorem]{Proposition}
\newtheorem{remark}[theorem]{Remark}
\newenvironment{proof}[1][Proof]{\textbf{#1.} }{\ \rule{0.5em}{0.5em}}
\begin{document}

\title{Regularity of generalized Daubechies wavelets reproducing exponential polynomials}
\author{N. Dyn, O. Kounchev, D. Levin, H. Render}
\maketitle

\begin{abstract}
We investigate non-stationary orthogonal wavelets based on a non-stationary
interpolatory subdivision scheme reproducing a given set of exponentials. The
construction is analogous to the construction of Daubechies wavelets using the
subdivision scheme of Deslauriers-Dubuc. The main result is the smoothness of
these Daubechies type wavelets.

\end{abstract}

\bigskip\textbf{Acknowledgement:} The second named author was sponsored
partially by the Alexander von Humboldt Foundation, and the second and fourth
named authors were sponsored by Project DO--2-275/2008 \textquotedblright
Astroinformatics\textquotedblright\ with Bulgarian NSF.

\section{Introduction}

The main purpose of the present paper is to study a non-stationary
interpolatory subdivision scheme reproducing exponentials and the related
Daubechies type wavelets.

In \cite{deslaurierDubuc1989} G. Deslauriers and S. Dubuc investigated a
subdivision scheme based on polynomial interpolation of odd degree $2n-1.$
They proved the existence of the basic limit function $\Phi^{D_{2n}}$ for the
subdivision scheme and determined its order of regularity. Daubechies wavelets
are closely related to this construction since the autocorrelation function of
the Daubechies scaling function is equal to the basic limit function
$\Phi^{D_{2n}}$ (see \cite{daubechies}, \cite{Shen91}, or Section $5$ below).
Moreover, the regularity of the Daubechies wavelets can be derived from the
regularity of the basic limit function $\Phi^{D_{2n}}.$

The motivation for the present work originated from the attempt to define a
new concept of multivariate subdivision scheme \emph{in the spirit of
Deslauriers and Dubuc,} which is based respectively on multivariate
interpolation. According to the Polyharmonic Paradigm introduced in
\cite{okbook} polyharmonic interpolation on parallel hyperplanes (or on
concentric spheres) provides a proper generalization of the one-dimensional
polynomial interpolation and, respectively, generates a natural multivariate
subdivision scheme. We shall present in a forthcoming paper
\cite{DynKounchevLevinRender2} the \emph{polyharmonic subdivision scheme} as
an immediate generalization of the one-dimensional subdivision scheme of
Dubuc-Deslauriers. This subdivision scheme is \textbf{stationary }and
reproduces polyharmonic functions of fixed order. It is reduced to an infinite
family of one-dimensional inherently \textbf{non-stationary} subdivision
schemes reproducing exponential polynomials of special type. Thus the results
about subdivision schemes for exponentials are the building bricks of the
polyharmonic subdivision schemes discussed in detail in
\cite{DynKounchevLevinRender2}, \cite{kounchevKalagTsvet2010},
\cite{kounchevKalag2010}. 

On the other hand, \emph{ }the study of non-stationary subdivision schemes
reproducing general exponential polynomials were initiated in $2003$ in
\cite{dynlevinexp}, where the main results of Deslauriers and Dubuc were
generalized, in particular, existence of basic limit function and its
regularity were proved.\emph{ }For a given set of numbers $\left\{
\lambda_{j}\right\}  _{j=0}^{n},$ called sometimes frequencies, such a scheme
reproduces the space $V=\operatorname*{span}\left\{  e^{\lambda_{j}t}\right\}
_{j=0}^{n}$, and is characterized by a family of symbols $\{a^{[k]}%
(z)\}_{k\in\mathbb{Z}}$, where $a^{[k]}(z)$ defines the refinement rule at
level $k$. Ch. Micchelli proved in \cite{micchelliWAnonstationary} that the
symbols of these schemes are non-negative on the unit circle, whenever the set
of frequencies $\{\lambda_{j}\}_{j=0}^{n}$ are real and symmetric. Based on
this property of the symbols, he applied in \cite{micchelliWAnonstationary}
the construction of Daubechies to the fixed symbol $a^{[0]}(z),$ corresponding
to a set of exponentials defined by the frequencies $\left\{  \lambda
_{1},\lambda_{2},...,\lambda_{n},-\lambda_{1},-\lambda_{2},...,-\lambda
_{n},0\right\}  .$ However, the so constructed wavelets reproduce only the
function $e^{0}=1.$ In contrast to that, in the present paper we elaborate on
a genuine non-stationary scheme: we apply the same construction to the family
of symbols $\left\{  a^{[k]}(z)\right\}  _{k\in\mathbb{Z}}$ of a
\emph{non-stationaly} scheme reproducing $V$, and show that the resulting
non-stationary wavelets generate a \emph{Multiresolution Analysis} (MRA) which
contains $V$. Let us mention that in \cite{Vonesch} a similar construction is
proposed without the validation of the positivity of the symbols for all $k$,
which is necessary for the construction; see more discussion on the above
references at the end of the paper.

The main result of the present paper states that the order of regularity of
the new Daubechies type  wavelets reproducing $e^{\lambda_{j}x}$ for
$j=1,...,n,$ is at least as large as in the case of classical Daubechies
wavelets reproducing polynomials of degree $\leq n-1$ \textit{provided that}
the filters of the Daubechies scheme are chosen at each level in an
appropriate way. The proof of this result depends on the concept of
asymptotically equivalent subdivision schemes developed in
\cite{dynLevinJMAA95}. These regularity results are important for the
regularity of the multivariate polyharmonic subdivision schemes and the
corresponding multivariate wavelets considered in
\cite{DynKounchevLevinRender2}.

What concerns the computational aspects of the present results, let us mention
that in the case of a general set of frequencies $\left\{  \lambda
_{j}\right\}  _{j=1}^{n}$ it is difficult to find concise expressions for the
symbols of subdivision and respectively for the filters of the wavelets, in
particular for the analogs of the important classical polynomials $Q_{n}$
(appearing in equality (\ref{Qn-1}) below), and the polynomials $b^{[k]}(z)$
(in equality (\ref{abk}) below). However we have to note the remarkable fact
that in the special cases of interest for the multivariate polyharmonic
subdivision scheme the corresponding polynomials can be explicitly
constructed, see \cite{DynKounchevLevinRender2}, \cite{kounchevKalagTsvet2010}%
, \cite{kounchevKalag2010}, \cite{kounchevKalagl2011},
\cite{kounchevKalagl2012}. Also, in \cite{kounchevKalagTsvet2010},
\cite{kounchevKalag2010}, \cite{kounchevKalagl2011}, \cite{kounchevKalagl2012}
the polyharmonic subdivision wavelets of Daubechies type were successfully
applied to problems of Image Processing, where the experiments were carried
out on benchmark images as well as on some astronomical images.

Finally, let us mention that after \cite{deBoorDeVoreRon} non-stationary
Wavelet Analysis reproducing exponentials have appeared in a natural way in
the monograph \cite{okbook} in the context of multivariate polyspline wavelets.

The paper is organized as follows: in Section 2 we shall review basic notions
for non-stationary subdivision schemes. Section 3 is devoted to subdivision
schemes for exponential polynomials and we shall give an improved estimate of
the order of regularity of the basic limit function. In Section 4 we shall
briefly review non-stationary multiresolutional analysis. In Section 5 we
shall give present the construction of the Daubechies scaling function via
subdivision schemes and estimate the order of regularity of the new Daubechies
type wavelets.

Finally, we introduce some notations: $\mathbb{N}_{0}$ denotes the set of all
natural numbers including zero, $\mathbb{Z}$ denotes the set of all integers
and we define the grid $2^{-k}\mathbb{Z=}\left\{  j/2^{k}:j\in\mathbb{Z}%
\right\}  .$ By $C^{\ell}\left(  \mathbb{R}\right)  $ we denote the set of all
functions $f:\mathbb{R}\rightarrow\mathbb{C}$ which are $\ell$-times
continuosly differentiable where $\ell\in\mathbb{N}_{0}.$ The Fourier
transform of an integrable function $f:\mathbb{R}\rightarrow\mathbb{C}$ is
defined by
\[
\widehat{f}\left(  \omega\right)  =\int\limits_{-\infty}^{\infty}f\left(
x\right)  e^{-ix\omega}dx.
\]

\section{Basics in nonstationary subdivision schemes}

In this section we shall briefly recall notations and definitions used in
non-stationary subdivision schemes for functions on the real line. The formal
definition of a subdivision scheme is the following:

\begin{definition}
\label{DSubdivision} A (non-stationary) \textbf{subdivision scheme} $S_{0}$ is
given by a family of sequences $\left(  a_{j}^{\left[  k\right]  }\right)
_{j\in\mathbb{Z}}$ of complex numbers indexed by $k\in\mathbb{N}_{0}$, called
the \emph{masks or rules at level} $k,$ such that $a_{j}^{\left[  k\right]
}\neq0$ only for finitely many $j\in\mathbb{Z}.$ Given a sequence of numbers
$f^{0}\left(  j\right)  ,j\in\mathbb{Z},$ one defines inductively a sequence
of functions $f^{k+1}:2^{-\left(  k+1\right)  }\mathbb{Z}\rightarrow
\mathbb{C}$ by the rule
\begin{equation}
f_{j}^{k+1}:=f^{k+1}\left(  \frac{j}{2^{k+1}}\right)  =\sum_{l\in\mathbb{Z}%
}a_{j-2l}^{\left[  k\right]  }f^{k}\left(  \frac{l}{2^{k}}\right)
\qquad\text{for }j\in\mathbb{Z}. \label{Subdivision}%
\end{equation}
If for each $k\in\mathbb{N}_{0}$ the masks $a_{j}^{\left[  k\right]  }%
,j\in\mathbb{Z}$, are identical the scheme is said to be \textbf{stationary}.
The subdivision scheme is called \textbf{interpolatory} if for all $k\geq0$
and $j\in\mathbb{Z}$ holds
\begin{equation}
f_{2j}^{k+1}=f_{j}^{k}. \label{Subdivision-Interpolatory}%
\end{equation}

\end{definition}

An important tool in subdivision schemes is the \emph{symbol} $a^{\left[
k\right]  }$ of the rule, or mask $a_{j}^{\left[  k\right]  },j\in\mathbb{Z}$,
defined by
\[
a^{\left[  k\right]  }\left(  z\right)  :=\sum_{j\in\mathbb{Z}}a_{j}^{\left[
k\right]  }z^{j}%
\]
which is Laurent polynomial since we assume that the sequence $a_{j}^{\left[
k\right]  },j\in\mathbb{Z},$ has finite support. We shall identify the
subdivision scheme $S_{0}$ by its masks $a_{j}^{\left[  k\right]  }%
,j\in\mathbb{Z},$ $k\in\mathbb{N}_{0}$ or its symbols $a^{\left[  k\right]
},k\in\mathbb{N}_{0}.$ It is clear that a subdivision scheme is stationary if
and only if $a^{[k]}(z)=a(z)$, for all $k\in\mathbb{N}_{0}$. Moreover the
scheme is interpolatory if and only if $a_{2j}^{\left[  k\right]  }%
=\delta_{0,j}$, for all $j\in\mathbb{Z}.$ In terms of the symbol $a^{\left[
k\right]  }$ it is easy to prove that this is equivalent to the identity
\[
a^{[k]}(z)+a^{[k]}(-z)=2,\ \ z\in\mathbb{C}\setminus\{0\}.
\]

\begin{definition}
\label{Dconvergent} Let $\ell\in\mathbb{N}_{0}.$ A subdivision scheme $S_{0}$
is called $C^{\ell}$-\textbf{convergent} if for any bounded initial sequence
$\{f_{j}^{0}\}_{j\in\mathbb{Z}}$ there exists $F\in$ $C^{\ell}\left(
\mathbb{R}\right)  $ such that
\begin{equation}
\lim_{k\rightarrow\infty}\sup_{j\in\mathbb{Z}}\left\{  \left\vert F\left(
j2^{-k}\right)  -f_{j}^{k}\right\vert \right\}  =0.\label{Convergent}%
\end{equation}
The function $F$ is called \textbf{limit function of the subdivision scheme}
for the initial sequence $\{f_{j}^{0}\}_{j\in\mathbb{Z}}.$ The limit function
for the initial data function $f_{j}^{0}=\delta_{0j}$ (here $\delta_{0j}$ is
the Kronecker symbol) is called \textbf{basic limit function} of the scheme
and it is denoted by $\Phi_{0}.$
\end{definition}

A subdivision scheme is called \textbf{convergent} if it is just $C^{0}%
$-convergent. A central problem in the theory of subdivision schemes is to
estimate the order of regularity of a limit function whenever it exists. Let
us remark that for an \emph{interpolatory} scheme the limit function $F$ in
Definition \ref{Dconvergent} is easily computed for dyadic numbers $t=\frac
{j}{2^{k}}$ with $k\geq0$ and $j\in\mathbb{Z}$ through the values $f_{j}^{k}$
and convergence of the scheme asks whether it has a continuous extension over
the whole $\mathbb{R}.$

\begin{remark}
\label{Rem3} It is easy to see that for a convergent subdivision scheme
$S_{0}$ with symbols $a^{[k]}(z):=\sum_{j=-N}^{N}a_{j}^{[k]}z^{j}$ for all
$k\in\mathbb{N}_{0}$, the support of the basic limit function $\Phi_{0}$ is
contained in $[-N,N]$.
\end{remark}

In the non-stationary case the following concept is of importance:

\begin{definition}
Let $S_{0}$ be a subdivision scheme given by the masks $\left(  a_{j}^{\left[
k\right]  }\right)  _{j\in\mathbb{Z}}$. For any natural number $m\in
\mathbb{N}_{0}$ define a new subdivision scheme $S_{m}$ by means of the masks
of level $k:$%
\[
a^{\left[  k\right]  ,m}\left(  z\right)  :=a^{\left[  k+m\right]  }\left(
z\right)  \text{ for all }k\in\mathbb{N}_{0}.
\]

\end{definition}

The following result is proved in \cite{dynlevinActa}\textbf{: }

\begin{theorem}
If $S_{0}$ is a convergent subdivision scheme then $S_{m}$ is convergent for
any $m\in\mathbb{N}_{0}.$

The basic limit function of this scheme is \textbf{denoted} by $\Phi_{m}$.
\end{theorem}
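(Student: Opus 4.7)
The strategy is to exploit the observation that $S_m$ is the ``tail'' of $S_0$ after a grid rescaling. Given any bounded initial datum $f^{0}$ for $S_0$, let $f^m$ denote the $m$-th iterate, regarded as an element of $\ell^{\infty}(\mathbb{Z})$ by relabelling the sequence $(f^m_j)_{j\in\mathbb{Z}}$ (ignoring its intrinsic grid $2^{-m}\mathbb{Z}$). Because the mask of $S_m$ at level $k$ is $a^{[m+k]}$, the iterates $g^k$ produced by $S_m$ from $g^0:=f^m$ coincide, as sequences in $j$, with the iterates $f^{m+k}$ of $S_0$. Convergence of $S_0$ delivers a continuous $F$ with $\sup_j |F(j\cdot 2^{-(m+k)})-f^{m+k}_j|\to 0$, and setting $G(x):=F(x/2^m)$ yields $\sup_j |G(j\cdot 2^{-k})-g^k_j|\to 0$. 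Thus $S_m$ is convergent, with limit $F(\cdot/2^m)$, on every $g^0$ in the image of the bounded operator $A^{[m-1]}\cdots A^{[0]}:\ell^\infty(\mathbb{Z})\to\ell^\infty(\mathbb{Z})$ that produces $f^m$ from $f^0$.

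To pass from this restricted class to every bounded initial datum for $S_m$, I would reduce to the existence of the basic limit function $\Phi_m$. By linearity and the translation invariance of each mask, the $k$-th iterate of $S_m$ from bounded $g^0$ equals $\sum_l g^0_l\,\Phi_m^{(k)}(\cdot -l)$, where $\Phi_m^{(k)}$ is the $k$-th iterate of $S_m$ starting from $\delta_{0,\cdot}$. Since the masks have finite support, the $\Phi_m^{(k)}$ are supported in a common bounded set (in the spirit of Remark \ref{Rem3}), so the sum is locally finite. Uniform convergence $\Phi_m^{(k)}\to\Phi_m$ to a continuous, compactly supported $\Phi_m$ then delivers local uniform convergence of $g^k$ to the continuous function $\sum_l g^0_l\,\Phi_m(\cdot -l)$. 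It therefore suffices to show that $\Phi_m^{(k)}$ converges uniformly.

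The main obstacle is this last step, because $\delta_{0,\cdot}$ generally does \emph{not} lie in the image of $A^{[m-1]}\cdots A^{[0]}$ (for an interpolatory $S_0$, that image is constrained off $2^m\mathbb{Z}$ by the interpolation rule), so the level-shift identity of the first paragraph cannot be applied directly to the basic initial datum. I would bridge this gap by invoking the standard characterization of convergence for a non-stationary subdivision scheme via uniform contractivity of products of the associated first-difference mask operators, developed in \cite{dynLevinJMAA95}. The contractivity condition required for $S_m$ involves only the masks $a^{[m]},a^{[m+1]},\dots$, whereas the corresponding condition for $S_0$ involves the entire sequence $a^{[0]},a^{[1]},\dots$; because such a uniform bound automatically survives the passage to a tail of the mask sequence, the contractivity for $S_m$ is an immediate consequence of the contractivity for $S_0$. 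This yields uniform convergence of $\Phi_m^{(k)}$ to a continuous compactly supported $\Phi_m$ and, combined with the reduction above, completes the proof.
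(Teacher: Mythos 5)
The paper does not actually prove this statement: it is quoted from \cite{dynlevinActa} with no argument given, so the only question is whether your proof stands on its own. Your first two steps (identifying the iterates of $S_m$ on data in the range of the first $m$ refinement operators with the tail of an $S_0$-orbit, and reducing convergence on all bounded data to uniform convergence of the basic sequence $\Phi_m^{(k)}$ via linearity, shift-invariance and the common compact support) are sound, and you correctly isolate the real difficulty. But the bridge you propose for that difficulty does not hold up. You appeal to a ``characterization of convergence of a non-stationary scheme via uniform contractivity of products of the first-difference mask operators'' from \cite{dynLevinJMAA95}, and two things go wrong. First, the first-difference mask at level $k$ exists only when $a^{[k]}(-1)=0$; this is a necessary condition for convergence of \emph{stationary} schemes but not of non-stationary ones, and the schemes this paper is about are precisely the counterexample: by (\ref{NSreconstruction2}) their symbols vanish at $-\exp\left(-2^{-(k+1)}\lambda_j\right)$ rather than at $-1$, so the classical difference schemes need not exist and the tool you want to restrict to the tail is simply unavailable at the level of generality of the theorem. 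Second, even where the difference schemes do exist, what \cite{dynLevinJMAA95} supplies is the \emph{sufficiency} of contractivity (used there in tandem with asymptotic equivalence to a convergent stationary scheme); the \emph{necessity} — that convergence of $S_0$ forces a contractivity bound uniform in the starting level, which is exactly what you need in order to pass to the tail — is not established there, and the stationary proof of necessity (one operator, hence a spectral-radius argument) does not carry over. From the per-datum convergence in Definition \ref{Dconvergent} a uniform-boundedness argument yields only $\sup_k\left\Vert T_k\right\Vert<\infty$ for the relevant error operators, not $\left\Vert T_k\right\Vert\rightarrow 0$, so uniform contractivity cannot be extracted for free.

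A symptom that some additional input is unavoidable: with Definition \ref{Dconvergent} taken literally the statement is false without a non-degeneracy hypothesis. If $a^{[0]}_j=0$ for all $j$, then every orbit of $S_0$ vanishes from level $1$ onward, so $S_0$ is convergent no matter what $a^{[1]},a^{[2]},\dots$ are, while $S_1$ can be chosen divergent. Hence no purely soft reduction from the convergence of $S_0$ to that of $S_m$ can succeed; the proof in \cite{dynlevinActa} rests on the structural assumptions imposed there on non-stationary schemes, and your third step needs to import such assumptions explicitly rather than cite a characterization that does not exist in the form you require.
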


The following result is well-known, see e.g. the proof of Theorem 2.1 in
\cite{CoDy96}, for further results see also \cite{Prot02}.

\begin{proposition}
\label{PropSuff}Let $S_{0}$ be a subdivision scheme with symbols $a^{[k]}(z)$
for $k\in\mathbb{N}_{0}$ such that

(i) $a_{j}^{\left[  k\right]  }=0$ for all $\left\vert j\right\vert \geq N$
and all $k\in\mathbb{N}_{0}$ for some fixed integer $N>0.$

(ii) there is a constant $M>0$ such that $\left\vert a^{\left[  k\right]
}\left(  e^{i\omega}\right)  \right\vert \leq M$ for all $k\in\mathbb{N}_{0}$
and
\begin{equation}
\sum_{k=0}^{\infty}\left\vert \frac{1}{2}a^{\left[  k\right]  }\left(
1\right)  -1\right\vert <\infty\text{.} \label{eqCC}%
\end{equation}

Then the infinite product
\[
\prod\limits_{k=1}^{\infty}\frac{1}{2}a^{\left[  k-1\right]  }\left(
e^{i\omega2^{-k}}\right)
\]
converges uniformly on compact subsets of $\mathbb{R}$.
\end{proposition}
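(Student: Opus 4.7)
The plan is to write each factor as $1+u_{k}(\omega)$ with $u_{k}(\omega)=\frac12 a^{[k-1]}(e^{i\omega 2^{-k}})-1$, and verify that $\sum_{k}|u_{k}(\omega)|$ converges uniformly on each compact set $|\omega|\le R$. Once this is established, a standard argument on infinite products (take logs and use $|\log(1+u)|\le 2|u|$ once $|u|<1/2$) will show that $\prod_{k=1}^{\infty}(1+u_{k}(\omega))$ converges uniformly on compact subsets of $\mathbb{R}$.

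To estimate $u_{k}(\omega)$ I would split it as
\[
u_{k}(\omega)=\left(\tfrac12 a^{[k-1]}(1)-1\right)+\tfrac12\bigl(a^{[k-1]}(e^{i\omega 2^{-k}})-a^{[k-1]}(1)\bigr).
\]
The first summand is summable in $k$ by hypothesis (\ref{eqCC}), independently of $\omega$. For the second summand I need a Lipschitz-type bound that decays in $k$. Here the key observation is that the uniform bound $|a^{[k-1]}(e^{i\omega})|\le M$ in hypothesis (ii) controls the individual mask coefficients via Fourier inversion: since $a^{[k-1]}(e^{i\omega})=\sum_{|j|\le N}a_{j}^{[k-1]}e^{ij\omega}$, one has
\[
a_{j}^{[k-1]}=\frac{1}{2\pi}\int_{0}^{2\pi}a^{[k-1]}(e^{i\omega})e^{-ij\omega}d\omega,
\]
so $|a_{j}^{[k-1]}|\le M$ for all $j$ and all $k$.

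Combining this coefficient bound with $|e^{it}-1|\le|t|$ gives
\[
\bigl|a^{[k-1]}(e^{i\omega 2^{-k}})-a^{[k-1]}(1)\bigr|\le M\sum_{|j|\le N}|j|\,|\omega|\,2^{-k}\le MN(N+1)|\omega|\,2^{-k}.
\]
Hence for $|\omega|\le R$,
\[
|u_{k}(\omega)|\le\left|\tfrac12 a^{[k-1]}(1)-1\right|+\tfrac12 MN(N+1)R\cdot 2^{-k},
\]
and the right-hand side is summable in $k$ uniformly in $\omega$, as desired.

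The routine part is the passage from uniform summability of $u_{k}$ to uniform convergence of the product: once $|u_{k}(\omega)|<\tfrac12$ for $k$ large (which holds uniformly on the compact, since the bound above tends to $0$), the logarithmic series $\sum_{k}\log(1+u_{k}(\omega))$ converges uniformly, so its exponential does too. The only step that requires care is extracting $|a_{j}^{[k]}|\le M$ from the circle bound — I expect this to be the main obstacle in the sense that it is the one place where hypothesis (ii) is used in its full strength rather than just at $z=1$.
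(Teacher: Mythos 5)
Your proof is correct, and since the paper does not prove Proposition \ref{PropSuff} itself (it only cites the proof of Theorem 2.1 in the Cohen--Dyn reference), there is nothing to diverge from: your argument --- summability of $u_k(\omega)=\tfrac12 a^{[k-1]}(e^{i\omega 2^{-k}})-1$ via the split into the $z=1$ term controlled by (\ref{eqCC}) plus a Lipschitz term of size $O(R\,2^{-k})$, with the uniform coefficient bound $|a_j^{[k]}|\le M$ extracted from hypothesis (ii) by Fourier inversion --- is exactly the standard one behind the cited result. The only cosmetic remark is that once $\sum_k\sup_{|\omega|\le R}|u_k(\omega)|<\infty$ is in hand, you can avoid logarithms entirely by noting that the partial products are uniformly bounded by $\exp\bigl(\sum_k|u_k|\bigr)$ and form a uniformly Cauchy sequence, which sidesteps any worry about factors near zero.
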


Note that by Remark 3 the basic limit function $\Phi_{0}$ of a convergent
subdivision scheme satisfying (i) in Proposition \ref{PropSuff} has compact
support. Hence the continuous function $\Phi_{0}$ is integrable and square
integrable. It follows that the Fourier transform $\widehat{\Phi_{0}}$ of
$\Phi_{0}$ is well-defined, continuous and square integrable.

\begin{proposition}
\label{PropProd}Let $S_{0}$ be a convergent subdivision scheme with symbols
$a^{[k]}(z)$ for $k\in\mathbb{N}_{0}$ satisfying (i) and (ii) in Proposition
\ref{PropSuff}. Then
\[
\widehat{\Phi_{0}}\left(  \omega\right)  =\prod\limits_{k=1}^{\infty}\frac
{1}{2}a^{\left[  k-1\right]  }\left(  e^{i\omega2^{-k}}\right)  .
\]

\end{proposition}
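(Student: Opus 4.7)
The plan is to carry out a cascade-algorithm argument. I would introduce piecewise-constant approximants built from the subdivision iterates,
$$g_K(x) := \sum_{j\in\mathbb{Z}} f_j^K\,\chi_{[j/2^K,\,(j+1)/2^K)}(x),$$
where $f_j^0=\delta_{0j}$ and $(f_j^k)$ is generated by the subdivision rule (\ref{Subdivision}). The two tasks are (a) to compute $\widehat{g_K}$ explicitly and extract the infinite product, and (b) to show $g_K\to\Phi_0$ strongly enough to pass Fourier transforms to the limit.

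For (a), I set $A_K(\omega):=\sum_j f_j^K e^{-ij\omega/2^K}$. The initial datum gives $A_0\equiv 1$, and substituting (\ref{Subdivision}) with the change of summation index $m=j-2l$ produces the recurrence $A_{k+1}(\omega)=a^{[k]}(e^{-i\omega/2^{k+1}})\,A_k(\omega)$. Iterating and then multiplying by the Fourier transform of a single box of width $2^{-K}$ yields
$$\widehat{g_K}(\omega)=\frac{2^K\bigl(1-e^{-i\omega/2^K}\bigr)}{i\omega}\,\prod_{k=1}^{K}\tfrac{1}{2}\,a^{[k-1]}\bigl(e^{-i\omega/2^k}\bigr).$$
The sinc-type prefactor tends to $1$ uniformly on compact sets, and by Proposition \ref{PropSuff} the infinite product converges uniformly on compact subsets of $\mathbb{R}$, so the right-hand side converges to the infinite product in the claim.

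For (b), hypothesis (i) of Proposition \ref{PropSuff} together with induction on (\ref{Subdivision}) forces $\mathrm{supp}(g_K)\subset[-N,N]$ uniformly in $K$, and $\Phi_0$ is supported in the same interval by Remark \ref{Rem3}. The uniform-in-$j$ convergence $\sup_j|f_j^K-\Phi_0(j/2^K)|\to 0$ supplied by Definition \ref{Dconvergent}, combined with the uniform continuity of $\Phi_0$ on $[-N,N]$ (which absorbs the step-function interpolation error of width $2^{-K}$), delivers $g_K\to\Phi_0$ uniformly on $\mathbb{R}$. Uniform convergence on a fixed compact support upgrades to $L^1$ convergence, hence $\widehat{g_K}\to\widehat{\Phi_0}$ uniformly on $\mathbb{R}$; comparing with the explicit formula in (a) identifies $\widehat{\Phi_0}$ with the product.

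The only genuinely delicate step is the uniform convergence $g_K\to\Phi_0$: it needs the uniform-in-$j$ form of the convergence in Definition \ref{Dconvergent} together with the compact-support control in order to upgrade uniform convergence to $L^1$ convergence. Every other ingredient — the induction establishing the recurrence for $A_K$, the change of summation index, the Taylor expansion of the sinc-type prefactor, and the term-by-term passage to the limit in the explicit formula — is elementary.
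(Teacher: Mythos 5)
Your proposal is correct, and it fills a genuine gap: the paper states Proposition \ref{PropProd} without proof, treating it as a standard fact (the surrounding text only records that $\Phi_{0}$ has compact support so that $\widehat{\Phi_{0}}$ is well defined). Your cascade argument is the standard route and all its steps check out: the recurrence $A_{k+1}(\omega)=a^{[k]}(e^{-i\omega/2^{k+1}})A_{k}(\omega)$ follows from the substitution $m=j-2l$ in (\ref{Subdivision}); the supports of the iterates stay in $[-N,N]$ by induction using hypothesis (i); the uniform-in-$j$ convergence in Definition \ref{Dconvergent} plus uniform continuity of the compactly supported $\Phi_{0}$ gives $g_{K}\rightarrow\Phi_{0}$ uniformly, hence in $L^{1}$, hence $\widehat{g_{K}}\rightarrow\widehat{\Phi_{0}}$ pointwise; and Proposition \ref{PropSuff} supplies convergence of the partial products. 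The one detail worth flagging is the sign in the exponent: with the paper's convention $\widehat{f}(\omega)=\int f(x)e^{-ix\omega}dx$, your computation produces $\prod_{k\geq1}\frac{1}{2}a^{[k-1]}(e^{-i\omega 2^{-k}})$, whereas the proposition is stated with $e^{+i\omega 2^{-k}}$. These agree when the masks are symmetric (as for the Deslauriers--Dubuc and exponential interpolatory schemes), but not for the non-symmetric Daubechies filters $M^{[k]}$ to which the proposition is later applied; the paper itself is inconsistent on this point (compare (\ref{Fprod}) with (\ref{sinb1})), so you should state explicitly which convention your product uses rather than silently matching the displayed formula.
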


\begin{definition}
A subdivision schemes $S_{0}$ with masks $a_{j}^{\left[  k\right]  }%
,j\in\mathbb{Z}$, $k\in\mathbb{N}_{0}$ \textbf{reproduces a continuous
function} $f:\mathbb{R}\rightarrow\mathbb{C}$ at level $k\in\mathbb{N}_{0}$ if
for all $j\in\mathbb{Z}$
\begin{equation}
f\left(  \frac{j}{2^{k+1}}\right)  =\sum_{l\in\mathbb{Z}}a_{j-2l}^{\left[
k\right]  }f\left(  \frac{l}{2^{k}}\right)  . \label{eqReprod}%
\end{equation}
We say that $S_{0}$ \textbf{reproduces} $f$ \textbf{stepwise} if it reproduces
$f$ at each level $k\in\mathbb{N}_{0}.$\textbf{\ }
\end{definition}

In \cite[p. 31, just called reproducing]{dynlevinActa} this is called just
reproducing, and in \cite[here "stepwise"]{DHSS08} is defined the stepwise reproduction.

\begin{proposition}
\label{PropRep}Let $S_{0}$ be a subdivision scheme with symbols $a^{[k]}(z)$
for $k\in\mathbb{N}_{0},$ let $\lambda\in\mathbb{C},$ and $g\left(  x\right)
$ be a polynomial. Define $z^{\left[  k\right]  }:=\exp\left(  -2^{-(k+1)}%
\lambda\right)  $, and for $\delta=0,1$ define
\[
F_{\delta}^{\left[  k\right]  }\left(  g,x\right)  :=\sum_{l\in\mathbb{Z}%
}a_{\delta-2l}^{\left[  k\right]  }g\left(  \frac{l+x}{2^{k}}\right)  \left(
z^{\left[  k\right]  }\right)  ^{\delta-2l}%
\]
for $\delta=0,1.$

Then for fixed $k\in\mathbb{N}_{0}$ the following statements are equivalent :

a) $g\left(  x\right)  e^{\lambda x}$ is reproduced at level $k.$

b) $F_{\delta}^{\left[  k\right]  }\left(  g,m\right)  =g\left(
\frac{2m+\delta}{2^{k+1}}\right)  $ for $\delta=0,1$ and for all
$m\in\mathbb{Z}$.

c) $F_{\delta}^{\left[  k\right]  }\left(  g,x\right)  =g\left(
\frac{2x+\delta}{2^{k+1}}\right)  $ for $\delta=0,1$ and for all
$x\in\mathbb{R}$.
\end{proposition}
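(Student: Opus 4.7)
The plan is to prove $(a)\Leftrightarrow(b)$ by direct manipulation of the reproduction identity \eqref{eqReprod}, and then $(b)\Leftrightarrow(c)$ by a polynomial identity argument.

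For $(a)\Leftrightarrow(b)$, I would write every $j\in\mathbb{Z}$ uniquely as $j=2m+\delta$ with $m\in\mathbb{Z}$ and $\delta\in\{0,1\}$, and substitute $f(x)=g(x)e^{\lambda x}$ into \eqref{eqReprod}. The left-hand side becomes $g\bigl((2m+\delta)/2^{k+1}\bigr)\,e^{\lambda(2m+\delta)/2^{k+1}}$. On the right-hand side I would reindex $l\mapsto l+m$ (legal because the mask has finite support) so that the index on $a^{[k]}$ becomes $\delta-2l$, yielding
\[
\sum_{l\in\mathbb{Z}} a_{\delta-2l}^{[k]}\, g\!\left(\frac{l+m}{2^k}\right) e^{\lambda(l+m)/2^k}.
\]
Pulling out $e^{\lambda m/2^k}=e^{\lambda\cdot 2m/2^{k+1}}$ cancels part of the exponential on the left, leaving $e^{\lambda\delta/2^{k+1}}=(z^{[k]})^{-\delta}$ on the left and $e^{\lambda l/2^k}=(z^{[k]})^{-2l}$ inside the sum on the right. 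Multiplying through by $(z^{[k]})^{\delta}$ converts the right-hand side into the defining expression for $F_{\delta}^{[k]}(g,m)$, while the left-hand side becomes $g\bigl((2m+\delta)/2^{k+1}\bigr)$. This is exactly condition (b), and every step is reversible.

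For $(b)\Leftrightarrow(c)$, the implication $(c)\Rightarrow(b)$ is trivial by specializing $x=m$. For $(b)\Rightarrow(c)$ I would observe that $F_{\delta}^{[k]}(g,x)$ is a \emph{finite} linear combination of the translates $g\bigl((l+x)/2^k\bigr)$ weighted by constants (the $a_{\delta-2l}^{[k]}(z^{[k]})^{\delta-2l}$ do not depend on $x$), hence a polynomial in $x$. Likewise $g\bigl((2x+\delta)/2^{k+1}\bigr)$ is a polynomial in $x$. Two polynomials agreeing on all of $\mathbb{Z}$ must coincide on $\mathbb{R}$, giving (c).

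The proof is essentially bookkeeping; there is no serious obstacle. The only point requiring care is tracking the exponential factors and verifying that the change of summation index exactly produces the prefactor $(z^{[k]})^{\delta-2l}$ built into the definition of $F_{\delta}^{[k]}$. The finite support of the mask (so that reindexing is valid) and the polynomial nature of $g$ (so that finite sums of translates remain polynomials) are the two structural facts being used.
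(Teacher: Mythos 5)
Your proof is correct and follows essentially the same route as the paper: substitute $f(x)=g(x)e^{\lambda x}$ into (\ref{eqReprod}) with $j=2m+\delta$, reindex the sum by $m$ and absorb the exponential factors into powers of $z^{[k]}$ to obtain $F_{\delta}^{[k]}(g,m)$, and then pass from $\mathbb{Z}$ to $\mathbb{R}$ by noting that both sides of (b) are polynomials in $x$. No gaps.
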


\begin{proof}
Using (\ref{eqReprod}) for $j=2m+\delta$ with $m\in\mathbb{Z}$ and
$\delta=0,1,$ it is easy to see that the function $g\left(  x\right)
e^{\lambda x}$ is reproduced stepwise if and only if for $\delta=0,1$ and for
all $m\in\mathbb{Z}$
\[
g\left(  \frac{2m+\delta}{2^{k+1}}\right)  =\sum_{l\in\mathbb{Z}}%
a_{2m+\delta-2l}^{\left[  k\right]  }g\left(  \frac{l}{2^{k}}\right)  \left(
z^{\left[  k\right]  }\right)  ^{2m+\delta-2l}.
\]
Using the variable transformation $\widetilde{l}=l+m$ it is easy to see that
the right hand side is equal to $F_{\delta}\left(  g,m\right)  ,$ hence the
equivalence of $a)$ and $b)$ is proven. The equivalence of $b)$ and $c)$
follows from the fact that $g\left(  x\right)  $ and $F_{\delta}\left(
g,x\right)  $ are polynomials.
\end{proof}

Theorem \ref{ThmZero} below extends Theorem 2.3 in \cite{dynlevinexp} which
was formulated only for interpolatory schemes. (We will apply this later to
Daubechies schemes which are not interpolatory)\textbf{.}

\begin{theorem}
\label{ThmZero}Let $S_{0}$ be a subdivision scheme with masks $a_{j}^{\left[
k\right]  },j\in\mathbb{Z}$, $k\in\mathbb{N}_{0}$. Then for $r=0,..,\mu-1$ the
functions $f_{r}\left(  x\right)  =x^{r}e^{\lambda x}$ are reproduced stepwise
by $S_{0}$ if and only if for all $k\in\mathbb{N}_{0}$ holds
\begin{equation}
a^{[k]}\left(  -\exp\left(  -2^{-(k+1)}\lambda\right)  \right)  =0\text{ and
}a^{[k]}\left(  \exp\left(  -2^{-(k+1)}\lambda\right)  \right)  =2
\label{eqaaa1}%
\end{equation}
and
\begin{equation}
{\frac{d^{r}}{dz^{r}}}a^{[k]}\left(  \pm\exp\left(  -2^{-(k+1)}\lambda\right)
\right)  =0,\ \ r=1,...,\mu-1. \label{eqaaa2}%
\end{equation}

\end{theorem}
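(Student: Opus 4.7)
The plan is to obtain the symbol conditions directly from Proposition \ref{PropRep}(c). By linearity, stepwise reproduction of the family $\{x^{r}e^{\lambda x}\}_{r=0}^{\mu-1}$ is equivalent to stepwise reproduction of $g(x)e^{\lambda x}$ for every polynomial $g$ with $\deg g\le \mu-1$. By Proposition \ref{PropRep}, this amounts to the identity
\[
F_{\delta}^{[k]}(g,x)=g\!\left(\frac{2x+\delta}{2^{k+1}}\right),\qquad \delta=0,1,\ x\in\mathbb{R},\ k\in\mathbb{N}_{0},
\]
holding for each such $g$. So I would first rewrite the left-hand side in a form that separates out the coefficients of $g$.

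For this, I would substitute $n:=\delta-2l$ in the defining sum for $F_{\delta}^{[k]}$, so the sum runs over integers $n\equiv\delta\pmod 2$, and use the identity $(l+x)/2^{k}=y_{0}-n/2^{k+1}$ with $y_{0}:=(2x+\delta)/2^{k+1}$. Taylor-expanding $g$ about $y_{0}$ (valid since $\deg g\le\mu-1$) gives
\[
F_{\delta}^{[k]}(g,x)=\sum_{r=0}^{\mu-1}\frac{(-1)^{r}g^{(r)}(y_{0})}{r!\,2^{r(k+1)}}\,T_{\delta}^{[k],r},\qquad T_{\delta}^{[k],r}:=\sum_{n\equiv\delta\,(2)}n^{r}a_{n}^{[k]}(z^{[k]})^{n}.
\]
Since $g$ is an arbitrary polynomial of degree $\le\mu-1$, the vector $(g(y_{0}),g'(y_{0}),\dots,g^{(\mu-1)}(y_{0}))$ exhausts $\mathbb{C}^{\mu}$, so requiring $F_{\delta}^{[k]}(g,x)=g(y_{0})$ for all such $g$ is equivalent to $T_{\delta}^{[k],0}=1$ and $T_{\delta}^{[k],r}=0$ for $r=1,\dots,\mu-1$, for both $\delta=0,1$.

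Next I would translate these moment conditions into conditions on $a^{[k]}$ at $\pm z^{[k]}$ by means of the Euler operator $D:=z\,d/dz$. Writing $(D^{r}a^{[k]})(\pm z^{[k]})=\sum_{n}n^{r}a_{n}^{[k]}(\pm 1)^{n}(z^{[k]})^{n}$, one has
\[
T_{0}^{[k],r}=\tfrac{1}{2}\bigl[(D^{r}a^{[k]})(z^{[k]})+(D^{r}a^{[k]})(-z^{[k]})\bigr],\quad T_{1}^{[k],r}=\tfrac{1}{2}\bigl[(D^{r}a^{[k]})(z^{[k]})-(D^{r}a^{[k]})(-z^{[k]})\bigr].
\]
Hence the conditions on $T_{\delta}^{[k],r}$ for $\delta=0,1$ are equivalent to $(D^{r}a^{[k]})(\pm z^{[k]})=0$ for $r=1,\dots,\mu-1$, together with $a^{[k]}(z^{[k]})=2$ and $a^{[k]}(-z^{[k]})=0$, which is \eqref{eqaaa1}.

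The final step, which I expect to be the main technical point, is to pass from the Euler operator to ordinary derivatives. Since $D^{r}=\sum_{j=1}^{r}S(r,j)\,z^{j}\,d^{j}/dz^{j}$ (Stirling numbers of the second kind) with $S(r,r)=1$, the linear system sending the vector $\bigl((z^{[k]})^{j}(d^{j}/dz^{j})a^{[k]}(\pm z^{[k]})\bigr)_{j=1}^{\mu-1}$ to $\bigl((D^{r}a^{[k]})(\pm z^{[k]})\bigr)_{r=1}^{\mu-1}$ is upper-triangular with unit diagonal, and $z^{[k]}=\exp(-2^{-(k+1)}\lambda)\neq 0$. Therefore $(D^{r}a^{[k]})(\pm z^{[k]})=0$ for $r=1,\dots,\mu-1$ is equivalent to $(d^{r}/dz^{r})a^{[k]}(\pm z^{[k]})=0$ for the same range of $r$, which is exactly \eqref{eqaaa2}. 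Beyond this change of basis from Euler to ordinary derivatives, everything reduces to routine Taylor expansion and the linear-independence observation used to extract coefficients.
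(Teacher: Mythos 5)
Your proof is correct. Both you and the paper reduce the statement to Proposition \ref{PropRep} and then test $F_{\delta}^{[k]}$ against a basis of polynomials of degree $\leq\mu-1$, but the execution differs in a genuine way. The paper chooses the falling-factorial polynomials $g_{\delta}^{s}(x)=(\delta-2^{k+1}x)\cdots(\delta-2^{k+1}x-(s-1))$, whose values at $l/2^{k}$ produce exactly the coefficients $j(j-1)\cdots(j-s+1)$ of the ordinary derivatives of the symbol, so necessity of (\ref{eqaaa1})--(\ref{eqaaa2}) drops out immediately; the price is that sufficiency then requires a separate induction on the degree, with auxiliary polynomials absorbing the lower-order terms. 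You instead Taylor-expand a general $g$ about $y_{0}=(2x+\delta)/2^{k+1}$, which packages the whole condition as a nonsingular (diagonal, in fact) linear system in the moments $T_{\delta}^{[k],r}$, so both directions come out simultaneously; the price is that the natural derivative appearing is the Euler derivative $D=z\,d/dz$, and you need the unit-triangular Stirling-number change of basis (valid for Laurent polynomials since $n^{r}=\sum_{j}S(r,j)\,n(n-1)\cdots(n-j+1)$ is a polynomial identity in $n$, and $z^{[k]}\neq0$) to land on the ordinary derivatives in (\ref{eqaaa2}). Incidentally, your bookkeeping is slightly more careful than the paper's: the identities (\ref{eqR1})--(\ref{eqR2}) there are really correct only up to the nonzero factors $(\pm z^{[k]})^{s}$, which is harmless for the vanishing conditions but is made explicit in your version.
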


\begin{proof}
Put $z^{\left[  k\right]  }:=\exp\left(  -2^{-(k+1)}\lambda\right)  .$ Since
$F_{\delta}^{\left[  k\right]  }\left(  g_{\delta},0\right)  =\sum
_{l\in\mathbb{Z}}a_{\delta-2l}^{\left[  k\right]  }g_{\delta}\left(  \frac
{l}{2^{k}}\right)  \left(  z^{\left[  k\right]  }\right)  ^{\delta-2l},$ we
first observe the following identities for the constant function $g=1:$
\begin{align}
a^{\left[  k\right]  }\left(  z^{\left[  k\right]  }\right)   &
=F_{0}^{\left[  k\right]  }\left(  1,0\right)  +F_{1}^{\left[  k\right]
}\left(  1,0\right)  ,\label{eqId1}\\
a^{\left[  k\right]  }\left(  -z^{\left[  k\right]  }\right)   &
=F_{0}^{\left[  k\right]  }\left(  1,0\right)  -F_{1}^{\left[  k\right]
}\left(  1,0\right)  . \label{eqId2}%
\end{align}
Suppose $S_{0}$ reproduces the function $f\left(  x\right)  =g\left(
x\right)  e^{\lambda x}$ where $g\left(  x\right)  $ is a polynomial of degree
$\leq\mu-1.$ We obtain from condition $b)$ in Proposition \ref{PropRep} for
$g=1$ that $1=F_{\delta}^{\left[  k\right]  }\left(  1,0\right)  $ for
$\delta=0,1,$ and we conclude from (\ref{eqId1}) and (\ref{eqId2}) that
$a^{\left[  k\right]  }\left(  z^{\left[  k\right]  }\right)  =2$ and
$a^{\left[  k\right]  }\left(  -z^{\left[  k\right]  }\right)  =0.$ Next we
take%
\begin{equation}
g_{\delta}^{s}\left(  x\right)  =\left(  \delta-2^{k+1}x\right)  \left(
\delta-2^{k+1}x-1\right)  ...\left(  \delta-2^{k+1}x-\left(  s-1\right)
\right)  \label{defgg}%
\end{equation}
for $s=1,...,\mu-1.$ Then%
\begin{equation}
g_{\delta}^{s}\left(  \frac{l}{2^{k}}\right)  =\left(  \delta-2l\right)
\left(  \delta-2l-1\right)  ...\left(  \delta-2l-\left(  s-1\right)  \right)
. \label{eq+++}%
\end{equation}
Since $F_{\delta}^{\left[  k\right]  }\left(  g_{\delta},0\right)  =\sum
_{l\in\mathbb{Z}}a_{\delta-2l}^{\left[  k\right]  }g_{\delta}\left(  \frac
{l}{2^{k}}\right)  \left(  z^{\left[  k\right]  }\right)  ^{\delta-2l}$ it
follows from (\ref{eq+++}) that
\begin{align}
\frac{d^{s}}{dz^{s}}a^{\left[  k\right]  }\left(  z^{\left[  k\right]
}\right)   &  =F_{0}^{\left[  k\right]  }\left(  g_{0}^{s},0\right)
+F_{1}^{\left[  k\right]  }\left(  g_{1}^{s},0\right)  ,\label{eqR1}\\
\frac{d^{s}}{dz^{s}}a^{\left[  k\right]  }\left(  -z^{\left[  k\right]
}\right)   &  =F_{0}^{\left[  k\right]  }\left(  g_{0}^{s},0\right)
-F_{1}^{\left[  k\right]  }\left(  g_{1}^{s},0\right)  . \label{eqR2}%
\end{align}
Since $g_{\delta}^{s}\left(  x\right)  e^{\lambda x}$ is reproduced, we obtain
that $F_{\delta}^{\left[  k\right]  }\left(  g_{\delta}^{s},0\right)
=g_{\delta}\left(  \frac{\delta}{2^{k+1}}\right)  =0$ for $\delta=0,1.$ Thus
(\ref{eqaaa2}) holds and the necessity part is proved.

The converse is proved by induction over the dimension $s$ of the linear space
$E_{s}$ of all polynomials $g$ of degree $\leq s.$ By Proposition
\ref{PropRep} we have to prove that for each $g\in E_{s}$ holds
\[
g\left(  \frac{2m+\delta}{2^{k+1}}\right)  =F_{\delta}^{\left[  k\right]
}\left(  g,m\right)  \text{ for }\delta=0,1\text{ and for all }m\in
\mathbb{Z}.
\]
For $s=0$ this means we have to prove that $F_{\delta}^{\left[  k\right]
}\left(  1,m\right)  =1$ for the constant function $g=1,$ for $\delta=0,1,$
and for all $m\in\mathbb{Z}.$ Since $F_{\delta}^{\left[  k\right]  }\left(
g,x\right)  $ is the constant polynomial we have only to show that $F_{\delta
}^{\left[  k\right]  }\left(  1,0\right)  =1.$ By assumption (\ref{eqaaa2})
and equations (\ref{eqId1}) and (\ref{eqId2}) we infer $2=a^{\left[  k\right]
}\left(  z^{\left[  k\right]  }\right)  =F_{0}^{\left[  k\right]  }\left(
1,0\right)  +F_{1}^{\left[  k\right]  }\left(  1,0\right)  $ and $0=a^{\left[
k\right]  }\left(  z^{\left[  k\right]  }\right)  =F_{0}^{\left[  k\right]
}\left(  1,0\right)  -F_{1}^{\left[  k\right]  }\left(  1,0\right)  ,$ from
which we infer the desired statement $F_{\delta}^{\left[  k\right]  }\left(
1,0\right)  =1$ for $\delta=0,1$.

Suppose that the statement is proven for $E_{s-1}.$ We consider the polynomial
$g_{\delta}^{s}$ defined in (\ref{defgg}) for $\delta=0,1.$ Note that equation
(\ref{eqR1}) and (\ref{eqR2}), and the assumption (\ref{eqaaa2}) imply that
\begin{equation}
F_{\delta}^{\left[  k\right]  }\left(  g_{\delta}^{s},0\right)  =0=g_{\delta
}^{s}\left(  \frac{\delta}{2^{k+1}}\right)  \label{eqfnull}%
\end{equation}
for all $s=1,...,\mu-1.$ Put now $g^{s}:=g_{0}^{\delta}.$ Then by
(\ref{eqfnull}) it follows $F_{0}^{\left[  k\right]  }\left(  g^{s},0\right)
=g^{s}\left(  0\right)  $ . Note that
\[
g_{1}^{s}\left(  x\right)  =g_{0}^{s}\left(  x\right)  +G^{s-1}\left(
x\right)
\]
for some polynomial $G^{s-1}$ of degree $\leq s-1.$ Then
\[
F_{1}^{\left[  k\right]  }\left(  g_{0}^{s},0\right)  =F_{1}^{\left[
k\right]  }\left(  g_{1}^{s},0\right)  -F_{1}^{\left[  k\right]  }\left(
G^{s-1},0\right)  =-G^{s-1}\left(  \frac{1}{2^{k+1}}\right)  =g_{0}^{s}\left(
\frac{1}{2^{k+1}}\right)  ,
\]
since $F_{1}^{\left[  k\right]  }\left(  g_{1}^{s},0\right)  =0$ by
(\ref{eqfnull}), and $F_{1}^{\left[  k\right]  }\left(  G^{s-1},0\right)
=G^{s-1}\left(  \frac{1}{2^{k+1}}\right)  $ by the induction hypothesis.
Hence
\[
F_{\delta}^{\left[  k\right]  }\left(  g^{s},0\right)  =g^{s}\left(
\frac{\delta}{2^{k+1}}\right)  .
\]
If we prove that $F_{\delta}^{\left[  k\right]  }\left(  g^{s},m\right)
=g^{s}\left(  \frac{2m+\delta}{2^{k+1}}\right)  $ for all $m\in\mathbb{Z}$ and
$\delta=0,1,$ then we can conclude that $g^{s}$ is reproduced by the scheme
and clearly this implies that $E_{s}$ is reproduced. Note that for each
$m\in\mathbb{Z}$ there exists a polynomial $h_{m}^{s}$ of degree $<s$ such
that
\[
g^{s}\left(  x+\frac{m}{2^{k}}\right)  =g^{s}\left(  x\right)  +h_{m}%
^{s}\left(  x\right)  .
\]
Then $g^{s}\left(  \frac{l}{2^{k}}+\frac{m}{2^{k}}\right)  =g^{s}\left(
\frac{l}{2^{k}}\right)  +h_{m}^{s}\left(  \frac{l}{2^{k}}\right)  $ and
\[
F_{\delta}^{\left[  k\right]  }\left(  g^{s},m\right)  =\sum_{l\in\mathbb{Z}%
}a_{\delta-2l}^{\left[  k\right]  }g^{s}\left(  \frac{l+m}{2^{k}}\right)
\left(  z^{\left[  k\right]  }\right)  ^{\delta-2l}=F_{\delta}^{\left[
k\right]  }\left(  g^{s},0\right)  +F_{\delta}^{\left[  k\right]  }\left(
h_{m}^{s},0\right)  .
\]
By the induction hypothesis we know that $F_{\delta}^{\left[  k\right]
}\left(  h_{m}^{s},0\right)  =h_{m}^{s}\left(  \frac{\delta}{2^{k+1}}\right)
.$ It follows from (\ref{eqfnull}) that
\[
F_{\delta}^{\left[  k\right]  }\left(  g^{s},m\right)  =g^{s}\left(
\frac{\delta}{2^{k+1}}\right)  +h_{m}^{s}\left(  \frac{\delta}{2^{k+1}%
}\right)  =g^{s}\left(  \frac{\delta+2m}{2^{k+1}}\right)  .
\]

The proof is complete.
\end{proof}

It is easy to see that a function $f:\mathbb{R}\rightarrow\mathbb{C}$ is
reproduced stepwise by a subdivision scheme if and only if for the data
function $f^{\left[  0\right]  }\left(  j\right)  :=f\left(  j\right)  $ one
has
\[
f^{\left[  k\right]  }\left(  \frac{j}{2^{k}}\right)  =f\left(  \frac{j}%
{2^{k}}\right)  .
\]

From this it is easy to see that a convergent and stepwise reproducing
subdivision scheme is reproducing in the following sense:

\begin{definition}
A convergent subdivision scheme $S_{0}$ with masks $a_{j}^{\left[  k\right]
},j\in\mathbb{Z}$, $k\in\mathbb{N}_{0},$ is reproducing a continuous function
$f:\mathbb{R}\rightarrow\mathbb{C}$ if the limit function for the data
function $f\left(  j\right)  ,$ $j\in\mathbb{Z},$ is equal to $f\left(
x\right)  .$
\end{definition}

\begin{remark}
In the definition of a convergent scheme some authors require bounded data.
\end{remark}

An important concept for the investigation of non-stationary subdivision
schemes is the following notion introduced in \cite{dynLevinJMAA95}:

\begin{definition}
Two subdivision schemes $S_{a}$ and $S_{b}$ with masks $a_{j}^{\left[
k\right]  },j\in\mathbb{Z}$, $k\in\mathbb{N}_{0},$ and $b_{j}^{\left[
k\right]  },j\in\mathbb{Z}$, $k\in\mathbb{N}_{0},$resp. are called
\textbf{asymptotically equivalent} if%
\begin{equation}
\sum_{k=0}^{\infty}\sum_{j\in\mathbb{Z}}\left\vert a_{j}^{\left[  k\right]
}-b_{j}^{\left[  k\right]  }\right\vert <\infty. \label{eqAE}%
\end{equation}
We say that $S_{a}$ and $S_{b}$ are \textbf{exponentially asymptotically
equivalent} if there exists a constant $C>0$ such that
\begin{equation}
\max_{j\in\mathbb{Z}}\left\vert a_{j}^{\left[  k\right]  }-b_{j}^{\left[
k\right]  }\right\vert \leq C\cdot2^{-k} \label{eqAE2}%
\end{equation}
for all $k\in\mathbb{N}_{0}.$
\end{definition}

We also say that the masks $a^{\left[  k\right]  }$, $k\in\mathbb{N}_{0},$ and
$b^{\left[  k\right]  },$ $k\in\mathbb{N}_{0},$ are (exponentially resp.)
asymptotically equivalent if (\ref{eqAE}) ((\ref{eqAE2}) resp.) holds.

Suppose that the masks $a_{j}^{\left[  k\right]  },j\in\mathbb{Z}$,
$k\in\mathbb{N}_{0},$ and $b_{j}^{\left[  k\right]  },j\in\mathbb{Z}$,
$k\in\mathbb{N}_{0},$ have support in the set $\left\{  -N,...,N\right\}  ,$
i.e. $a_{j}^{\left[  k\right]  }=0$ for $\left\vert j\right\vert >N.$ Then it
is easy to see that $a^{\left[  k\right]  }$, $k\in\mathbb{N}_{0},$ and
$b^{\left[  k\right]  },$ $k\in\mathbb{N}_{0},$ are exponentially
asymptotically equivalent if and only if for any $R>1$ there exists $D>0$ such
that
\[
\left\vert a^{\left[  k\right]  }\left(  z\right)  -b^{\left[  k\right]
}\left(  z\right)  \right\vert \leq D\cdot2^{-k}%
\]
for all $k\in\mathbb{N}_{0}$ and for all $z\in\mathbb{C}$ with $1/R\leq
\left\vert z\right\vert \leq R.$

The following result provides a sufficient method for constructing
asymptotically equivalent subdivision schemes.

\begin{theorem}
\label{ThmSuff}Let $m\in\mathbb{N}_{0}$ and assume that $p^{\left[  k\right]
}\left(  z\right)  $ and $p\left(  z\right)  $ are polynomials of degree $m$
for each $k\in\mathbb{N}_{0}$ defined as
\[
p^{\left[  k\right]  }\left(  z\right)  =c^{\left[  k\right]  }\prod
\limits_{j=1}^{m}\left(  z-\alpha_{j}^{\left[  k\right]  }\right)  \text{ and
}p\left(  z\right)  =c\prod\limits_{j=1}^{m}\left(  z-\alpha_{j}\right)
\]
for some complex numbers $c^{\left[  k\right]  },c,$ and $\alpha_{j}^{\left[
k\right]  }$ and $\alpha_{j},$ for $j=1...,m,$ and $k\in\mathbb{N}_{0}.$
Suppose that there exists a constant $D_{m}>0$ such that for all
$k\in\mathbb{N}_{0}$ and $j=1,...,m$
\begin{equation}
\left\vert \alpha_{j}^{\left[  k\right]  }-\alpha_{j}\right\vert \leq
D_{m}2^{-k}\text{ and }\left\vert c^{\left[  k\right]  }-c\right\vert \leq
D_{m}2^{-k}. \label{eqexp}%
\end{equation}
Then $p^{\left[  k\right]  }\left(  z\right)  ,k\in\mathbb{N}_{0}$, and
$p\left(  z\right)  ,$ $k\in\mathbb{N}_{0},$ are exponentially asymptotically equivalent.
\end{theorem}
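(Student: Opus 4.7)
The plan is to bound the difference $p^{[k]}(z) - p(z)$ uniformly on any annulus $1/R \leq |z| \leq R$ by a constant times $2^{-k}$, and then translate this into a coefficient estimate using the criterion mentioned just before the theorem statement, namely that for polynomials with support in a fixed set of indices, exponential asymptotic equivalence of the coefficient sequences is equivalent to such a uniform bound of the symbols on any annulus.

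First I would fix an $R > 1$ and work on the compact set $K_R = \{z \in \mathbb{C} : 1/R \leq |z| \leq R\}$. From $|\alpha_j^{[k]} - \alpha_j| \leq D_m 2^{-k}$ and $|c^{[k]} - c| \leq D_m 2^{-k}$, the quantities $|\alpha_j^{[k]}|$ and $|c^{[k]}|$ are bounded uniformly in $k$ by constants depending only on the $\alpha_j$, $c$ and $D_m$. In particular each factor $|z - \alpha_j^{[k]}|$ is bounded on $K_R$ by some constant $M$ independent of $k$ and $j$.

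Next I would split the difference by adding and subtracting an intermediate product:
\[
p^{[k]}(z) - p(z) = (c^{[k]} - c)\prod_{j=1}^{m}(z - \alpha_j^{[k]}) + c\left[\prod_{j=1}^{m}(z - \alpha_j^{[k]}) - \prod_{j=1}^{m}(z - \alpha_j)\right].
\]
The first term is immediately bounded on $K_R$ by $D_m 2^{-k} \cdot M^m$. For the second term I would use the standard telescoping identity
\[
\prod_{j=1}^{m}(z - \alpha_j^{[k]}) - \prod_{j=1}^{m}(z - \alpha_j) = \sum_{i=1}^{m} \left(\prod_{j<i}(z - \alpha_j^{[k]})\right)(\alpha_i - \alpha_i^{[k]})\left(\prod_{j>i}(z - \alpha_j)\right),
\]
each summand of which is bounded on $K_R$ by $M^{m-1} \cdot D_m 2^{-k}$. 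Combining the two bounds gives a constant $D_R > 0$, depending only on $m$, $D_m$, $c$, the roots $\alpha_j$ and $R$, such that
\[
|p^{[k]}(z) - p(z)| \leq D_R \cdot 2^{-k} \quad \text{for all } z \in K_R \text{ and all } k \in \mathbb{N}_0.
\]

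Finally I would pass from the symbol estimate back to the coefficients. Since both $p^{[k]}$ and $p$ have degree $\leq m$, their difference is a polynomial with coefficients supported in $\{0,\dots,m\}$, and by the equivalence remarked on just before the theorem (or directly via the Cauchy coefficient formula $|c_j| \leq \max_{|z|=1}|q(z)|$ applied on $R = 1$), the uniform bound on $K_R$ forces each coefficient of $p^{[k]} - p$ to be of order $2^{-k}$, which is precisely the exponential asymptotic equivalence required in \eqref{eqAE2}. The only slightly delicate point is ensuring that the intermediate bounds $M$ and $D_R$ are genuinely uniform in $k$, but that follows at once from the fact that the assumption \eqref{eqexp} makes the roots and leading coefficients lie in a bounded set independent of $k$.
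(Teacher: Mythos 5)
Your proof is correct and follows essentially the same route as the paper: the paper establishes the uniform bound $|p^{[k]}(z)-p(z)|\leq C_m(R)2^{-k}$ on $|z|\leq R$ by induction on the number of factors, peeling off one root at a time, which is exactly the unrolled form of your telescoping identity, and then both arguments pass to the coefficient estimate via the equivalence stated just before the theorem. No gaps.
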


\begin{proof}
We claim by induction over $m\in\mathbb{N}_{0}$ that for each $R>0$ there
exists a constant $C_{m}\left(  R\right)  $ such that for all $k\in
\mathbb{N}_{0}$ and $\left\vert z\right\vert \leq R$
\begin{equation}
\left\vert p^{\left[  k\right]  }\left(  z\right)  -p\left(  z\right)
\right\vert \leq C_{m}\left(  r\right)  2^{-k}. \label{eqpexp}%
\end{equation}
For $m=0$ the statement follows directly from (\ref{eqexp}). Suppose that the
statement is true for $m-1.$ Write%
\[
p^{\left[  k\right]  }\left(  z\right)  =c^{\left[  k\right]  }\left(
z-\alpha_{m}^{\left[  k\right]  }\right)  p_{m-1}^{\left[  k\right]  }\left(
z\right)  \text{ and }p\left(  z\right)  =c\left(  z-\alpha_{m}\right)
p_{m-1}\left(  z\right)  ,
\]
where $p_{m-1}^{\left[  k\right]  }\left(  z\right)  $ and $p_{m-1}\left(
z\right)  $ are polynomials of degree $\leq m-1$ and leading coefficient $1.$
By the induction hypothesis, for each $R>0$ a constant $C_{m}\left(  R\right)
$ such that $\left\vert c^{\left[  k\right]  }p_{m-1}^{\left[  k\right]
}\left(  z\right)  -cp_{m-1}\left(  z\right)  \right\vert \leq C_{m}\left(
r\right)  2^{-k}$ for all $k\in\mathbb{N}_{0}$ and $\left\vert z\right\vert
\leq R.$ Note that
\begin{align*}
p^{\left[  k\right]  }\left(  z\right)  -p\left(  z\right)   &  =c^{\left[
k\right]  }\left(  z-\alpha_{m}^{\left[  k\right]  }\right)  p_{m-1}^{\left[
k\right]  }\left(  z\right)  -c\left(  z-\alpha_{m}\right)  p_{m-1}\left(
z\right) \\
&  =\left(  z-\alpha_{m}\right)  \left[  c^{\left[  k\right]  }p_{m-1}%
^{\left[  k\right]  }\left(  z\right)  -cp_{m-1}\right]  +c^{\left[  k\right]
}\left(  \alpha_{m}-\alpha_{m}^{\left[  k\right]  }\right)  p_{m-1}^{\left[
k\right]  }\left(  z\right)  .
\end{align*}
Using the triangle inequality, the induction hypothesis, and (\ref{eqexp}),
one obtains (\ref{eqpexp}).
\end{proof}

\begin{theorem}
\label{ThmNec}Assume that $p^{\left[  k\right]  }\left(  z\right)  $ and
$p\left(  z\right)  $ are polynomials of degree $\leq m$ for each
$k\in\mathbb{N}_{0}$ and assume for each $R>0$ there exists a constant
$C_{m}\left(  R\right)  $ such that for all $k\in\mathbb{N}_{0}$ and
$\left\vert z\right\vert \leq R$
\begin{equation}
\left\vert p^{\left[  k\right]  }\left(  z\right)  -p\left(  z\right)
\right\vert \leq C_{m}\left(  R\right)  2^{-k}. \label{eqcmr}%
\end{equation}
If $\alpha$ is a simple zero of $p\left(  z\right)  $ then there exists
$k_{0}\in\mathbb{N}_{0}$ and a constant $\rho>0$ such that for each natural
number $k\geq k_{0}$ there exists a zero $\alpha^{\left[  k\right]  }$ of
$p^{\left[  k\right]  }\left(  z\right)  $ with
\[
\left\vert \alpha^{\left[  k\right]  }-\alpha\right\vert \leq\rho2^{-k}\text{
for all }k\geq k_{0}.
\]

\end{theorem}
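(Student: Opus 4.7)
The natural approach is Rouch\'e's theorem applied on a shrinking circle around $\alpha$ whose radius is proportional to $2^{-k}$.

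First I would exploit that $\alpha$ is a \emph{simple} zero of $p$ by factoring $p(z)=(z-\alpha)q(z)$ with $q(\alpha)\neq 0$. By continuity of $q$, there exist $r_0>0$ and $c_0>0$ such that $|q(z)|\geq c_0$ for every $z$ in the closed disk $\overline{D}(\alpha,r_0):=\{z:|z-\alpha|\leq r_0\}$; in particular $\alpha$ is the only zero of $p$ in this disk.

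Next fix $R>0$ large enough that $\overline{D}(\alpha,r_0)\subset\{|z|\leq R\}$, and let $C:=C_m(R)$ be the constant supplied by the hypothesis (\ref{eqcmr}). Choose any $\rho>C/c_0$, and choose $k_0\in\mathbb{N}_{0}$ so large that $\rho\,2^{-k_0}\leq r_0$. For $k\geq k_0$ consider the circle $\gamma_k:=\{z:|z-\alpha|=\rho\,2^{-k}\}$, which lies in $\overline{D}(\alpha,r_0)$. On $\gamma_k$ one has the lower bound
\[
|p(z)| \;=\; |z-\alpha|\cdot|q(z)| \;\geq\; \rho\,2^{-k}\cdot c_0 \;>\; C\,2^{-k} \;\geq\; |p^{[k]}(z)-p(z)|,
\]
the last inequality being (\ref{eqcmr}).

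Hence $|p^{[k]}(z)-p(z)|<|p(z)|$ on $\gamma_k$, and Rouch\'e's theorem implies that $p$ and $p^{[k]}=p+(p^{[k]}-p)$ have the same number of zeros, counted with multiplicity, inside the open disk $D(\alpha,\rho\,2^{-k})$. Since $p$ has exactly one zero there, namely the simple zero $\alpha$, the polynomial $p^{[k]}$ has exactly one zero $\alpha^{[k]}$ in that disk, which by construction satisfies $|\alpha^{[k]}-\alpha|<\rho\,2^{-k}$ for all $k\geq k_0$. I do not see a real obstacle here; the only mild subtlety is that $p^{[k]}$ is only known to have degree $\leq m$, possibly smaller than $\deg p$, but Rouch\'e's theorem counts zeros inside the contour irrespective of the global degree, so nothing needs to be adjusted.
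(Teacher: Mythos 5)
Your proof is correct and follows essentially the same route as the paper: a Rouch\'e argument on circles of radius $\rho\,2^{-k}$ centred at $\alpha$, with the lower bound $|p|\geq c\,\rho\,2^{-k}$ on the circle coming from the simplicity of the zero (you get it by factoring $p(z)=(z-\alpha)q(z)$ and using continuity of $q$, while the paper expands $p$ in powers of $(z-\alpha)$ and bounds the higher-order terms — a cosmetic difference). The ordering of the choices of $R$, $\rho$ and $k_0$ is consistent, so no adjustment is needed.
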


\begin{proof}
Let $\rho>0$ and define $\gamma_{k}\left(  t\right)  =\alpha+2^{-k}\rho
e^{it}$ for $t\in\left[  0,2\pi\right]  .$ Write $p\left(  z\right)
=\sum_{j=1}^{m}p_{j}\left(  z-\alpha\right)  ^{j}.$ Since $p\left(  z\right)
$ has a simple zero in $\alpha$ we know that $p_{0}=0$ and $p_{1}\neq0.$ We
obtain the estimate
\[
\left\vert p\left(  \gamma_{k}\left(  t\right)  \right)  \right\vert \geq
\rho2^{-k}\left(  \left\vert p_{1}\right\vert -\sum_{j=2}^{m}\left\vert
p_{j}\right\vert \frac{\rho^{j}}{2^{k\left(  j-1\right)  }}\right)  .
\]
Hence for given $\rho>0$ there exists a natural number $k_{\rho}$ such that
for all $k\geq k_{\rho}$%
\[
\left\vert p\left(  \gamma_{k}\left(  t\right)  \right)  \right\vert \geq
\rho2^{-k}\frac{\left\vert p_{1}\right\vert }{2}%
\]
and for all $t\in\left[  0,2\pi\right]  .$ Now take $R>0$ is large enough that
$R>2\left\vert \alpha\right\vert .$ Take $\rho$ such that $\rho\frac
{\left\vert p_{1}\right\vert }{2}>C_{m}\left(  R\right)  $ where $C_{m}\left(
R\right)  $ is the constant in (\ref{eqcmr}). Take $k_{1}\geq k_{\rho}$ large
enough that $\left\vert \gamma_{k}\left(  t\right)  \right\vert \leq R$ for
all $k\geq k_{1}$ and $t\in\left[  0,2\pi\right]  .$ Then
\[
\left\vert p^{\left[  k\right]  }\left(  \gamma_{k}\left(  t\right)  \right)
-p\left(  \gamma_{k}\left(  t\right)  \right)  \right\vert \leq C_{m}\left(
R\right)  2^{-k}<\rho\frac{\left\vert p_{1}\right\vert }{2}2^{-k}<\left\vert
p\left(  \gamma_{k}\left(  t\right)  \right)  \right\vert
\]
for all $k\geq k_{1}$ and $t\in\left[  0,2\pi\right]  .$ By Rouche's theorem
the number of zeros of $p^{\left[  k\right]  }\left(  z\right)  $ in the ball
$\left\vert z-\alpha\right\vert <2^{-k}\rho$ is equal to the number of zeros
of $p\left(  z\right)  $ in that ball. Since $p\left(  \alpha\right)  =0$ it
follows that for each $k\geq k_{1}$ there exists a zero $\alpha^{\left[
k\right]  }$ of $p^{\left[  k\right]  }\left(  z\right)  $ in the ball
$\left\vert z-\alpha\right\vert <2^{-k}\rho,$ hence, $\left\vert
\alpha^{\left[  k\right]  }-\alpha\right\vert <2^{-k}\rho.$
\end{proof}

\section{Subdivision schemes based on exponential interpolation and regularity
of the basic limit function}

Let us first recall some basic facts about the classical $2n$-point
Deslauriers-Dubuc subdivision scheme.

It is defined via interpolation of polynomials of degree $2n-1$, see
\cite{deslaurierDubuc1989}. We shall denote its symbol by $D_{2n}\left(
z\right)  $ which is given by
\begin{equation}
D_{2n}(z)=\sum_{\left\vert j\right\vert \leq2n-1}p_{j}z^{j}.
\label{Dubuc-D-symbol}%
\end{equation}
According to Theorem 6.1 in \cite{deslaurierDubuc1989} the symbol
$D_{2n}\left(  z\right)  $ has the (reproduction) property if and only if
\begin{equation}
{\frac{d^{j}}{dz^{j}}}D_{2n}(-1)=0,\ \text{for }j=0,...,2n-1.
\label{reconstruction}%
\end{equation}
Further the scheme of Deslauriers and Dubuc is interpolatory, i.e.,
$p_{2j}=\delta_{0,j}$, for all $j\in\mathbb{Z}$, or equivalently
\begin{equation}
D_{2n}(z)+D_{2n}(-z)=2,\ \text{for all }\ z\in\mathbb{C}\setminus\{0\}.
\label{interpolation}%
\end{equation}
Together, conditions (\ref{reconstruction}) and (\ref{interpolation})
constitute a linear system which uniquely determines the symbol $D_{2n}(z)$ of
the Deslauriers and Dubuc scheme and it can be written in the form
\begin{equation}
D_{2n}\left(  z\right)  =\left(  \frac{1+z}{2}\right)  ^{2n}b_{D_{2n}}\left(
z\right)  . \label{ab}%
\end{equation}
Let us mention that condition (\ref{reconstruction}) means that polynomials of
degree $\leq2n-1$ are reproduced by the subdivision scheme. The Laurent
polynomial $b_{D_{2n}}\left(  z\right)  $ can be computed explicitly and the
following identity (see e.g. \cite{DynKounchevLevinRender2}),
\begin{equation}
D_{2n}\left(  z\right)  =2\frac{\left(  1+z\right)  ^{n}\left(  1+\frac{1}%
{z}\right)  ^{n}}{2^{2n}}Q_{n-1}\left(  \varphi\left(  z\right)  \right)
\label{eqD2nB}%
\end{equation}
holds for all $z\neq0,$ where $\varphi\left(  z\right)  =\frac{1}{2}-\frac
{1}{4}\left(  z+1/z\right)  $ and $Q_{n-1}\left(  x\right)  $ is the
polynomial of degree $n-1$ given by
\begin{equation}
Q_{n-1}\left(  x\right)  =\sum_{j=0}^{n-1}\binom{n+j-1}{j}x^{j}=\sum
_{j=0}^{n-1}\frac{\left(  n+j-1\right)  !}{j!\left(  n-1\right)  !}x^{j}.
\label{Qn-1}%
\end{equation}
The next result shows that $Q_{n-1}\left(  z\right)  $ and $b_{D_{2n}}\left(
z\right)  $ have only simple zeros.

\begin{proposition}
\label{PropQQQ}The polynomial $Q_{n-1}\left(  x\right)  $ in (\ref{Qn-1})
satisfies the identity
\[
nQ_{n-1}\left(  x\right)  +\left(  x-1\right)  Q_{n-1}^{\prime}\left(
x\right)  =\frac{\left(  2n-1\right)  !}{\left(  n-1\right)  !\left(
n-1\right)  !}x^{n-1}.
\]

\end{proposition}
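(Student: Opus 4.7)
The plan is a direct coefficient comparison, guided by the observation that $Q_{n-1}(x)$ is the $(n-1)$-st partial sum of the binomial series $(1-x)^{-n}=\sum_{j=0}^{\infty}\binom{n+j-1}{j}x^{j}$. This series satisfies the first-order ODE $nf(x)+(x-1)f'(x)=0$, which at the level of coefficients amounts to the recurrence $(n+j)c_{j}=(j+1)c_{j+1}$ for $c_{j}=\binom{n+j-1}{j}$. Consequently I expect the expression $nQ_{n-1}(x)+(x-1)Q_{n-1}'(x)$ to vanish in every degree $<n-1$, with a single surviving term of degree $n-1$ produced by the truncation.

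Concretely I would write $Q_{n-1}'(x)=\sum_{j=0}^{n-2}(j+1)c_{j+1}x^{j}$, expand $(x-1)Q_{n-1}'(x)=\sum_{j=1}^{n-1}jc_{j}x^{j}-\sum_{j=0}^{n-2}(j+1)c_{j+1}x^{j}$, and collect coefficients. For $0\le j\le n-2$ the coefficient of $x^{j}$ in the left-hand side is $(n+j)c_{j}-(j+1)c_{j+1}$, which vanishes by the elementary identity $(n+j)\binom{n+j-1}{j}=(j+1)\binom{n+j}{j+1}$ (both sides equal $(n+j)!/(j!\,(n-1)!)$). For $j=n-1$ the cancelling term is absent because $Q_{n-1}$ has degree $n-1$, and the surviving contribution is $(2n-1)c_{n-1}=(2n-1)\binom{2n-2}{n-1}=\frac{(2n-1)!}{(n-1)!(n-1)!}$, which matches the right-hand side.

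There is no serious obstacle here: the whole identity collapses to the single binomial relation above, combined with the observation that at the top degree the derivative cannot shift weight any higher. If one prefers a more conceptual presentation, the same computation can be packaged as follows: subtract the truncated expression $nQ_{n-1}+(x-1)Q_{n-1}'$ from the identically vanishing $nf+(x-1)f'$ to obtain $nQ_{n-1}+(x-1)Q_{n-1}'=-nR-(x-1)R'$ for the tail $R(x)=\sum_{j\ge n}c_{j}x^{j}$; all coefficients of the tail of degree $\ge n$ cancel by the same recurrence $(n+j)c_{j}=(j+1)c_{j+1}$, leaving precisely $nc_{n}x^{n-1}=\frac{(2n-1)!}{(n-1)!(n-1)!}\,x^{n-1}$, which is the claimed right-hand side.
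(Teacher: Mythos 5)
Your proof is correct and follows essentially the same route as the paper's: expand $nQ_{n-1}+(x-1)Q_{n-1}'$ and cancel coefficients degree by degree via the relation $(n+j)\binom{n+j-1}{j}=(j+1)\binom{n+j}{j+1}$, leaving only the top-degree term. The added observation that $Q_{n-1}$ is a partial sum of $(1-x)^{-n}$, which satisfies the homogeneous equation $nf+(x-1)f'=0$, is a nice conceptual gloss but the underlying computation is identical to the one in the paper.
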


\begin{proof}
Clearly $Q_{n-1}^{\prime}\left(  x\right)  =\sum_{j=0}^{n-2}\frac{\left(
n+j\right)  !}{j!\left(  n-1\right)  !}x^{j}.$ Then $nQ_{n-1}\left(  x\right)
+\left(  x-1\right)  Q_{n-1}^{\prime}\left(  x\right)  $ is equal to
\[
n\sum_{j=0}^{n-1}\frac{\left(  n+j-1\right)  !}{j!\left(  n-1\right)  !}%
x^{j}+\sum_{j=1}^{n-1}\frac{\left(  n+j-1\right)  !}{\left(  j-1\right)
!\left(  n-1\right)  !}x^{j}-\sum_{j=0}^{n-2}\frac{\left(  n+j\right)
!}{j!\left(  n-1\right)  !}x^{j}.
\]
from which the statement follows.
\end{proof}

Now we turn to subdivision schemes for exponential polynomials. Let $L$ be the
linear differential operator given by
\begin{equation}
L=\left(  \frac{d}{dx}-\lambda_{0}\right)  ....\left(  \frac{d}{dx}%
-\lambda_{n}\right)  . \label{eqdefL}%
\end{equation}
Complex-valued solutions $f$ of the equation $Lf=0$ are called $L$%
-\emph{polynomials} or \emph{\ exponential polynomials} or just
\emph{exponentials}. We shall denote the set of all solutions of $Lf=0$ by
$E\left(  \lambda_{0}...,\lambda_{n}\right)  .$

Throughout this article we shall assume that $\lambda_{0},...,\lambda_{n}$ are
real numbers. This assumption implies that $E\left(  \lambda_{0}%
...,\lambda_{n}\right)  $ is an \textbf{extended Chebyshev space}, in
particular for any pairwise distinct points $t_{0},...,t_{n}$ and data values
$y_{0},...,y_{n},$ there exists a unique element $p\in E\left(  \lambda
_{0}...,\lambda_{n}\right)  $ with $p\left(  t_{j}\right)  =f_{j}\left(
t_{j}\right)  $ for $j=0,...,n,$ i.e. $p$ is interpolating exponential
polynomial, cf. \cite{kreinNudelman}.

Given real numbers $\lambda_{0},...,\lambda_{2n-1}$ one can define the
\emph{subdivision scheme based on interpolation in }$E\left(  \lambda
_{0}...,\lambda_{2n-1}\right)  $: the new value $f^{k+1}\left(  j/2^{k+1}%
\right)  $ is computed by constructing the unique function $p_{j}\in E\left(
\lambda_{0}...,\lambda_{2n-1}\right)  $ interpolating the previous data
$f^{k}\left(  \left(  j+l\right)  /2^{k}\right)  $ for $l=-n+1,...,n,$ and
putting $f^{k+1}\left(  j/2^{k+1}\right)  =p_{j}\left(  j/2^{k+1}\right)  $
(see \cite{micchelliWAnonstationary} for details). Then the symbols of this
scheme are of the form
\begin{equation}
a^{[k]}(z)=\sum_{|j|\leq2n-1}a_{j}^{[k]}z^{j} \label{DDexp}%
\end{equation}
and since the scheme is interpolatory one has
\begin{equation}
a^{[k]}(z)+a^{[k]}(-z)=2,\ \ z\in\mathbb{C}\setminus\{0\}.
\label{NSinterpolation}%
\end{equation}
Due to the interpolatory definition of the subdivision it is clear that each
function $f\in E\left(  \lambda_{0}...,\lambda_{2n-1}\right)  $ is reproduced
stepwise by the scheme, and Theorem \ref{ThmZero} implies that
\begin{equation}
{\frac{d^{s}}{dz^{s}}}a^{[k]}\left(  -\exp\left(  -2^{-(k+1)}\lambda
_{j}\right)  \right)  =0,\ \ s=0,...,\mu_{j}-1. \label{NSreconstruction2}%
\end{equation}
where $\mu_{j}$ is the \emph{multiplicity} of $\lambda_{j},$ i.e. the number
of times the value $\lambda_{j}$ occurs in $\left(  \lambda_{0},...,\lambda
_{2n-1}\right)  .$ Hence the subdivision scheme based on interpolation in
$E\left(  \lambda_{0},...,\lambda_{2n-1}\right)  $ is completely characterized
by (\ref{NSinterpolation}) and (\ref{NSreconstruction2}). In the terminology
of \cite{dynlevinexp} this is the \emph{even-order, symmetric and minimal rank
scheme reproducing }$E\left(  \lambda_{0},...,\lambda_{2n-1}\right)  .$ Note
that the Deslauriers-Dubuc scheme is a special case by taking $\lambda
_{0}=...=\lambda_{2n-1}=0,$ reproducing the space of algebraic polynomials
$\Pi_{2n-1}.$

\begin{definition}
\label{DefTotal}For given real numbers $\lambda_{0},...,\lambda_{2n-1}$ let
$a^{\left[  k\right]  },k\in\mathbb{N}_{0}$, be the symbols satisfying
(\ref{NSinterpolation}) and (\ref{NSreconstruction2}). For any natural number
$m\in\mathbb{N}_{0}$ we define a new subdivision scheme $S_{m}^{\Lambda_{2n}}$
defined by the symbols
\[
a^{\left[  k\right]  ,m}\left(  z\right)  =a^{\left[  k+m\right]  }\left(
z\right)  \text{ for }k\in\mathbb{N}_{0}.
\]

\end{definition}

In the rest of the paper we shall use the notations given in Definition
\ref{DefTotal}.

\begin{remark}
It is easy to see that the subdivision scheme $S_{m}^{\Lambda_{2n}}$ is again
an \emph{even-order, symmetric and minimal rank scheme reproducing }$E\left(
\lambda_{0}/2^{m},...,\lambda_{2n-1}/2^{m}\right)  $.
\end{remark}

Many properties of the subdivision scheme $S_{m}^{\Lambda_{2n}}$ for
exponential polynomials can be derived from their polynomial counterpart, the
Deslauriers-Dubuc scheme. The key to these results depend on the following
observation in \cite[Theorem 2.7]{dynlevinexp}.

\begin{proposition}
\label{Pequivalent} The subdivision scheme $S_{m}^{\Lambda_{2n}}$ is
exponentially asymptotically equivalent to the $2n$-point Deslauriers-Dubuc
subdivision scheme, i.e. there exists $C>0$ such that
\begin{equation}
\sum_{|j|\leq2n-1}|p_{j}-a_{j}^{[k],m}|\leq C2^{-k}\text{ for all }%
k\in\mathbb{N}_{0}. \label{aAsymptEquivalent}%
\end{equation}

\end{proposition}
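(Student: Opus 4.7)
The plan is to factor out the $2n$ reproduction zeros of $a^{[k+m]}$ and reduce the asymptotic equivalence to a stable linear system whose matrix depends polynomially on those zero locations. Set $\beta_j^{[k]}:=\exp(-2^{-(k+m+1)}\lambda_j)$ for $j=0,\ldots,2n-1$, listed with multiplicity; the elementary estimate $|e^{t}-1|\leq 2|t|$ for small $|t|$ yields a constant $C_{0}$ with $|\beta_j^{[k]}-1|\leq C_{0}\cdot 2^{-k}$ uniformly in $k,j$.

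By Theorem \ref{ThmZero}, the symbol $a^{[k+m]}$ satisfies (\ref{NSreconstruction2}), so the polynomial $z^{2n-1}a^{[k+m]}(z)$ (of degree $\leq 4n-2$) is divisible by $A^{[k]}(z):=\prod_{j=0}^{2n-1}(z+\beta_j^{[k]})$, giving a factorization
\[
z^{2n-1}a^{[k+m]}(z)=A^{[k]}(z)\,T^{[k]}(z)
\]
with $T^{[k]}$ a polynomial of degree $\leq 2n-2$. The analogous factorization for $D_{2n}$, namely $z^{2n-1}D_{2n}(z)=(1+z)^{2n}T(z)$ with $T$ of degree $\leq 2n-2$, is (\ref{ab}) multiplied by $z^{2n-1}$. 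The interpolation identity (\ref{NSinterpolation}), inserted into the factored form, becomes
\[
A^{[k]}(z)T^{[k]}(z)-A^{[k]}(-z)T^{[k]}(-z)=2z^{2n-1},
\]
and since the left side is odd, matching the $2n-1$ odd coefficients on both sides produces a square $(2n-1)\times(2n-1)$ linear system $\mathcal{M}(\beta_0^{[k]},\ldots,\beta_{2n-1}^{[k]})\,\vec T^{[k]}=\vec e$ whose matrix $\mathcal{M}$ depends polynomially on the $\beta_j^{[k]}$ and whose right-hand side is independent of $k$.

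At $\beta_j=1$ for every $j$, the system reduces to $(1+z)^{2n}T(z)-(1-z)^{2n}T(-z)=2z^{2n-1}$, whose unique solution in degree $\leq 2n-2$ is the polynomial $T$ above, so $\mathcal{M}(1,\ldots,1)$ is invertible. Using the identity $A^{-1}-B^{-1}=A^{-1}(B-A)B^{-1}$ together with continuity of matrix inversion on a neighbourhood of $\mathcal{M}(1,\ldots,1)$ and the bound $|\beta_j^{[k]}-1|\leq C_0 2^{-k}$, there is a constant $K$ with $\|\vec T^{[k]}-\vec T\|\leq K\cdot 2^{-k}$ for all large $k$. Meanwhile, Theorem \ref{ThmSuff} applied with leading coefficients $c^{[k]}=c=1$ and simple roots relabelled as $\alpha_j^{[k]}=-\beta_j^{[k]}\to -1$ yields $|A^{[k]}(z)-(1+z)^{2n}|\leq K'\cdot 2^{-k}$ on any compact disk. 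Combining the two bounds via the triangle inequality, every coefficient of
\[
z^{2n-1}\bigl(a^{[k+m]}(z)-D_{2n}(z)\bigr)=A^{[k]}(z)T^{[k]}(z)-(1+z)^{2n}T(z)
\]
is of order $2^{-k}$, which, after dividing by $z^{2n-1}$ and summing over the finite common support $\{-2n+1,\ldots,2n-1\}$, is exactly (\ref{aAsymptEquivalent}).

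The main obstacle will be verifying the invertibility of $\mathcal{M}(1,\ldots,1)$, i.e.\ the uniqueness of a polynomial $T$ of degree $\leq 2n-2$ solving $(1+z)^{2n}T(z)-(1-z)^{2n}T(-z)=2z^{2n-1}$. My route is to observe that the linear map $T\mapsto (1+z)^{2n}T(z)-(1-z)^{2n}T(-z)$ sends the $(2n-1)$-dimensional space of polynomials of degree $\leq 2n-2$ into the $(2n-1)$-dimensional space of odd polynomials of degree $\leq 4n-3$, and to rule out a nontrivial kernel using coprimality of $(1+z)^{2n}$ and $(1-z)^{2n}$: any $T$ in the kernel would force $(1+z)^{2n}$ to divide $(1-z)^{2n}T(-z)$ and hence $(1+z)^{2n}\mid T(-z)$, contradicting the degree bound unless $T\equiv 0$.
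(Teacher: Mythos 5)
Your argument is correct, and it is worth noting that the paper does not actually prove Proposition \ref{Pequivalent}: it is imported wholesale from Theorem 2.7 of \cite{dynlevinexp}, whose proof runs a perturbation argument on the collocation system that defines the mask coefficients directly. Your route is a self-contained variant of the same idea, but with a different decomposition: you first peel off the $2n$ reproduction zeros $-\beta_j^{[k]}$ (whose presence is guaranteed by Theorem \ref{ThmZero} via (\ref{NSreconstruction2})), handle that factor by Theorem \ref{ThmSuff}, and then pin down the cofactor $T^{[k]}$ as the solution of the Bezout-type system coming from the interpolation identity (\ref{NSinterpolation}). The dimension count ($2n-1$ odd coefficients against $2n-1$ unknowns) and the coprimality argument for the invertibility of $\mathcal{M}(1,\dots,1)$ are both sound, and the degree bookkeeping ($\deg T^{[k]}\leq 2n-2$ since $\deg z^{2n-1}a^{[k+m]}\leq 4n-2$ and $\deg A^{[k]}=2n$) checks out. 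A bonus of your organization is that the estimate $\|\vec T^{[k]}-\vec T\|\leq K2^{-k}$ is, up to the normalization $T^{[k]}(z)=2^{-2n}z^{2n-1}b^{[k+m]}(z)$, exactly the auxiliary bound (\ref{bk}) on $b^{[k]}-b_{D_{2n}}$ that the paper later quotes from \cite[Formula (2.32)]{dynlevinexp} in the proof of Theorem \ref{AsymFourier1}, so you get that for free as well. Two small points to make explicit in a final write-up: the bound on $\vec T^{[k]}-\vec T$ is only obtained for $k\geq k_1$ (where $\mathcal{M}(\beta^{[k]})$ is guaranteed invertible), so you should remark that the finitely many remaining levels are absorbed into the constant $C$ since each $\sum_{|j|\leq 2n-1}|p_j-a_j^{[k],m}|$ is finite; and the passage from the uniform bound $|A^{[k]}(z)-(1+z)^{2n}|\leq K'2^{-k}$ on a disk to a coefficientwise bound should be justified (Cauchy estimates, or simply the Lipschitz dependence of the elementary symmetric functions on the $\beta_j^{[k]}$).
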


Deslauriers and Dubuc showed in \cite{deslaurierDubuc1989} that their scheme
is $C^{0}$-convergent implying the existence of a basic limit function which
will be denoted in the following by $\Phi^{D_{2n}}.$ Furthermore, one can find
sufficient criteria for the $C^{\ell}$-convergence in
\cite{deslaurierDubuc1989}.

According to Theorem 2.10 in \cite{dynlevinexp} we have the following result

\begin{theorem}
Let $\ell\in\mathbb{N}_{0}.$ If the Deslauriers-Dubuc subdivision scheme is
$C^{\ell}$-convergent then the subdivision scheme $S_{m}^{\Lambda_{2n}}$ is
$C^{\ell}$-convergent as well.
\end{theorem}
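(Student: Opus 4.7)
The plan is to reduce the claim to a general principle for asymptotically equivalent subdivision schemes, using Proposition \ref{Pequivalent} as the bridge between the non-stationary scheme $S_{m}^{\Lambda_{2n}}$ and the stationary Deslauriers--Dubuc scheme $S^{D_{2n}}$. Since $\sum_{k=0}^{\infty} 2^{-k}<\infty$, the exponential asymptotic equivalence (\ref{aAsymptEquivalent}) gives in particular the standard summability condition (\ref{eqAE}), so $S_{m}^{\Lambda_{2n}}$ and $S^{D_{2n}}$ are asymptotically equivalent in the sense of \cite{dynLevinJMAA95}. The point is then to transfer $C^{\ell}$-convergence across this equivalence.

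For the case $\ell=0$ I would argue directly by a telescoping comparison of iterates. Writing $T_{b}^{[k]}$ and $T_{a}$ for the subdivision operators associated to the masks $a_{j}^{[k],m}$ and $p_{j}$ respectively, one has
\[
T_{b}^{[k]}\cdots T_{b}^{[0]}f^{0}-T_{a}^{k+1}f^{0}=\sum_{j=0}^{k}T_{b}^{[k]}\cdots T_{b}^{[j+1]}\bigl(T_{b}^{[j]}-T_{a}\bigr)T_{a}^{j}f^{0}.
\]
Each operator is bounded in $\ell^{\infty}$ uniformly in $k$ (the norms are controlled by Proposition \ref{Pequivalent} since the $a_{j}^{[k],m}$ stay within a fixed bounded neighbourhood of $p_{j}$), and the middle factor is bounded by $\sum_{|j|\leq 2n-1}|p_{j}-a_{j}^{[k],m}|\leq C2^{-k}$. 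Summability of this quantity in $k$ and the $C^{0}$-convergence of $T_{a}^{k+1}f^{0}$ to the Deslauriers--Dubuc limit function together imply uniform convergence of the iterates of $S_{m}^{\Lambda_{2n}}$.

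For $\ell\geq 1$ the argument proceeds by induction via derived (difference) schemes. Because $S^{D_{2n}}$ reproduces constants, the factorization (\ref{ab}) lets us form the first derived stationary scheme $S^{D_{2n}}_{d}$ with symbol $2D_{2n}(z)/(1+z)$, whose basic limit function is the derivative of $\Phi^{D_{2n}}$. By (\ref{NSreconstruction2}) with $\lambda_{j}=0$ (or the analogous shifted zero if $0\notin\{\lambda_{j}\}$, using any single $\lambda_{j}$) every $a^{[k],m}(z)$ admits a factor $(z+\exp(-2^{-(m+k+1)}\lambda_{j}))$, and the quotient polynomials have uniformly bounded degree; the derived non-stationary scheme has mask obtained by dividing out this factor and renormalising by $2$. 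Theorem \ref{ThmSuff}, combined with the fact that $\exp(-2^{-(m+k+1)}\lambda_{j})\to 1$ with rate $O(2^{-k})$, then shows that the derived non-stationary symbols remain exponentially asymptotically equivalent to the derived Deslauriers--Dubuc symbol. Applying the $C^{0}$ step of the previous paragraph to the derived pair yields $C^{0}$-convergence of the derived scheme of $S_{m}^{\Lambda_{2n}}$, hence $C^{1}$-convergence of $S_{m}^{\Lambda_{2n}}$ itself. Iterating the factorization $\ell$ times gives the claim.

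The main obstacle is the bookkeeping for $\ell\geq 1$: one must check that after dividing out $\ell$ linear factors of the form $(z+\exp(-2^{-(m+k+1)}\lambda_{j}))$ from $a^{[k],m}(z)$, the remaining symbols are uniformly of bounded degree, satisfy the hypotheses of Proposition \ref{PropSuff}, and are still at distance $O(2^{-k})$ from the corresponding iterated derived Deslauriers--Dubuc symbol on every compact annulus. This is precisely what Theorem \ref{ThmSuff} is designed to control, provided one can track that the relevant roots and leading coefficients of the quotients converge at the required exponential rate; Theorem \ref{ThmNec} provides the converse control needed to locate those roots inductively. Once this algebraic bookkeeping is carried out, the $C^{\ell}$-convergence of $S_{m}^{\Lambda_{2n}}$ follows by a clean induction on $\ell$.
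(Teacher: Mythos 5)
The paper does not actually prove this theorem: it is quoted directly as Theorem 2.10 of \cite{dynlevinexp}, whose proof runs through the machinery of asymptotically equivalent schemes from \cite{dynLevinJMAA95}. Your overall strategy — use Proposition \ref{Pequivalent} to pass to the Deslauriers--Dubuc scheme, handle $\ell\geq1$ by factoring out the reproduced exponential factors and comparing derived schemes — is the same strategy as in those references, and it is the right one; in particular you correctly recognize that a naive application of asymptotic equivalence cannot give $C^{\ell}$-convergence directly (that would require $\sum_{k}2^{\ell k}\|a^{[k],m}-p\|<\infty$, which fails for $\ell\geq1$ at the rate $2^{-k}$), so the detour through derived schemes is genuinely necessary.

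However, your base case $\ell=0$ has a real gap. In the telescoping identity the factor $T_{b}^{[k]}\cdots T_{b}^{[j+1]}$ is a product of $k-j$ operators, and a uniform bound $\|T_{b}^{[i]}\|_{\infty}\leq M$ on the individual factors only yields $M^{k-j}$ for the product. For the $2n$-point Deslauriers--Dubuc masks one has $M>1$ (already for $n=2$ the odd submask has $\ell^{1}$-norm $5/4$), so the $j$-th term of your sum is only controlled by $M^{k-j}C2^{-j}$ and the sum diverges as $k\to\infty$. Uniform boundedness of the \emph{products} is essentially the stability of the non-stationary scheme, which is part of what is to be proved, not a free consequence of Proposition \ref{Pequivalent}. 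Moreover, even granting such uniform boundedness, comparing with $T_{a}^{k+1}f^{0}$ started at level $0$ only bounds $\|f^{k+1}-T_{a}^{k+1}f^{0}\|$ by $M'\sum_{j\leq k}C2^{-j}=O(1)$, which is not small and cannot force convergence of $f^{k}$ (indeed the two basic limit functions are different). The correct argument, as in \cite{dynLevinJMAA95}, compares $f^{k+p}$ with the \emph{stationary scheme restarted at level} $k$, so that only the tail $\sum_{j\geq k}2^{-j}\to0$ enters, and it obtains contractivity (not mere boundedness) from the difference scheme of $D_{2n}$, perturbed by the summable deviations. With the $C^{0}$ step repaired in this way, your induction on $\ell$ via the factorizations $(z+z_{j}^{[k]})/2$ and Theorems \ref{ThmSuff} and \ref{ThmNec} is sound in outline, though one must still track that the derived non-stationary scheme acts on level-dependent generalized differences $f_{j}^{k}-z^{[k]}f_{j-1}^{k}$ rather than ordinary ones.
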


According to the last theorem the subdivision scheme $S_{m}^{\Lambda_{2n}}$
has a basic limit function which will be denoted in the following by $\Phi
_{m}^{\Lambda_{2n}}$.

A function $f:\mathbb{R}^{d}\rightarrow\mathbb{C}$ is called \emph{Lipschitz
function of order} $\alpha$ (or \emph{H\"{o}lder function of order} $\alpha)$
if there exists a number $L>0$ such that for all $x,y\in\mathbb{R}^{d}$%
\[
\left\vert f\left(  x\right)  -f\left(  y\right)  \right\vert \leq L\left\vert
x-y\right\vert ^{\alpha}.
\]
The set of all Lipschitz functions of order $\alpha$ is denoted by
$Lip(\alpha).$ Functions in $Lip\left(  \alpha\right)  $ can be characterized
via Fourier transform and we just recall Lemma 7.1 in \cite[p. 56]%
{deslaurierDubuc1989}:

\begin{lemma}
\label{Lem1}Let $f:\mathbb{R}\rightarrow\mathbb{C}$ be an integrable function
whose Fourier transform is $g\left(  \xi\right)  .$ We assume that $\left|
\xi\right|  ^{\ell+\alpha}g\left(  \xi\right)  $ is integrable where $\ell
\in\mathbb{N}_{0}$ and $\alpha\in\left[  0,1\right]  .$ If $\alpha=0$ then $f$
is $\ell$ times continuously differentiable. If $\alpha\neq0$, then
$f^{\left(  \ell\right)  }$ is a Lipschitz function of order $\alpha.$
\end{lemma}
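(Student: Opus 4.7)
The plan is to prove Lemma \ref{Lem1} by a standard Fourier-inversion argument, relying on the formula $f(x)=\frac{1}{2\pi}\int g(\xi)e^{ix\xi}\,d\xi$. One first observes that the hypothesis that $|\xi|^{\ell+\alpha}g(\xi)$ is integrable, combined with $f\in L^1$ (whose Fourier transform $g$ is in particular bounded and continuous), is enough to make the inversion formula pointwise valid after modifying $f$ on a null set: the integrability of $|\xi|^{\ell+\alpha}g(\xi)$ on $|\xi|\geq 1$ together with the boundedness of $g$ on $|\xi|\leq 1$ gives $g\in L^1(\mathbb{R})$.

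For the case $\alpha=0$, I would differentiate under the integral sign $\ell$ times. Each differentiation produces a factor $i\xi$, so formally
\[
f^{(\ell)}(x)=\frac{1}{2\pi}\int (i\xi)^{\ell}g(\xi)e^{ix\xi}\,d\xi.
\]
The hypothesis that $|\xi|^{\ell}g(\xi)$ is integrable supplies a uniform majorant, so the dominated convergence theorem justifies the differentiation and shows, by the same theorem applied to $x\mapsto x_0$, that $f^{(\ell)}$ is continuous. This gives $f\in C^{\ell}(\mathbb{R})$.

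For $\alpha\in(0,1]$ the integrand of $f^{(\ell)}$ is already integrable by the previous case, so I may form the difference
\[
f^{(\ell)}(x)-f^{(\ell)}(y)=\frac{1}{2\pi}\int (i\xi)^{\ell}g(\xi)\bigl(e^{ix\xi}-e^{iy\xi}\bigr)\,d\xi.
\]
The key elementary estimate is $|e^{it}-1|\leq 2$ and $|e^{it}-1|\leq |t|$, from which $|e^{it}-1|\leq 2^{1-\alpha}|t|^{\alpha}$ follows by interpolation for any $\alpha\in[0,1]$. Applied to $t=(x-y)\xi$ this yields
\[
|e^{ix\xi}-e^{iy\xi}|\leq 2^{1-\alpha}|\xi|^{\alpha}|x-y|^{\alpha},
\]
and substitution gives
\[
|f^{(\ell)}(x)-f^{(\ell)}(y)|\leq \frac{2^{1-\alpha}}{2\pi}|x-y|^{\alpha}\int|\xi|^{\ell+\alpha}|g(\xi)|\,d\xi,
\]
proving $f^{(\ell)}\in Lip(\alpha)$ with constant $L=\frac{2^{1-\alpha}}{2\pi}\||\xi|^{\ell+\alpha}g\|_{L^1}$.

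There is no real obstacle here; the only delicate point is the justification of Fourier inversion and of differentiating under the integral sign, both of which are handled by straightforward applications of dominated convergence once the integrability hypothesis is unpacked. The interpolation inequality $|e^{it}-1|\leq 2^{1-\alpha}|t|^{\alpha}$ is the single non-trivial gadget, and it is standard.
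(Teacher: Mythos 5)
Your proof is correct. Note that the paper itself does not prove this lemma at all: it is quoted verbatim as Lemma 7.1 of Deslauriers--Dubuc \cite[p.~56]{deslaurierDubuc1989}, so there is no in-paper argument to compare against; what you have written is the standard self-contained justification of that citation. Two small points are worth making explicit. First, the conclusion ``$f$ is $\ell$ times continuously differentiable'' must be read as ``$f$ agrees a.e.\ with a $C^{\ell}$ function,'' exactly as you indicate when you modify $f$ on a null set after inversion. Second, when you differentiate under the integral sign $\ell$ times you need a dominating function at \emph{each} intermediate stage, i.e.\ the integrability of $\left\vert \xi\right\vert ^{j}\left\vert g\left(  \xi\right)  \right\vert $ for every $j=1,\dots,\ell$, not only for $j=\ell$; this does follow from your hypotheses by splitting the line at $\left\vert \xi\right\vert =1$ (bounding $\left\vert \xi\right\vert ^{j}\left\vert g\right\vert $ by $\left\vert g\right\vert $ inside and by $\left\vert \xi\right\vert ^{\ell+\alpha}\left\vert g\right\vert $ outside, since $j\leq\ell+\alpha$), the same splitting you already used to show $g\in L^{1}$, but it deserves a sentence. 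The interpolation bound $\left\vert e^{it}-1\right\vert \leq\min\left(  2,\left\vert t\right\vert \right)  \leq2^{1-\alpha}\left\vert t\right\vert ^{\alpha}$ and the resulting H\"older estimate are exactly right and give the explicit Lipschitz constant.
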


\begin{theorem}
\label{AsymFourier1} Let $\alpha\in\left[  0,1\right)  $ and $\ell
\in\mathbb{N}_{0}$ and assume that the basic limit function $\Phi^{D_{2n}}$ of
the $2n$-point Deslauriers-Dubuc scheme satisfies for some $\varepsilon>0$ and
$C>0$ the inequality
\begin{equation}
\left\vert \widehat{\Phi^{D_{2n}}}(\omega)\right\vert \leq C\left(
|\omega|+1\right)  ^{-\ell-1-\alpha-\varepsilon} \label{ltalpha1}%
\end{equation}
for all $\omega\in\mathbb{R}.$ Then the basic limit function $\Phi
_{m}^{\Lambda_{2n}}$ of the scheme $S_{m}^{\Lambda_{2n}}$ has its $\ell-$th
derivative in $Lip(\alpha)$.
\end{theorem}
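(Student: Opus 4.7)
The plan is to invoke Lemma~\ref{Lem1}: since $\alpha\in[0,1)$, it suffices to show that $|\omega|^{\ell+\alpha}|\widehat{\Phi_m^{\Lambda_{2n}}}(\omega)|$ is integrable on $\mathbb{R}$. The heart of the argument is to establish a pointwise comparison
\[
|\widehat{\Phi_m^{\Lambda_{2n}}}(\omega)|\;\le\; K\,|\widehat{\Phi^{D_{2n}}}(\omega)| \qquad \text{for all }\omega\in\mathbb{R},
\]
with $K$ independent of $\omega$. Once this is in hand, the hypothesis \eqref{ltalpha1} yields $|\omega|^{\ell+\alpha}|\widehat{\Phi_m^{\Lambda_{2n}}}(\omega)|\le KC\,|\omega|^{\ell+\alpha}(1+|\omega|)^{-\ell-1-\alpha-\varepsilon}\le KC(1+|\omega|)^{-1-\varepsilon}$; the surplus decay $(1+|\omega|)^{-1-\varepsilon}$ forces integrability, and Lemma~\ref{Lem1} then delivers $(\Phi_m^{\Lambda_{2n}})^{(\ell)}\in Lip(\alpha)$.

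To obtain the comparison I express both Fourier transforms as infinite products via Proposition~\ref{PropProd} in the variable $z_k := e^{i\omega/2^k}$ and factor each symbol according to its zeros on the unit circle. The Deslauriers--Dubuc symbol factors as $\tfrac12 D_{2n}(z) = \bigl(\tfrac{1+z}{2}\bigr)^{2n}\tfrac12 b_{D_{2n}}(z)$ by \eqref{ab}, and using \eqref{NSreconstruction2} the exponential symbol factors as
\[
\tfrac12\,a^{[k-1],m}(z) \;=\; \prod_{j=0}^{2n-1}\frac{z+e^{-\lambda_j/2^{k+m}}}{1+e^{-\lambda_j/2^{k+m}}}\cdot\beta^{[k-1],m}(z),
\]
where $\beta^{[k-1],m}$ is a Laurent polynomial normalized by $\beta^{[k-1],m}(1)=1$. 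Proposition~\ref{PropQQQ} together with \eqref{eqD2nB} implies that $b_{D_{2n}}$ has no zeros on the unit circle, while Proposition~\ref{Pequivalent} together with Theorems~\ref{ThmSuff}--\ref{ThmNec} gives that $\beta^{[k-1],m}$ is exponentially asymptotically equivalent to $\tfrac12 b_{D_{2n}}$. Hence the ``smooth-part'' ratio product $\prod_{k=1}^\infty \beta^{[k-1],m}(z_k)\big/\bigl(\tfrac12 b_{D_{2n}}(z_k)\bigr)$ has factors of the form $1+O(2^{-k})$ uniformly in $\omega$ and converges to a uniformly bounded function.

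The main obstacle is the ``vanishing-part'' ratio
\[
R(\omega)\;:=\;\prod_{k=1}^\infty \prod_{j=0}^{2n-1} \frac{z_k+e^{-\lambda_j/2^{k+m}}}{(1+e^{-\lambda_j/2^{k+m}})\cdot \tfrac12(1+z_k)},
\]
whose denominator $\tfrac12(1+z_k)$ vanishes when $\omega/2^k$ is an odd multiple of $\pi$, with the numerator vanishing at a nearby point. Since the zeros of numerator and denominator match to order $O(2^{-k})$ (Theorems~\ref{ThmSuff}--\ref{ThmNec}), for each fixed $\omega$ only finitely many indices $k$ produce a small denominator, and for those indices the $O(2^{-k})$ difference in the numerator provides exact compensation; the remaining factors are $1+O(2^{-k})$ and a uniform bound $|R(\omega)|\le K'$ follows by isolating the finitely many dangerous indices and summing the absolutely convergent logarithms of the rest. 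Multiplying the two bounded ratios yields the desired constant $K$ and completes the proof.
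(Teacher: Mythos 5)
Your reduction of the ``smooth part'' (the ratio of $b^{[k]}$ to $b_{D_{2n}}$, using that $b_{D_{2n}}$ is bounded away from zero on the unit circle and that the two are exponentially asymptotically equivalent) is exactly what the paper does, and that step is fine. The fatal problem is the ``vanishing part'': the pointwise comparison $|\widehat{\Phi_m^{\Lambda_{2n}}}(\omega)|\le K\,|\widehat{\Phi^{D_{2n}}}(\omega)|$ that your whole argument rests on is \emph{false} whenever some $\lambda_j\ne 0$. The claimed compensation in $R(\omega)$ does not occur: the zero of the numerator $z+e^{-\lambda_j/2^{k+m}}$ sits at $z=-e^{-\lambda_j/2^{k+m}}$, a negative real number of modulus $\ne 1$, i.e.\ \emph{off} the unit circle. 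Restricted to $z_k=e^{i\omega/2^k}$ the numerator therefore never vanishes (its modulus is bounded below by $|1-e^{-\lambda_j/2^{k+m}}|\sim |\lambda_j|2^{-k-m}$), while the denominator $\tfrac12(1+z_k)$ does vanish, at $\omega=2^k\pi(2j+1)$. At such a point $R$ has a genuine pole, and indeed $\widehat{\Phi^{D_{2n}}}(2\pi)=0$ (from the sinc factor) while $\widehat{\Phi_m^{\Lambda_{2n}}}(2\pi)\ne 0$ in general, so no constant $K$ can work. ``Finitely many dangerous indices per $\omega$'' does not help, since the bound at those indices is not uniform in $\omega$.

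This is precisely the difficulty the paper flags (``this should be done with care since $\frac{\sin(\omega/2)}{\omega/2}$ vanishes at an infinite number of points''), and it is why the paper never divides by the sinc. Instead it: (a) bounds the auxiliary transform $\widehat{\Phi_m^{c}}$, built from the symbols $\left(\frac{1+z}{2}\right)^{2n}b^{[m+k]}(z)$, by $M_m|\widehat{\Phi^{D_{2n}}}|$ using only the smooth-part ratio; (b) passes to $\widehat{\Psi_m}(\omega)=\left(\frac{\sin(\omega/2)}{\omega/2}\right)^{2n}\widehat{\Phi_m^{\Lambda_{2n}}}(\omega)$, which equals $\widehat{\Phi_m^{c}}$ multiplied by the transforms of $2n$ exponential $B$-splines, each contributing an extra factor of decay $|\omega|^{-1}$, so Lemma \ref{Lem1} gives $\Psi_m^{(2n+\ell)}\in Lip(\alpha)$; and (c) removes the $2n$ sinc factors in the \emph{time} domain: if $g=B_0\ast f$ with $f$ compactly supported and $g'\in Lip(\alpha)$, then $g'(s)=f(s)-f(s-1)$ telescopes to $f(t)=\sum_{j=0}^{N}g'(t+j)$, so each removal costs exactly one derivative. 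You need some version of steps (b) and (c); without them the proof does not go through.
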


\begin{proof}
We begin with some general remarks: let us define $z_{j}^{[k]}=\exp\left(
-2^{-(k+1)}\lambda_{j}\right)  .$ Using (\ref{NSreconstruction2}) we can
write
\begin{equation}
a^{[k]}(z)=\left(  \prod_{j=1}^{2n}{\frac{z+z_{j}^{[k]}}{2}}\right)
\ b^{[k]}(z).\label{abk}%
\end{equation}
Let $D_{2n}\left(  z\right)  =\left(  \frac{1+z}{2}\right)  ^{2n}b_{D_{2n}%
}(z)$ be the symbol of the Deslauriers-Dubuc scheme. It was shown in
\cite[Formula (2.32)]{dynlevinexp} that $b^{[k]}$ is exponentially
asymptotically equivalent to $b_{D_{2n}},$ so there exists a constant $B>0$
such that
\begin{equation}
\left\vert b^{[k]}\left(  e^{i\omega}\right)  -b_{D_{2n}}\left(  e^{i\omega
}\right)  \right\vert \leq B\cdot2^{-k}\label{bk}%
\end{equation}
for all $\omega\in\mathbb{R}$ and for all $k\in\mathbb{N}_{0}.$ As an
intermediate step we consider the non-stationary scheme $S_{m}^{c}$ defined by
the symbols
\begin{equation}
c_{m}^{[k]}(z)={\left(  {\frac{{z+1}}{2}}\right)  }^{2n}b^{[m+k]}%
(z).\label{cbk}%
\end{equation}
According to the proof of Theorem 2.10 in \cite{dynlevinexp} the scheme
$S_{m}^{c}$ has a basic limit function denoted by $\Phi_{m}^{c}$. Let
$\Phi_{m}^{\Lambda_{2n}}$ be the basic limit function of $S_{m}^{\Lambda_{2n}%
}.$ By Proposition \ref{PropProd}
\begin{equation}
\widehat{\Phi_{m}^{\Lambda_{2n}}}(\omega)=\prod_{k=1}^{\infty}{\frac{1}{2}%
}a^{[m+k-1]}(e^{i\omega2^{-k}}).\label{Fprod}%
\end{equation}
Similarly, $\widehat{\Phi^{D_{2n}}}(\omega)=\prod_{k=1}^{\infty}{\frac{1}{2}%
}a(e^{i\omega2^{-k}})$ and we see that
\begin{align}
{\frac{\widehat{\Phi_{m}^{c}}(\omega)}{\widehat{\Phi^{D_{2n}}}(\omega)}} &
=\prod_{k=1}^{\infty}{\frac{b^{[m+k-1]}\left(  e^{i\omega2^{-k}}\right)
}{b_{D_{2n}}\left(  e^{i\omega2^{-k}}\right)  }}\label{ratio}\\
&  =\prod_{k=1}^{\infty}\left(  1+{\frac{b^{[m+k-1]}\left(  e^{i\omega2^{-k}%
}\right)  -b_{D_{2n}}\left(  e^{i\omega2^{-k}}\right)  }{b_{D_{2n}}\left(
e^{i\omega2^{-k}}\right)  }}\right)  .\label{ratio2}%
\end{align}
It is well known \cite{deslaurierDubuc1989} that the trigonometric polynomial
$b_{D_{2n}}\left(  e^{-i\omega2^{-k}}\right)  $ does not vanish on the unit
circle, hence the denominator in (\ref{ratio2}) satisfies
\begin{equation}
\left\vert b_{D_{2n}}\left(  e^{i\omega2^{-k}}\right)  \right\vert \geq
\delta>0\label{bbk}%
\end{equation}
for some $\delta>0.$ Using (\ref{bbk}) and (\ref{bk}) it is straightforward to
prove that the infinite product in (\ref{ratio}) is uniformly bounded for
$\omega\in\mathbb{R}$, and we obtain
\[
\left\vert {\frac{\widehat{\Phi_{m}^{c}}(\omega)}{\widehat{\Phi^{D_{2n}}%
}(\omega)}}\right\vert \leq M_{m}\ .
\]
Recalling that each factor ${\frac{1+z}{2}}$ induces convolution with a
$B-$spline of order $0$ (denoted as usually by $B_{0}$) the above inequality
can be written as
\begin{align}
\left\vert \widehat{\Phi_{m}^{c}}(\omega)\right\vert  &  =\left(  \frac
{\sin{\frac{\omega}{2}}}{{\frac{\omega}{2}}}\right)  ^{2n}\prod_{k=1}^{\infty
}\left\vert b^{[m+k-1]}\left(  e^{i\omega2^{-k}}\right)  \right\vert \leq
M_{m}\left\vert \widehat{\Phi^{D_{2n}}}(\omega)\right\vert \label{sinb}\\
&  \leq M_{n}C(|\omega|+1)^{-\ell-1-\alpha-\varepsilon}\ .
\end{align}
Comparing (\ref{abk}) and (\ref{cbk}) we see that in order to prove the
theorem we need to replace each factor $\frac{\sin{\frac{\omega}{2}}}%
{{\frac{\omega}{2}}}$ by the Fourier transform of the basic limit function
generated by the subdivision scheme with symbols $\left\{  \frac{z+z_{j}%
^{[k]}}{2}\right\}  .$ This should be done with care since $\frac{\sin
{\frac{\omega}{2}}}{{\frac{\omega}{2}}}$ vanishes at infinite number of points
on $\mathbb{R}$. We employ another observation from \cite{dynLevinJMAA95},
that the basic limit function $\Phi_{0}$ of the scheme with symbols $\left\{
\frac{z+z_{j}^{[k]}}{2}\right\}  $ is an exponential $B-$spline of order 0,
\[
B_{0}^{j}\left(  \omega\right)  :=\left\{
\begin{array}
[c]{c}%
e^{\lambda_{j}x}\qquad\text{for }x\in\left[  0,1\right]  \\
0\qquad\qquad\text{otherwise}%
\end{array}
\right.  .
\]

Adding such factors, $\left\{  \frac{z+z_{j}^{[k]}}{2}\right\}  _{j=1}^{2n}$
to the symbols $\{c^{[k]}\}$ results in repeated convolutions of
$\widehat{\Phi_{m}^{c}}$ with $\{B_{0}^{j}\}_{j=1}^{2n}$. Since $\widehat
{B_{0}^{j}}(x)$ decays as ${\frac{1}{|x|}}$ for $|\omega|\longrightarrow
\infty$, each convolution adds one power to the decay power $\alpha$ in
(\ref{sinb}). Hence, after $2n$ convolutions we obtain
\begin{align}
\widehat{\Psi_{m}}(\omega) &  :=\left(  \frac{\sin{\frac{\omega}{2}}}%
{{\frac{\omega}{2}}}\right)  ^{2n}\prod_{k=1}^{\infty}a^{[m+k-1]}b\left(
e^{-i\omega2^{-k}}\right)  \label{sinb1}\\
&  =\left(  \frac{\sin{\frac{\omega}{2}}}{{\frac{\omega}{2}}}\right)
^{2n}\widehat{\Phi_{m}^{\Lambda_{2n}}}\left(  \omega\right)  =O\left(  \left(
|\omega|+1\right)  ^{-\ell-1-\alpha-2n}\right)  \ .\nonumber
\end{align}
It follows by Lemma \ref{Lem1} that the $\left(  2n+\ell\right)  -$th
derivative of ${\Psi}_{m}$ is $Lip(\alpha)$. Now we are ready to return to
$\widehat{\Phi_{m}^{\Lambda_{2n}}}$ by removing the factors $\left(
\frac{\sin{\frac{\omega}{2}}}{{\frac{\omega}{2}}}\right)  ^{2n}$ from
$\widehat{\Psi_{m}}(\omega)$. We need to show that each factor removed implies
that the order of the derivative which is in $Lip(\alpha)$ is reduced by one.
To show this we consider $g=B_{0}\ast f$, where $f$ is a function of compact
support and $g^{\prime}$ is in $Lip(\alpha)$. It follows that
\[
g^{\prime}(s)=f(s)-f(s-1).
\]
Summing the above relations over all numbers $s\in\{t+j\}_{j=0}^{N}$, for
large enough $N$, we obtain
\[
f(t)=\sum_{j=0}^{N}g^{\prime}(t+j)\ ,
\]
implying that $f$ is $Lip(\alpha)$. Since $\hat{g}(\omega)={\frac{\sin
{\frac{\omega}{2}}}{{\frac{\omega}{2}}}}\hat{f}(\omega)$, we have just shown
that the consequence of removing a factor $\frac{\sin{\frac{\omega}{2}}%
}{{\frac{\omega}{2}}}$ is that the order of the derivative which is in
$Lip(\alpha)$ is reduced by one. Removing $2n$ such factors yields the desired
result, namely, that the $\ell-$th derivative of $\{\Phi_{m}^{\Lambda_{2n}}\}$
is in $Lip(\alpha)$.
\end{proof}

\begin{remark}
In \cite{DynKounchevLevinRender2}, \cite{kounchevKalagTsvet2010},
\cite{kounchevKalag2010} motivated by the multivariate polyharmonic
subdivision and wavelets on parallel hyperplanes, an explicit expression for
the polynomial $b^{\left[  k\right]  }\left(  z\right)  $ is found, for  the
case of frequencies given by $\lambda_{j}=\xi,$ $j=0,1,...,n-1,$ for some real
$\xi\geq0,$ and $\lambda_{j}=-\xi,$ for $j=n,...,2n.$ The classical
Deslaurier-Dubuc case corresponds to $\xi=0.$ 
\end{remark}

\section{Non-stationary multiresolutional analysis}

The concept of a multi-resolutional analysis, introduced by S. Mallat and Y.
Meyer, is an effective tool to construct wavelets in a simple way from a given
scaling function $\varphi$, see e.g. \cite{blatter}, \cite{daubechies}.
\emph{Non-stationary} \emph{multiresolution analysis} was introduced in
\cite{deBoorDeVoreRon} by C. de Boor, R. DeVore and A. Ron. For convenience of
the reader we recall here the definition for the univariate case:

\begin{definition}
A (non-stationary) multiresolution analysis consists of a sequence of closed
subspaces $V_{m},m\in\mathbb{Z},$ in $L^{2}\left(  \mathbb{R}\right)  $ satisfying

(i) $V_{m}\subset V_{m+1}$ for all $m\in\mathbb{Z},$

(ii) the intersection $\cap_{m\in\mathbb{Z}}V_{m}$ is the trivial subspace
$\left\{  0\right\}  ,$

(iii) the union $\cup_{m\in\mathbb{Z}}V_{m}$ is dense in $L^{2}\left(
\mathbb{R}\right)  $,

(iv) for each $m\in\mathbb{Z}$ there exists a function $\varphi_{m}\in V_{m}$
such that the family of functions $\left\{  \varphi_{m}\left(  2^{m}%
t-k\right)  :k\in\mathbb{Z}\right\}  $ form a Riesz basis of $V_{m}.$
\end{definition}

The function $\varphi_{m}$ in condition (iv) is called a \emph{scaling
function} for $V_{m}.$ The requirement (iv) means that for each $f\in V_{m}$
there exists a unique sequence $\left(  c_{k}\right)  _{k\in\mathbb{Z}}$ in
$l^{2}\left(  \mathbb{Z}\right)  $ (i.e. that $\sum_{k=-\infty}^{\infty
}\left\vert c_{k}\right\vert ^{2}<\infty)$ such that
\[
f\left(  t\right)  =\sum_{k=-\infty}^{\infty}c_{k}\varphi_{m}\left(
2^{m}t-k\right)
\]
with convergence in $L^{2}\left(  \mathbb{R}\right)  $ and
\[
A_{m}\sum_{k=-\infty}^{\infty}\left\vert c_{k}\right\vert ^{2}\leq\left\Vert
\sum_{k=-\infty}^{\infty}c_{k}\varphi_{m}\left(  2^{m}t-k\right)  \right\Vert
^{2}\leq B_{m}\sum_{k=-\infty}^{\infty}\left\vert c_{k}\right\vert ^{2}%
\]
for all $\left(  c_{k}\right)  _{k\in\mathbb{Z}}$ in $l^{2}\left(
\mathbb{Z}\right)  $ with $0<A_{m}\leq B_{m}<\infty$ constants independent of
$f\in V_{m}$.

Using (i) one can define the \emph{wavelet space} $W_{m}$ as the unique
subspace such that $V_{m}\oplus W_{m}=V_{m+1}$ for $m\in\mathbb{Z}$ and
$W_{m}$ is orthogonal to $V_{m}.$ It easy to see that this implies that $W_{k}
$ and $W_{m}$ are orthogonal subspaces for $k\neq m.$ Conditions (ii) and
(iii) imply that
\[
L^{2}\left(  \mathbb{R}\right)  =\oplus_{m\in\mathbb{Z}}W_{m}.
\]

We refer to \cite{deBoorDeVoreRon} for an extensive discussion on the
construction of so-called pre-wavelets for a nonstationary multiresolutional
analysis, see also \cite{Vonesch}. Important examples of nonstationary
multiresolutions occur in the context of cardinal exponential-splines wavelets
which generalizes the work of C.K. Chui and J.Z. Wang about cardinal spline
wavelets in \cite{ChWa92}, see \cite{Chui1}, \cite{Chui2}. The interested
reader may consult \cite{Micc76}, \cite{Schu81} for the theory of exponential
splines and \cite{deBoorDeVoreRon}, \cite{okbook}, \cite{kounchevRender},
\cite{kounchevrenderpams}, \cite{LySc93} for the construction of wavelets in
this context. In passing we mention that the results in the last cited papers
have been rediscovered in \cite{UnBl05}.

\begin{definition}
A multiresolutional analysis is called stationary if in condition (iv) the
scaling function $\varphi_{m}\in V$ is the same for all $m\in\mathbb{Z}.$
\end{definition}

In this paper we shall be concerned only with orthonormal non-stationary MRA:

\begin{definition}
A multiresolutional analysis is called orthonormal if in condition (iv) the
functions $t\longmapsto2^{m/2}\varphi_{m}\left(  2^{m}t-k\right)  $ for
$k\in\mathbb{Z}$ are an orthonormal basis of $V_{m}.$
\end{definition}

Let us recall that an \emph{orthonormal wavelet} $\psi$ is a function in
$L^{2}\left(  \mathbb{R}\right)  $ such that the system $\psi_{m,k}\left(
x\right)  =2^{m/2}\psi\left(  2^{m}x-k\right)  $ with $m,k\in\mathbb{Z}$ is an
orthonormal basis of $L^{2}\left(  \mathbb{R}\right)  .$ In the context of
nonstationary wavelet analysis one wants to find a sequence of functions
$\psi_{m}\in L^{2}\left(  \mathbb{R}\right)  ,m\in\mathbb{Z},$ such that
\[
\psi_{m,k}\left(  x\right)  =2^{m/2}\psi_{m}\left(  2^{m}x-k\right)
\]
with $m,k\in\mathbb{Z}$ is an orthonormal basis of $L^{2}\left(
\mathbb{R}\right)  .$

\section{Daubechies type wavelets}

Daubechies wavelets $\psi$ are orthonormal wavelets with compact support and
certain degree of smoothness. The construction of Daubechies wavelets is often
presented in the following way (see e.g. \cite{blatter}, \cite{HeWe96},
\cite{LMR98}) : using the concept of an orthonormal MRA it suffices to
construct a suitable scaling function $\varphi.$ Elementary considerations
show that the Fourier transform $\widehat{\varphi}$ of the scaling function
$\varphi$ should be of the form
\[
\widehat{\varphi}\left(  \omega\right)  =\prod\limits_{k=1}^{\infty}m\left(
2^{-k}\omega\right)
\]
where $m\left(  \omega\right)  $ is a trigonometric polynomial with real
coefficients and $m\left(  0\right)  =1$ satisfying the equation%
\begin{equation}
\left\vert m\left(  \omega\right)  \right\vert ^{2}+\left\vert m\left(
\omega+\pi\right)  \right\vert ^{2}=1. \label{eqCQF}%
\end{equation}
This leads to the question which \emph{non-negative} trigonometric polynomials
$q\left(  \omega\right)  $ satisfy an equation of the type
\begin{equation}
q\left(  \omega\right)  +q\left(  \omega+\pi\right)  =1\text{ and }q\left(
0\right)  =1. \label{eqCQF2}%
\end{equation}
There are many explicit solutions of (\ref{eqCQF2}). For example, if $n$ is a
natural number then the trigonometric polynomial
\[
q_{n}\left(  \omega\right)  =1-c_{n}\int\limits_{0}^{\omega}\left(  \sin
t\right)  ^{2n-1}dt
\]
with $c_{n}:=\int\limits_{0}^{\pi}\sin^{2n-1}tdt$ satisfies equation
(\ref{eqCQF2}). By the Fej\'{e}r-Riesz lemma one can find a (non-unique)
trigonometric polynomial $m\left(  \omega\right)  $ such that
\begin{equation}
q_{n}\left(  \omega\right)  =\left\vert m\left(  \omega\right)  \right\vert
^{2}. \label{eqDaub1}%
\end{equation}
We call a Laurent polynomial $m\left(  \omega\right)  $ with real coefficients
and $m\left(  0\right)  =1$ satisfying (\ref{eqDaub1}) a \emph{Daubechies
filter of order }$n.$ The \emph{Daubechies scaling function }$\varphi^{m}%
$\emph{\ for the Daubechies filter} $m\left(  \omega\right)  $ is then defined
by
\[
\widehat{\varphi^{m}}\left(  \omega\right)  =\prod\limits_{k=1}^{\infty
}m\left(  2^{-k}\omega\right)  .
\]
This procedure is elegant but the construction of the trigonometric polynomial
$q_{n}$ in the above approach seems to be rather miraculous. Let us emphasize
that I. Daubechies has shown much more (see e.g. \cite[p. 210]{daubechies} or
\cite{walnut}): the regularity of the wavelet and the scaling function imply
that the symbol $m\left(  \omega\right)  $ must contain a factor $\left(
1+e^{i\omega}\right)  ^{n}/2^{n}.$ Hence $q_{n}\left(  \omega\right)  $ is of
the form
\[
q_{n}\left(  \omega\right)  =\frac{\left(  1+e^{i\omega}\right)  ^{2n}}{2^{n}%
}F_{2n-1}\left(  \omega\right)
\]
where $F_{2n-1}\left(  \omega\right)  $ is a suitable trigonometric polynomial
with real coefficients which can be determined by Bezout's theorem from
(\ref{eqCQF2}). Indeed, it follows from these considerations that
\[
q_{n}\left(  \omega\right)  =D_{2n}\left(  e^{i\omega}\right)
\]
where $D_{2n}$ is the symbol of the Deslauries-Dubuc subdivision scheme, a
fact which is already mentioned by Daubechies in her book \cite[Section
6.5]{daubechies} giving credit to this observation to M.J. Shensa in
\cite{Shen91}, see \cite[p. 210]{daubechies}. \emph{Hence the
Deslauriers-Dubuc scheme leads in a very natural and direct way to the
construction of the Daubechies scaling function and therefore, by MRA-methods,
to Daubechies wavelets.}

In this section we want to use this concept in order to define Daubechies type
wavelets for exponential polynomials. In this setting we have some additional
freedom which is interesting for applications: we may choose real numbers
$\lambda_{0},...,\lambda_{n-1}$ and we shall construct \emph{Daubechies type
wavelets reconstructing the space} $E\left(  \lambda_{0},...,\lambda
_{n-1}\right)  .$ In the case of Daubechies wavelets this corresponds to the
fact that the Daubechies wavelet reproduces polynomials of degree $\leq n-1.$
$.$

We shall write shortly $\Lambda_{0}=\left(  \lambda_{0},...,\lambda
_{n-1}\right)  $ and define $\lambda_{n+j}:=-\lambda_{j}$ for $j=0,...,n-1$.
We consider now the subdivision scheme based on interpolation in $E\left(
\lambda_{0},...,\lambda_{2n-1}\right)  .$ According to Definition
\ref{DefTotal} the subdivision scheme $S_{0}^{\Lambda_{2n}}$ has the symbols
\begin{equation}
a^{\left[  k\right]  }\left(  z\right)  =\prod_{j=0}^{2n-1}{\frac
{z+z_{j}^{[k]}}{2}}\ b^{[k]}(z)\text{ with }z_{j}^{[k]}:=\exp\left(
-2^{-(k+1)}\lambda_{j}\right)  \label{abk2}%
\end{equation}
for $k\in\mathbb{N}_{0}$ and $j=0,...,2n-1.$ Crucial is the result by
Micchelli in \cite{micchelliWAnonstationary} (Proposition 5.1), proving that
the symbols $a^{[k]}$ satisfy
\begin{equation}
a^{[k]}(z)\geq0\ ,\ \text{for }\ |z|=1\ , \label{akzGE0}%
\end{equation}
with equality possible only if $z=-1$. By the Fejer-Riesz lemma we can find
for each level $k$ a \textquotedblright square root\textquotedblright\ Laurent
polynomial $M^{[k]}(z)$ with \emph{real} coefficients, satisfying
\begin{equation}
a^{[k]}(e^{i\omega})={\frac{1}{2}}|M^{[k]}(e^{i\omega})|^{2}\text{ and
}M^{\left[  k\right]  }\left(  1\right)  >0. \label{akMk}%
\end{equation}
Note that this implies that
\[
a^{[k]}(z)={\frac{1}{2}}M^{[k]}(z)M^{[k]}(\frac{1}{z})
\]
for all complex $z\neq0.$ Again, there are many Laurent polynomials
$M^{\left[  k\right]  }\left(  z\right)  $ which satisfy (\ref{akMk}) and all
possible choices can be described through suitable subsets of the zero-set of
$a^{[k]}(z).$ First we choose the roots $z=-\exp\left(  -\lambda_{j}%
/2^{k+1}\right)  $ for $j=0,...,n-1$, in order to obtain stepwise reproduction
of the space $E\left(  \lambda_{0},...,\lambda_{n-1}\right)  ,$ see
Proposition \ref{PropRe} below. Further, we have to choose another $n-1$ roots
of the factor $b^{[k]}$ in (\ref{abk2}). Since $b^{[k]}$ is symmetric, its
$2n-2$ roots come in inverse pairs, say $z_{i}$ and $z_{i}^{-1}$, and as well
complex conjugates $\overline{z_{i}}$ and $\overline{z_{i}}^{-1}$ if $z_{i}$
is not real, for $i$ in an index set $I_{n-1}.$ We choose either the set
$\left\{  z_{i},\overline{z_{i}}\right\}  $ or the set $\left\{  z_{i}%
^{-1},\overline{z_{i}}^{-1}\right\}  $ for each $i\in I_{n-1},$ leading to a
Laurent polynomial with real coefficients which still has to be normalized so
that $M^{\left[  k\right]  }\left(  1\right)  =\sqrt{2a^{[k]}(1)}>0$. We shall
call a sequence of filters $M^{\left[  k\right]  }\left(  z\right)
,\ k\in\mathbf{N}_{0},$ chosen in this way a \emph{non-stationary Daubechies
type subdivision scheme of order} $n.$

Since $M^{\left[  k\right]  }\left(  1\right)  $\ is positive it follows that
$1\leq\frac{1}{2}M^{\left[  k\right]  }\left(  1\right)  +1$ and
$a^{[k]}(1)={\frac{1}{2}}M^{[k]}(1)^{2}$ we infer that
\[
\left\vert \frac{1}{2}M^{\left[  k\right]  }\left(  1\right)  -1\right\vert
\leq\left\vert \frac{1}{2}M^{\left[  k\right]  }\left(  1\right)
-1\right\vert \left\vert \frac{1}{2}M^{\left[  k\right]  }\left(  1\right)
+1\right\vert =\left\vert \frac{1}{2}a^{\left[  k\right]  }\left(  1\right)
-1\right\vert .
\]
Since $a^{\left[  k\right]  }\left(  z\right)  $ is exponentially
asymptotically equivalent to the Deslauriers-Dubuc scheme and $\frac{1}%
{2}D_{2n}\left(  1\right)  =1$ we infer that there exists $C>0$ such that for
all $k\in\mathbb{N}_{0}$
\begin{equation}
\left\vert \frac{1}{2}M^{\left[  k\right]  }\left(  1\right)  -1\right\vert
\leq C\cdot2^{-k} \label{eqMM}%
\end{equation}

At first we notice the following result:

\begin{proposition}
\label{PropRe}Let $\lambda_{0},...,\lambda_{n-1}$ be real numbers. Then there
exists $k_{0}\in\mathbb{N}_{0}$ such that the Daubechies type subdivision
scheme reproduces stepwise functions in $E\left(  \lambda_{0},....,\lambda
_{n-1}\right)  $ for all levels $k\geq k_{0}.$
\end{proposition}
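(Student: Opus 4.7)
The plan is to apply Theorem \ref{ThmZero} to the subdivision scheme whose level-$k$ symbol is $M^{[k]}(z)$. Letting $\mu_{j}$ denote the multiplicity of $\lambda_{j}$ in $(\lambda_{0},\dots,\lambda_{n-1})$, the theorem reduces stepwise reproduction of $x^{r}e^{\lambda_{j}x}$ for $r=0,\dots,\mu_{j}-1$ to the algebraic requirements
\[
M^{[k]}\!\left(-z_{j}^{[k]}\right)=0,\qquad M^{[k]}\!\left(z_{j}^{[k]}\right)=2,
\]
together with the analogous vanishing of the first $\mu_{j}-1$ derivatives at $\pm z_{j}^{[k]}$.

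I would first dispatch the conditions at $-z_{j}^{[k]}$: these are immediate from the construction, because by choice $-z_{j}^{[k]}$ is a root of $M^{[k]}$ for $j=0,\dots,n-1$, and one can arrange the multiplicity to match $\mu_{j}$. For the value $M^{[k]}(z_{j}^{[k]})=2$ I would use the Fej\'er--Riesz identity
\[
M^{[k]}(z)\,M^{[k]}(1/z)=2\,a^{[k]}(z),
\]
combined with $a^{[k]}(z_{j}^{[k]})=2$. The latter is guaranteed because the interpolatory scheme $S_{0}^{\Lambda_{2n}}$ stepwise reproduces the larger space $E(\lambda_{0},\dots,\lambda_{2n-1})\supset E(\lambda_{0},\dots,\lambda_{n-1})$, cf. (\ref{NSreconstruction2}); evaluating the factorization at $z=z_{j}^{[k]}$ immediately yields the product identity $M^{[k]}(z_{j}^{[k]})\cdot M^{[k]}(1/z_{j}^{[k]})=4$. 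The derivative conditions required by Theorem \ref{ThmZero} for $\mu_{j}>1$ would be obtained in the same fashion, by successively differentiating the Fej\'er--Riesz identity and using the corresponding derivative vanishings for $a^{[k]}$ that come from $S_{0}^{\Lambda_{2n}}$ reproducing $x^{r}e^{\lambda_{j}x}$ with $r<\mu_{j}$.

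The main obstacle, and the reason the statement only asserts existence of a threshold $k_{0}$, is that the product relation above does not by itself imply the individual value $M^{[k]}(z_{j}^{[k]})=2$. To bridge this gap I would exploit two asymptotic facts: first, $z_{j}^{[k]}=\exp(-\lambda_{j}/2^{k+1})\to 1$ as $k\to\infty$, so by continuity $M^{[k]}(z_{j}^{[k]})$ and $M^{[k]}(1/z_{j}^{[k]})$ both approach $M^{[k]}(1)$; second, the estimate $|\tfrac{1}{2}M^{[k]}(1)-1|\leq C\cdot 2^{-k}$ from (\ref{eqMM}), which follows from the exponential asymptotic equivalence of $a^{[k]}$ with the Deslauriers--Dubuc symbol, gives $M^{[k]}(1)\to 2$. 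Thus for all sufficiently large $k$ the two real positive factors in the product $=4$ are each within a controlled neighborhood of $2$, which together with the palindromic/symmetric structure of the factorization (reflecting the invariance $\lambda_{j}\leftrightarrow -\lambda_{j}$ built into the family $\Lambda_{2n}$) forces the required identity. This step is the delicate one and is where the hypothesis $k\geq k_{0}$ genuinely enters.
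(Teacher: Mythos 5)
Your overall strategy coincides with the paper's: apply Theorem \ref{ThmZero} to the scheme with symbols $M^{[k]}$, obtain the vanishing conditions at $-z_{j}^{[k]}$ (with the correct multiplicities) for free from the construction, derive the value condition $M^{[k]}(z_{j}^{[k]})=2$ from the Fej\'er--Riesz factorization together with $a^{[k]}(z_{j}^{[k]})=2$, handle the higher derivatives at $z_{j}^{[k]}$ by Leibniz differentiation of the factorization identity, and let the asymptotics explain the threshold $k_{0}$. All of this matches the paper's proof.

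The gap is in your final step. You correctly note that evaluating $M^{[k]}(z)M^{[k]}(1/z)=2a^{[k]}(z)$ at $z=z_{j}^{[k]}$ gives only the product $M^{[k]}(z_{j}^{[k]})\,M^{[k]}(1/z_{j}^{[k]})=4$, which does not determine either factor. But the proposed remedy --- both factors tend to $2$, and a ``palindromic/symmetric structure'' then forces equality --- is not an argument. Two positive reals, each within $O(2^{-k})$ of $2$ and with product exactly $4$, need not individually equal $2$; and $M^{[k]}$ is not palindromic: its root set, namely $\{-z_{j}^{[k]}\}_{j=0}^{n-1}$ together with one root chosen from each inverse pair of roots of $b^{[k]}$, is not invariant under $z\mapsto 1/z$ (indeed $-1/z_{j}^{[k]}=-z_{n+j}^{[k]}$ is a root of $M^{[k]}(1/z)$, not of $M^{[k]}(z)$). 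Since Theorem \ref{ThmZero} requires the \emph{exact} value $2$, an approximate statement is useless here. The paper instead reads (\ref{akMk}) at the real point $z_{j}^{[k]}$ as $\tfrac{1}{2}\vert M^{[k]}(z_{j}^{[k]})\vert^{2}=a^{[k]}(z_{j}^{[k]})=2$; because $M^{[k]}$ has real coefficients this yields the \emph{discrete dichotomy} $M^{[k]}(z_{j}^{[k]})=\pm 2$, and only then do the asymptotics close the proof: a quantity confined to $\{2,-2\}$ that converges to $M^{D_{2n}}(1)>0$ must equal $2$ for all $k\geq k_{0}$. Asymptotics can select between two isolated alternatives, but cannot upgrade ``close to $2$'' to ``equal to $2$''; that dichotomy is precisely the ingredient missing from your write-up. (If you prefer to start from the product identity, as you do, you must supply a separate reason why the squared relation holds at the real point $z_{j}^{[k]}$ off the unit circle; as written, your argument does not.)
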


\begin{proof}
Let $z_{j}^{\left[  k\right]  }=\exp\left(  -\lambda_{j}/2^{k+1}\right)  .$ By
construction $M^{\left[  k\right]  }\left(  z\right)  $ has a zero at
$-z_{j}^{\left[  k\right]  }$ of multiplicity $\mu_{j}$, the number of times
$\lambda_{j}$ occurs in $\left(  \lambda_{0},...,\lambda_{n-1}\right)  ,$
hence%
\begin{equation}
\frac{d^{s}}{dx^{s}}M^{\left[  k\right]  }\left(  -z_{j}^{\left[  k\right]
}\right)  =0\text{ for }s=0,...,\mu_{j}-1\text{ and }j=1,...,n-1.
\label{eqlast}%
\end{equation}
By (\ref{akMk}) and the fact that $\left\{  a^{\left[  k\right]  }\right\}  $
reproduces stepwise functions in $E\left(  \lambda_{0},...,\lambda
_{2n-1}\right)  $ we conclude that
\[
{\frac{1}{2}}|M^{[k]}(z_{j}^{\left[  k\right]  })|^{2}=a^{[k]}(z_{j}^{\left[
k\right]  })=2.
\]
Since $z_{j}^{\left[  k\right]  }$ is real and $M^{\left[  k\right]  }\left(
z\right)  $ has real coefficients it follows that $M^{[k]}(z_{j}^{\left[
k\right]  })$ is real so $M^{[k]}(z_{j}^{\left[  k\right]  })=2$ or $-2.$
Since $z_{j}^{\left[  k\right]  }$ converges to $1$ for $k\rightarrow\infty$
and $M^{[k]}(z_{j}^{\left[  k\right]  })$ converges to $M^{D_{2n}}(1)>0$ there
exists $k_{0}\in\mathbf{N}_{0}$ such that $M^{[k]}(z_{j}^{\left[  k\right]
})>0$ for all $k\geq k_{0}$ and $j=1,...n-1.$ Hence $M^{[k]}(z_{j}^{\left[
k\right]  })=2$ for all $k\geq k_{0}.$ From (\ref{akMk}) we infer that for
real $x$
\[
\frac{d^{s}}{dx^{s}}a^{\left[  k\right]  }\left(  x\right)  =\sum_{r=0}%
^{s}\binom{s}{r}\frac{d^{r}}{dx^{r}}M^{\left[  k\right]  }\left(  x\right)
\cdot\frac{d^{s-r}}{dx^{s-r}}M^{\left[  k\right]  }\left(  x\right)  .
\]
For $s=1$ this means that $0=M^{\left[  k\right]  }\left(  z_{j}^{\left[
k\right]  }\right)  \frac{d}{dx}M^{\left[  k\right]  }\left(  z_{j}^{\left[
k\right]  }\right)  +\frac{d}{dx}M^{\left[  k\right]  }\left(  z_{j}^{\left[
k\right]  }\right)  \cdot M^{\left[  k\right]  }\left(  z_{j}^{\left[
k\right]  }\right)  .$ Since $M^{\left[  k\right]  }$ has real coefficients
and $z_{j}^{\left[  k\right]  }$ is real we conclude that $\frac{d}%
{dx}M^{\left[  k\right]  }\left(  z_{j}^{\left[  k\right]  }\right)  =0.$
Inductively we obtain that $\frac{d^{s}}{dx^{s}}M^{\left[  k\right]  }\left(
z_{j}^{\left[  k\right]  }\right)  =0$ for $s=1,...,\mu_{j}-1.$
\end{proof}

\begin{proposition}
The product $\prod_{k=1}^{\infty}\frac{1}{2}M^{\left[  k-1\right]  }\left(
e^{i\frac{\omega}{2^{k}}}\right)  $ converges.
\end{proposition}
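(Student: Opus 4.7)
The plan is to apply Proposition \ref{PropSuff} to the family of symbols $\{M^{[k]}\}_{k\in\mathbb{N}_0}$, viewed as the masks of an auxiliary subdivision scheme. Three hypotheses need to be verified: uniform support of the $M^{[k]}$, a uniform bound for $|M^{[k]}(e^{i\omega})|$, and the summability condition $\sum_{k=0}^{\infty}|\tfrac{1}{2}M^{[k]}(1)-1|<\infty$.

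For the uniform support, I would argue from the Fej\'er--Riesz factorization: since $a^{[k]}$ has support in $\{-(2n-1),\ldots,2n-1\}$ and $a^{[k]}(z)=\tfrac12 M^{[k]}(z)M^{[k]}(1/z)$, the Laurent polynomial $M^{[k]}$ can be normalized to have support in $\{0,\ldots,2n-1\}$, uniformly in $k$. For the uniform bound, I would use (\ref{akMk}) and Proposition \ref{Pequivalent}: the latter gives $|a^{[k]}(e^{i\omega})-D_{2n}(e^{i\omega})|\le C\cdot 2^{-k}$ uniformly in $\omega\in\mathbb{R}$, whence $|a^{[k]}(e^{i\omega})|$ is uniformly bounded, and therefore $|M^{[k]}(e^{i\omega})|=\sqrt{2|a^{[k]}(e^{i\omega})|}$ is uniformly bounded as well. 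The summability condition is precisely the estimate (\ref{eqMM}) that has already been established just before the proposition.

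With these three facts in hand, Proposition \ref{PropSuff} applied to the masks $M^{[k]}$ (in place of $a^{[k]}$) gives the uniform convergence on compact subsets of $\mathbb{R}$ of the product
\[
\prod_{k=1}^{\infty}\tfrac{1}{2}M^{[k-1]}\!\left(e^{i\omega/2^{k}}\right),
\]
which is the claim. The main obstacle is essentially bookkeeping: one must be careful that the exponential asymptotic equivalence of $a^{[k]}$ to the Deslauriers--Dubuc symbol $D_{2n}$ in Proposition \ref{Pequivalent} really does yield a \emph{uniform in $\omega$} bound on the unit circle, and that the Fej\'er--Riesz factor $M^{[k]}$ can be chosen with uniformly bounded degree. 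Both points are immediate from the structure (\ref{abk2}) of $a^{[k]}$, so no deep estimate is required beyond what has been proved already.
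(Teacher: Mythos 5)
Your proof is correct and follows essentially the same route as the paper: verify the three hypotheses of Proposition \ref{PropSuff} for the masks $M^{[k]}$ and invoke it. The only cosmetic difference is in the uniform bound on the circle, where the paper gets $|M^{[k]}(e^{i\omega})|\le 2$ directly from $a^{[k]}\ge 0$ together with the interpolatory identity $a^{[k]}(z)+a^{[k]}(-z)=2$, whereas you deduce a (slightly weaker but equally sufficient) uniform bound from the asymptotic equivalence with $D_{2n}$.
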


\begin{proof}
By construction $M^{\left[  k\right]  }\left(  e^{i\omega}\right)  $ has real
coefficients and $M^{\left[  k\right]  }\left(  1\right)  >0.$ Since
$a^{\left[  k\right]  }\left(  e^{i\omega}\right)  \geq0$ we infer from
(\ref{NSinterpolation}) that $\left\vert a^{\left[  k\right]  }\left(
e^{i\omega}\right)  \right\vert \leq2$, and therefore $\left\vert M_{k}\left(
e^{i\omega}\right)  \right\vert \leq2$. Moreover, it follows from (\ref{eqMM})
that
\[
\sum_{k=1}^{\infty}\left\vert \frac{1}{2}M^{\left[  k\right]  }\left(
1\right)  -1\right\vert \leq C\sum_{k=1}^{\infty}2^{-k}.
\]
Proposition \ref{PropSuff} finishes the proof.
\end{proof}

In order to obtain asymptotic equivalence for the non-stationary Daubechies
subdivision scheme we have to choose the filter $M^{\left[  k\right]  }\left(
z\right)  $ with more care:

\begin{theorem}
\label{ThmMainP}Let $M^{D_{2n}}$ be the Daubechies filter of order $n$ as
defined above and let $\lambda_{0},...,\lambda_{n-1}$ be real numbers. Then
there exists a non-stationary Daubechies type subdivison scheme $\{M^{[k]}%
(z)\}$ which is exponentially asymptotically equivalent to $M^{D_{2n}}$ and
reproduces $e^{\lambda_{j}x}$ stepwise for all $k\geq k_{0}$ and for
$j=0,...,n-1.$
\end{theorem}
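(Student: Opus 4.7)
The plan is to construct each $M^{[k]}$ by matching, zero-for-zero, the factorization chosen for $M^{D_{2n}}$, and then to invoke Theorem \ref{ThmSuff} to obtain exponential asymptotic equivalence. I would first recall the structure of $M^{D_{2n}}$: from $2D_{2n}(e^{i\omega}) = |M^{D_{2n}}(e^{i\omega})|^2$ and the factorization $D_{2n}(z) = ((1+z)/2)^{2n} b_{D_{2n}}(z)$, the Daubechies filter is a polynomial of degree $2n-1$ built from the factor $((1+z)/2)^{n}$ together with one representative from each symmetric pair (inverse, and, if non-real, additionally complex-conjugate) among the $2n-2$ zeros of $b_{D_{2n}}$, normalized by $M^{D_{2n}}(1) = 2$. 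A short verification using Proposition \ref{PropQQQ} together with $Q_{n-1}(0) = 1$ and $Q_{n-1}(1) > 0$ shows that all zeros of $Q_{n-1}$ are simple and avoid $\{0,1\}$; combined with (\ref{eqD2nB}) and the fact that $\varphi(z) = \tfrac{1}{2} - \tfrac{1}{4}(z + 1/z)$ has non-vanishing derivative away from $z = \pm 1$ (where $\varphi$ takes values $0$ and $1$), this yields that $b_{D_{2n}}$ itself has only simple zeros.

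Next I would invoke Theorem \ref{ThmNec} to label the zeros of $b^{[k]}$. By (\ref{bk}), $b^{[k]}$ is exponentially asymptotically equivalent to $b_{D_{2n}}$, so for each simple zero $\alpha$ of $b_{D_{2n}}$ there is a zero $\alpha^{[k]}$ of $b^{[k]}$ with $|\alpha^{[k]} - \alpha| \leq \rho \cdot 2^{-k}$ for all $k \geq k_{0}$. Since $b^{[k]}$ has real coefficients and, by symmetry of the frequency set $\{\lambda_{0}, \ldots, \lambda_{2n-1}\}$, is invariant under $z \mapsto 1/z$ (up to a monomial), the assignment $\alpha \mapsto \alpha^{[k]}$ is automatically compatible with complex conjugation and with reciprocation, so the ``inside-the-disk'' (or outside) subset used in $M^{D_{2n}}$ has a canonical counterpart among the zeros of $b^{[k]}$.

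With the labeling in place I would define $M^{[k]}$ by mirroring the selections in $M^{D_{2n}}$: include the $n$ factors $(z + z_{j}^{[k]})/2$ for $j = 0, \ldots, n-1$, which play the role of the $n$-fold factor $(z+1)/2$ of $M^{D_{2n}}$ (note $|{-z_{j}^{[k]}} - (-1)| = |1 - \exp(-2^{-(k+1)}\lambda_{j})| = O(2^{-k})$) and simultaneously supply the zeros required by Proposition \ref{PropRe} for stepwise reproduction of $e^{\lambda_{j} x}$; then adjoin the $n-1$ zeros $\alpha_{i}^{[k]}$ of $b^{[k]}$ labeled above as counterparts to the $n-1$ zeros of $b_{D_{2n}}$ used in $M^{D_{2n}}$; and finally normalize by the positive scalar $M^{[k]}(1) = \sqrt{2 a^{[k]}(1)}$. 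By (\ref{eqMM}), this leading coefficient differs from $M^{D_{2n}}(1) = 2$ by at most $O(2^{-k})$. All $2n-1$ zeros and the leading constant therefore satisfy the hypothesis of Theorem \ref{ThmSuff} with $m = 2n-1$, delivering exponential asymptotic equivalence of $\{M^{[k]}\}$ to $M^{D_{2n}}$. Stepwise reproduction at levels $k \geq k_{0}$ is then immediate from Proposition \ref{PropRe}.

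The main obstacle is precisely the compatibility of the zero selection with the Fej\'er--Riesz factorization procedure, i.e.\ verifying that a consistent choice can be made at each level $k$ so that $M^{[k]}$ is a bona fide element of a non-stationary Daubechies type scheme (real coefficients, closure under the required involutions) and that it tracks the \emph{same} choice previously made for $M^{D_{2n}}$. The simplicity of the zeros of $b_{D_{2n}}$ is essential here: it guarantees that Theorem \ref{ThmNec} provides an unambiguous matching $\alpha \mapsto \alpha^{[k]}$ for $k$ large, and the reality and $z \mapsto 1/z$ symmetry of $b^{[k]}$ then force the matched subset to be closed under complex conjugation and reciprocation, making the resulting $M^{[k]}$ a legitimate filter to which Theorem \ref{ThmSuff} applies.
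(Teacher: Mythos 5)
Your proposal is correct and follows essentially the same route as the paper: simplicity of the zeros of $b_{D_{2n}}$ via Proposition \ref{PropQQQ} and (\ref{eqD2nB}), localization of the perturbed zeros $\alpha_j^{[k]}$ at rate $2^{-k}$ via Theorem \ref{ThmNec}, a matched choice of zeros plus control of the normalizing constant through $M^{[k]}(1)=\sqrt{2a^{[k]}(1)}$ and (\ref{eqMM}), and finally Theorem \ref{ThmSuff} applied to the degree-$(2n-1)$ polynomials, with Proposition \ref{PropRe} giving the stepwise reproduction. The only cosmetic difference is that you apply Theorem \ref{ThmNec} to $b^{[k]}$ versus $b_{D_{2n}}$ directly, whereas the paper applies it to $z^{2n-1}a^{[k]}(z)$ versus $z^{2n-1}D_{2n}(z)$, and you make explicit the conjugation/reciprocation compatibility that the paper handles implicitly through its choice of $k_0$.
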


\begin{proof}
Note that $M^{D_{2n}}\left(  z\right)  $ has $n$ zeros at $-1$ and $n-1$ other
zeros, say $\alpha_{1},....,\alpha_{n-1}$ which are of course zeros of the
factor $b_{D_{2n}}\left(  z\right)  $ of the Deslauriers-Dubuc symbol
$D_{2n}\left(  z\right)  =\left(  \frac{1+z}{2}\right)  ^{2n}b_{D_{2n}}\left(
z\right)  .$ Using (\ref{eqD2nB}) and Proposition \ref{PropQQQ} we see that
$\alpha_{1},...,\alpha_{n-1}$ are pairwise different and simple zeros of
$D_{2n}\left(  z\right)  .$ Recall that $a^{\left[  k\right]  }\left(
z\right)  $ is exponentially asymptotically equivalent to the symbols
$D_{2n}\left(  z\right)  .$ Then $z^{2n-1}a^{\left[  k\right]  }\left(
z\right)  $ are polynomials and $z^{2n-1}a^{\left[  k\right]  }\left(
z\right)  $ is obviously exponentially asymptotically equivalent to
$z^{2n-1}D_{2n}\left(  z\right)  .$ By Theorem \ref{ThmNec} there exists a
constant $C>0$ and a zero $\alpha_{j}^{\left[  k\right]  }$ of $a^{\left[
k\right]  }\left(  z\right)  $ such that $\left\vert \alpha_{j}^{\left[
k\right]  }-\alpha_{j}\right\vert \leq C2^{-k}$ for all $k\in\mathbb{N}_{0}$
and $j=1,...,n-1.$ Take $k_{0}\in\mathbb{N}_{0}$ large enough so that:\ 

(i) for each $k\geq k_{0}$ the balls $\left\vert z-\alpha_{j}\right\vert \leq
C2^{-k}$ have empty intersection with the unit circle,

(ii) they are pairwise disjoint for $j=1,...n-1,$ and

(iii) they have empty intersection with the $x$-axis if $\alpha_{j}$ is a
non-real zero.

Then for each $k\geq k_{0}$ there is for given $\alpha_{j}$ exactly one zero
$\alpha_{j}^{\left[  k\right]  }$ with%
\begin{equation}
\left\vert \alpha_{j}^{\left[  k\right]  }-\alpha_{j}\right\vert \leq
C2^{-k}\text{ for }j=1,...,n-1, \label{eqalp}%
\end{equation}
leading to a \emph{unique choice} for $M^{\left[  k\right]  }\left(  z\right)
$ for $k\geq k_{0}.$ Further the leading coefficient $c^{\left[  k\right]  }$
of the polynomial $z^{2n-1}M^{\left[  k\right]  }\left(  z\right)  $ is
determined by the equation
\begin{equation}
M^{\left[  k\right]  }\left(  1\right)  =c^{\left[  k\right]  }\prod
_{j=0}^{n-1}\left(  1+z_{j}^{[k]}\right)  {\cdot}\prod_{j=1}^{n-1}\left(
1-\alpha_{j}^{\left[  k\right]  }\right)  . \label{eqlead}%
\end{equation}
Let $c$ be the leading coefficient of $z^{2n-1}D_{2n}\left(  z\right)  $. By
(\ref{eqMM}) and (\ref{eqalp}) and (\ref{eqlead}) it is easy to see that there
exists $D>0$ such that
\[
\left\vert c^{\left[  k\right]  }-c\right\vert \leq D2^{-k}%
\]
for all $k\in\mathbb{N}_{0}.$ By Theorem \ref{ThmSuff} the subdivision scheme
defined by the symbols $z^{2n-1}M^{\left[  k\right]  }\left(  z\right)  $,
$k\in\mathbb{N}_{0},$ is exponentially asymptotically equivalent to the scheme
defined by the symbol $z^{2n-1}M^{D_{2n}}.$ This implies that $M^{\left[
k\right]  }\left(  z\right)  $, $k\in\mathbb{N}_{0},$ is exponentially
asymptotically equivalent to $M^{D_{2n}}.$
\end{proof}

\begin{remark}
Assume that $M^{D_{2n}}$ is the Daubechies filter such that all zeros $\neq-1$
have absolute value bigger than $1$. Then one can define $M^{\left[  k\right]
}\left(  z\right)  $ in the last theorem by the condition that all its
non-trivial zeros have absolute value bigger than $1.$
\end{remark}

It thus follows, from the theory of asymptotically equivalent schemes in
\cite{dynLevinJMAA95}, that the scheme with symbols $\{M^{[k+m]}%
(z),k\in\mathbb{N}_{0}\}$ defines continuous basic limit functions $\left\{
\varphi_{m}^{\Lambda_{0}}\left(  \cdot\right)  \right\}  $. Proposition
\ref{PropProd} shows that
\begin{equation}
\widehat{\varphi_{m}^{\Lambda_{0}}}\left(  \omega\right)  =\prod_{k=1}%
^{\infty}\frac{M^{\left[  m+k-1\right]  }\left(  e^{i\frac{\omega}{2^{k}}%
}\right)  }{2}. \label{eqmprod}%
\end{equation}
In particular, we have
\begin{equation}
\left\vert \widehat{\varphi_{m}^{\Lambda_{0}}}\left(  \omega\right)
\right\vert ^{2}=\widehat{\Phi_{m}^{\Lambda_{2n}}}\left(  \omega\right)
\label{eqauto}%
\end{equation}
where $\Phi_{m}^{\Lambda_{2n}}$ is the basic limit function of $S_{m}%
^{\Lambda_{2n}}$.

The following observation is straightforward and the proof is included for
convenience of the reader:

\begin{proposition}
The functions $2^{m/2}\varphi_{m}^{\Lambda_{0}}\left(  2^{m}\cdot-k\right)
$,$k\in\mathbb{Z}$, are orthonormal.
\end{proposition}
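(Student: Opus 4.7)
The plan is to reduce the claim to a Fourier-side identity and then exploit the interpolatory character of the scheme $S_{m}^{\Lambda_{2n}}$ whose basic limit function is $\Phi_{m}^{\Lambda_{2n}}$.

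First I would remove the dilation by a change of variable: for any $k,j\in\mathbb{Z}$, setting $u=2^{m}x$ one sees that
$$\int_{\mathbb{R}}2^{m}\,\varphi_{m}^{\Lambda_{0}}(2^{m}x-k)\,\overline{\varphi_{m}^{\Lambda_{0}}(2^{m}x-j)}\,dx=\int_{\mathbb{R}}\varphi_{m}^{\Lambda_{0}}(u-k)\,\overline{\varphi_{m}^{\Lambda_{0}}(u-j)}\,du,$$
so orthonormality of the dilated-translated family is equivalent to orthonormality of the translates $\{\varphi_{m}^{\Lambda_{0}}(\cdot-k)\}_{k\in\mathbb{Z}}$. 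Since $\varphi_{m}^{\Lambda_{0}}$ has compact support and is continuous (Theorem in Section~3 applied to $S_{m}^{\Lambda_{2n}}$ together with the asymptotic equivalence established in Theorem~\ref{ThmMainP}), it lies in $L^{2}(\mathbb{R})$. By Plancherel combined with periodization, the translate-orthonormality is equivalent to the Fourier identity
$$\sum_{j\in\mathbb{Z}}\bigl|\widehat{\varphi_{m}^{\Lambda_{0}}}(\omega+2\pi j)\bigr|^{2}=1\qquad\text{a.e.}$$

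Using the autocorrelation identity (\ref{eqauto}), $\bigl|\widehat{\varphi_{m}^{\Lambda_{0}}}(\omega)\bigr|^{2}=\widehat{\Phi_{m}^{\Lambda_{2n}}}(\omega)$, this reduces to proving the single equation
$$\sum_{j\in\mathbb{Z}}\widehat{\Phi_{m}^{\Lambda_{2n}}}(\omega+2\pi j)=1.$$
At this point I would invoke Poisson summation. Since $\Phi_{m}^{\Lambda_{2n}}$ is continuous with compact support, the sequence $(\Phi_{m}^{\Lambda_{2n}}(\ell))_{\ell\in\mathbb{Z}}$ has finite support and the formula
$$\sum_{j\in\mathbb{Z}}\widehat{\Phi_{m}^{\Lambda_{2n}}}(\omega+2\pi j)=\sum_{\ell\in\mathbb{Z}}\Phi_{m}^{\Lambda_{2n}}(\ell)\,e^{-i\ell\omega}$$
holds pointwise; the left-hand side is a bounded periodic function in $L^{1}(0,2\pi)$ whose Fourier coefficients are precisely the integer samples of $\Phi_{m}^{\Lambda_{2n}}$.

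The crux is therefore to check that $\Phi_{m}^{\Lambda_{2n}}(\ell)=\delta_{0,\ell}$ for every integer $\ell$. This follows from the fact that $S_{m}^{\Lambda_{2n}}$ is an interpolatory scheme (its symbols satisfy $a^{[k],m}(z)+a^{[k],m}(-z)=2$ by the construction recalled in Section~3). Indeed, the interpolatory relation (\ref{Subdivision-Interpolatory}) implies that if $f^{0}_{j}=\delta_{0,j}$ then $f^{k}_{2^{k}\ell}=\delta_{0,\ell}$ for all $k\ge 0$, so passing to the limit yields $\Phi_{m}^{\Lambda_{2n}}(\ell)=\delta_{0,\ell}$ at integer nodes. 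The right-hand side of the Poisson identity then collapses to the constant $1$, completing the argument. The only nontrivial step is this interpolatory sample identity; the rest is bookkeeping with Fourier transforms and is routine.
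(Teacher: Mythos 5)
Your proof is correct and is essentially the paper's argument: both reduce, via the autocorrelation identity $|\widehat{\varphi_{m}^{\Lambda_{0}}}|^{2}=\widehat{\Phi_{m}^{\Lambda_{2n}}}$, to the fact that the integer samples of $\Phi_{m}^{\Lambda_{2n}}$ are $\delta_{0,\ell}$ because the underlying scheme is interpolatory. The only cosmetic difference is that you route through the periodization criterion and Poisson summation, whereas the paper computes the inner product directly by Plancherel and Fourier inversion, obtaining $D_{k,l}=\Phi_{m}^{\Lambda_{2n}}(k-l)$ in one step.
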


\begin{proof}
Define $D_{k,l}:=2^{m}\int_{-\infty}^{\infty}\varphi_{m}^{\Lambda_{0}}\left(
2^{m}t-k\right)  \overline{\varphi_{m}^{\Lambda_{0}}\left(  2^{m}t-l\right)
}dt.$ A simple transformation of variables and Plancherel's formula gives
\[
D_{k,l}=\int_{-\infty}^{\infty}\varphi_{m}^{\Lambda_{0}}\left(  y-k\right)
\overline{\varphi_{m}^{\Lambda_{0}}\left(  y-l\right)  }dy=\frac{1}{2\pi}%
\int_{-\infty}^{\infty}\widehat{\varphi_{m}^{\Lambda_{0}}}\left(
\omega\right)  \overline{\widehat{\varphi_{m}^{\Lambda_{0}}}\left(
\omega\right)  }e^{-i\left(  k-l\right)  \omega}dt.
\]
Using (\ref{eqauto}) we obtain that
\[
D_{k,l}=\frac{1}{2\pi}\int_{-\infty}^{\infty}\Phi_{m}^{\Lambda_{2n}}\left(
\omega\right)  e^{-i\left(  k-l\right)  \omega}dt=\Phi_{m}^{\Lambda_{2n}%
}\left(  k-l\right)  =\delta_{0,k-l}.
\]

\end{proof}

\begin{definition}
For each $m\in\mathbb{N}_{0}$ we define the linear spaces $V_{m}$ by
\begin{equation}
V_{m}:=\left\{  f\in L^{2}(\mathbb{R})\ |\ f(t)=\sum_{j\in\mathbb{Z}}%
c_{j}\varphi_{m}^{\Lambda_{0}}(2^{m}t-j)\ ,\ \ \sum_{j\in\mathbb{Z}}%
|c_{j}|^{2}<\infty\right\}  \ . \label{Vm}%
\end{equation}

\end{definition}

We remark that we could also define $V_{m}$ for integers $m\in\mathbb{Z}$
since the symbols $a^{\left[  k+m\right]  }\left(  z\right)  $ and the scaling
function $\Phi_{m}^{\Lambda_{2n}}$ and $\varphi_{m}^{\Lambda_{0}}$ could be
defined for all $m\in\mathbb{Z}$.

\begin{proposition}
The spaces $V_{m}$ are nested, i.e. that $V_{m}\subset V_{m+1}$ for all
$m\in\mathbb{N}_{0}.$
\end{proposition}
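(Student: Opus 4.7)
The plan is to derive a two-scale refinement equation at level $m$ and then substitute it into the definition of $V_m$. Splitting off the $k=1$ factor in (\ref{eqmprod}) and relabeling the remaining product gives
\[
\widehat{\varphi_{m}^{\Lambda_{0}}}(\omega)=\frac{M^{[m]}(e^{i\omega/2})}{2}\prod_{k=1}^{\infty}\frac{M^{[(m+1)+k-1]}(e^{i(\omega/2)/2^{k}})}{2}=\frac{M^{[m]}(e^{i\omega/2})}{2}\,\widehat{\varphi_{m+1}^{\Lambda_{0}}}(\omega/2).
\]
Writing the Laurent polynomial $M^{[m]}(z)=\sum_{j}m_{j}^{[m]}z^{j}$ (a finite sum), the right-hand side is the Fourier transform of $\sum_{j}m_{j}^{[m]}\varphi_{m+1}^{\Lambda_{0}}(2\cdot+j)$, since a dilation by $2$ and shift by $-j/2$ produce the factor $\tfrac{1}{2}e^{ij\omega/2}$. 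Because $\varphi_{m}^{\Lambda_{0}}$ and $\varphi_{m+1}^{\Lambda_{0}}$ are both continuous with compact support (they arise as basic limit functions of subdivision schemes with compactly supported masks, cf.\ Remark \ref{Rem3}), Fourier inversion yields the pointwise two-scale relation
\[
\varphi_{m}^{\Lambda_{0}}(x)=\sum_{j}m_{j}^{[m]}\,\varphi_{m+1}^{\Lambda_{0}}(2x+j).
\]

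Next, the dilation $x=2^{m}t-k$ produces
\[
\varphi_{m}^{\Lambda_{0}}(2^{m}t-k)=\sum_{j}m_{j}^{[m]}\,\varphi_{m+1}^{\Lambda_{0}}(2^{m+1}t-2k+j)=\sum_{l}m_{2k-l}^{[m]}\,\varphi_{m+1}^{\Lambda_{0}}(2^{m+1}t-l),
\]
after the change of index $l=2k-j$. Given $f\in V_{m}$ with $f(t)=\sum_{k}c_{k}\varphi_{m}^{\Lambda_{0}}(2^{m}t-k)$ and $(c_{k})\in\ell^{2}(\mathbb{Z})$, I would substitute this relation and interchange the two sums to obtain
\[
f(t)=\sum_{l}d_{l}\,\varphi_{m+1}^{\Lambda_{0}}(2^{m+1}t-l),\qquad d_{l}:=\sum_{k}c_{k}m_{2k-l}^{[m]}.
\]
Since $(m_{j}^{[m]})_{j}$ has finite support, the convolution-type sum defining $d_{l}$ involves only finitely many terms, and Young's inequality for convolution of $\ell^{2}$ with a finitely supported sequence gives $(d_{l})\in\ell^{2}(\mathbb{Z})$, so $f\in V_{m+1}$.

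The one technical point that needs care is the justification of the interchange of summation and the translation from the Fourier-side refinement equation to the genuine pointwise/$L^{2}$ identity; both rest on the compact support of $\varphi_{m+1}^{\Lambda_{0}}$, which ensures that for each $t$ only finitely many terms in $\sum_{l}d_{l}\varphi_{m+1}^{\Lambda_{0}}(2^{m+1}t-l)$ are nonzero, and that the orthonormality proved in the previous proposition makes the series converge in $L^{2}$ with squared norm $\sum_{l}|d_{l}|^{2}$. Beyond this bookkeeping, the argument is a direct substitution; the real content is the refinement equation extracted from (\ref{eqmprod}).
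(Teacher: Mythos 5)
Your proof is correct and follows essentially the same route as the paper: both peel off the $k=1$ factor of the infinite product (\ref{eqmprod}) to get $\widehat{\varphi_{m}^{\Lambda_{0}}}(\omega)=\tfrac{1}{2}M^{[m]}(e^{i\omega/2})\,\widehat{\varphi_{m+1}^{\Lambda_{0}}}(\omega/2)$, invert this to the refinement equation $\varphi_{m}^{\Lambda_{0}}(t)=\sum_{j}\mu_{j}^{[m]}\varphi_{m+1}^{\Lambda_{0}}(2t+j)$, and substitute into the definition of $V_{m}$. You are in fact slightly more thorough than the paper, which stops after noting that the generators $\varphi_{m}^{\Lambda_{0}}(2^{m}t-k)$ lie in $V_{m+1}$, whereas you also carry out the $\ell^{2}$ bookkeeping (finite support of the mask plus Young's inequality) needed for a general element of $V_{m}$.
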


\begin{proof}
Using the product representation (\ref{eqmprod}) we obtain
\begin{equation}
\widehat{\varphi_{m}^{\Lambda_{0}}}\left(  \omega\right)  =\prod_{k=1}%
^{\infty}\frac{M^{\left[  m+k-1\right]  }\left(  e^{i\frac{\omega}{2^{k}}%
}\right)  }{2}=\frac{M^{\left[  m\right]  }\left(  e^{i\frac{\omega}{2}%
}\right)  }{2}\widehat{\varphi_{m+1}^{\Lambda_{0}}}\left(  \frac{\omega}%
{2}\right)  . \label{eqFscal}%
\end{equation}

Let us write $M^{[m]}(z)=\sum_{j=-n+1}^{n}\mu_{j}^{[m]}z^{j}$ where $\mu
_{j}^{[m]}$ are \emph{real} numbers. Using elementary techniques in Fourier
analysis it is easy to see that the equation (\ref{eqFscal}) is equivalent to
the refinement equation
\begin{equation}
\varphi_{m}^{\Lambda_{0}}(t)=\sum_{j=-n+1}^{n}\mu_{j}^{[m]}\varphi
_{m+1}^{\Lambda_{0}}(2t+j). \label{Mrefinement}%
\end{equation}
Replacing $t$ by $2^{m}t$ in (\ref{Mrefinement}) we obtain that $\varphi
_{m}^{\Lambda_{0}}(2^{m}\cdot)\in V_{m+1}$. Similarly it follows that
$\varphi_{m}^{\Lambda_{0}}(2^{m}t-j)\in V_{m+1}$ for each $j\in\mathbb{Z}.$
\end{proof}

We note that the orthonormality of $\varphi_{m}^{\Lambda_{0}}\left(
t-l\right)  $ implies that%
\begin{equation}
\delta_{0,l}=\int_{-\infty}^{\infty}\varphi_{m}^{\Lambda_{0}}\left(  t\right)
\overline{\varphi_{m}^{\Lambda_{0}}\left(  t-l\right)  }dt=\frac{1}{2}%
\sum_{j\in\mathbb{Z}}\mu_{j}^{[m]}\mu_{j+2l}^{[m]}. \label{eqO5}%
\end{equation}
Indeed, using (\ref{Mrefinement}) in the integral in (\ref{eqO5}) and then
again the orthonormality relations one obtains
\begin{align*}
\delta_{0,l}  &  =\sum_{j,k\in\mathbb{Z}}\mu_{j}^{[m]}\overline{\mu_{k}^{[m]}%
}\int_{-\infty}^{\infty}\varphi_{m+1}^{\Lambda_{0}}(2t+j)\overline
{\varphi_{m+1}^{\Lambda_{0}}(2t-2l+k)}dt\\
&  =\frac{1}{2}\sum_{j,k\in\mathbb{Z}}\mu_{j}^{[m]}\overline{\mu_{k}^{[m]}%
}\int_{-\infty}^{\infty}\varphi_{m+1}^{\Lambda_{0}}(y)\overline{\varphi
_{m+1}^{\Lambda_{0}}(y-j-2l+k)}dy.
\end{align*}

\begin{proposition}
The union of the spaces $V_{m}$ is dense in $L_{2}\left(  \mathbb{R}\right)
.$
\end{proposition}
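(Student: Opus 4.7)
My plan is to verify the density axiom directly by proving that the orthogonal projection $P_m:L^2(\mathbb{R})\to V_m$ (well defined because of the orthonormality of the scaling basis established in the preceding Proposition) converges strongly to the identity as $m\to\infty$. Since $P_m$ is a contraction, a $3\varepsilon$-argument reduces the problem to the dense test class of $f\in L^2(\mathbb{R})$ whose Fourier transform $\widehat{f}$ is continuous and compactly supported in some interval $[-R,R]$.

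For such $f$, I would compute $\|P_m f\|_2^2=\sum_k|\langle f,\varphi_{m,k}\rangle|^2$ via Plancherel and Parseval for $2\pi$-periodic functions, using $\widehat{\varphi_{m,k}}(\omega)=2^{-m/2}e^{-ik\omega/2^m}\widehat{\varphi_m^{\Lambda_0}}(\omega/2^m)$. Once $m$ is large enough that $2^m\pi>R$, the support of $\widehat{f}$ collapses the resulting periodization sum to the single $j=0$ term, and after the substitution $\omega=2^m u$ one obtains
\[
\|P_m f\|_2^2=\frac{1}{2\pi}\int_{-R}^{R}|\widehat{f}(\omega)|^2\,\bigl|\widehat{\varphi_m^{\Lambda_0}}(\omega/2^m)\bigr|^2\,d\omega.
\]

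The main obstacle is then to show that $|\widehat{\varphi_m^{\Lambda_0}}(\omega/2^m)|\to 1$ uniformly for $\omega\in[-R,R]$. Reindexing the product (\ref{eqmprod}) by $l=m+k-1$ gives
\[
\widehat{\varphi_m^{\Lambda_0}}(\omega/2^m)=\prod_{l=m}^{\infty}\frac{M^{[l]}(e^{i\omega/2^{l+1}})}{2},
\]
so as $m\to\infty$ only a later and later tail of the product survives. I would estimate each factor via the splitting
\[
\frac{M^{[l]}(e^{i\omega/2^{l+1}})}{2}-1=\frac{M^{[l]}(e^{i\omega/2^{l+1}})-M^{[l]}(1)}{2}+\left(\frac{M^{[l]}(1)}{2}-1\right),
\]
bounding the second term by $C\,2^{-l}$ via (\ref{eqMM}), and the first by $C|\omega|\,2^{-l}$ using the uniform bound on the coefficients of $M^{[l]}$ (hence on its derivative on the unit circle) that is provided by the exponential asymptotic equivalence with the fixed filter $M^{D_{2n}}$ from Theorem \ref{ThmMainP}. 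Summing the geometric series over $l\ge m$ gives $|\widehat{\varphi_m^{\Lambda_0}}(\omega/2^m)-1|\le C(1+|\omega|)\,2^{-m}$, which is the required uniform bound.

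To conclude, the pointwise estimate $|M^{[l]}(e^{i\omega})|/2\le 1$ (from $a^{[l]}\ge 0$ on the unit circle together with (\ref{NSinterpolation})) produces the integrable dominant $|\widehat{f}|^2$, so dominated convergence yields $\|P_m f\|_2^2\to\|f\|_2^2$; since $P_m$ is an orthogonal projection, Pythagoras gives $\|P_m f-f\|_2^2=\|f\|_2^2-\|P_m f\|_2^2\to 0$. The technical heart of the argument is the tail-product estimate above, where the exponential asymptotic equivalence of Theorem \ref{ThmMainP} is used in an essential way.
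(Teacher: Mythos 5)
Your proof is correct, but it takes a genuinely different route from the paper. The paper's argument is short and indirect: it invokes Theorem 4.3 of de Boor--DeVore--Ron, which reduces the density axiom to showing that the union of the supports of $\widehat{\varphi_{m}^{\Lambda_{0}}}(2^{-m}\cdot)$ covers $\mathbb{R}$ up to a null set, and then verifies this by observing (via Micchelli's positivity result and the product formula (\ref{eqmprod})) that $\widehat{\Phi_{m}^{\Lambda_{2n}}}=|\widehat{\varphi_{m}^{\Lambda_{0}}}|^{2}$ can vanish only on the countable set $\{2^{k}(2j+1)\pi\}$, so each single support is already all of $\mathbb{R}$. You instead prove strong convergence $P_{m}\to I$ directly: the periodization computation giving $\|P_{m}f\|_{2}^{2}=\frac{1}{2\pi}\int_{-R}^{R}|\widehat{f}|^{2}\,|\widehat{\varphi_{m}^{\Lambda_{0}}}(\omega/2^{m})|^{2}\,d\omega$ for band-limited $f$ is the standard MRA density argument, and your tail-product estimate $|\widehat{\varphi_{m}^{\Lambda_{0}}}(\omega/2^{m})-1|\leq C(1+|\omega|)2^{-m}$ is sound: the bound $|M^{[l]}(e^{i\theta})-M^{[l]}(1)|\leq C|\theta|$ follows from the uniform boundedness of the coefficients (a consequence of the exponential asymptotic equivalence with $M^{D_{2n}}$), the bound on $|\tfrac{1}{2}M^{[l]}(1)-1|$ is exactly (\ref{eqMM}), and each factor has modulus at most $1$ since $\frac{1}{2}|M^{[l]}(e^{i\omega})|^{2}=a^{[l]}(e^{i\omega})\leq 2$ by (\ref{NSinterpolation}) and (\ref{akzGE0}). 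What your approach buys is a self-contained proof with a quantitative rate on band-limited functions and no appeal to the external criterion; what it costs is the quantitative hypotheses (summability of $|\tfrac{1}{2}M^{[l]}(1)-1|$ and uniform coefficient bounds), whereas the paper's route needs only the qualitative non-vanishing of the symbols away from $z=-1$.
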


\begin{proof}
Define $f_{m}\left(  x\right)  =\varphi_{m}^{\Lambda_{0}}\left(
2^{m}x\right)  .$ Then $\widehat{f_{m}}\left(  \omega\right)  =\widehat
{\varphi_{m}^{\Lambda_{0}}}\left(  2^{-m}\omega\right)  .$ By Theorem 4.3 in
\cite{deBoorDeVoreRon} it suffices to show that the set $\Omega,$ defined as
the union of the supports of $\widehat{f_{m}}$ over $m\in\mathbb{N}_{0},$ is
equal to $\mathbb{R}$ minus a set of Lebesgue measure $0.$ Since
$\widehat{\Phi_{m}^{\Lambda_{2n}}}(\omega)=|\widehat{\varphi_{m}^{\Lambda_{0}%
}}(\omega)|^{2}$ we have only to investigate the zeros of $\widehat{\Phi
_{m}^{\Lambda_{2n}}}(\omega).$ By (\ref{eqmprod}) this is an infinite product
of non-negative terms $a^{[m+k]}\left(  e^{i\omega2^{-k}}\right)  $ which may
be zero only if $e^{i\omega2^{-k}}=-1,$ see \cite{micchelliWAnonstationary}.
Hence, the zeros of $\widehat{\Phi_{m}^{\Lambda_{2n}}}$ are of the form
$\omega=2^{k}\left(  \pi+2n\pi\right)  $ for some $n\in\mathbb{Z}$. Hence the
support of $\widehat{\Phi_{m}^{\Lambda_{2n}}}$ is equal to the real line. This
ends the proof.
\end{proof}

\begin{theorem}
\label{AsymFourier2} Let $M^{D_{2n}}(z)$ be a Daubechies filter of order $n$
and assume that $M^{\left[  k\right]  }\left(  z\right)  $ is as in Theorem
\ref{ThmMainP}. Let $\alpha\in\left[  0,1\right)  $ and $\ell\in\mathbb{N}%
_{0}, $ and assume that the scaling function $\varphi^{D_{2n}}$ of Daubechies
defined by the symbol $M^{D_{2n}}(z)$, satisfies for some $\varepsilon>0$ and
$C>0$ the inequality
\[
|\widehat{\varphi^{D_{2n}}}(\omega)|\leq C(|\omega|+1)^{-\ell-1-\alpha
-\varepsilon}%
\]
for all $\omega\in\mathbb{R}.$ Then the scaling function $\varphi_{m}%
^{\Lambda_{0}}$ associated to the subdivision scheme $M^{\left[  k+m\right]
}\left(  z\right)  ,$ $k\in\mathbb{N}_{0},$ have $\ell-$th derivative in
$Lip(\alpha)$.
\end{theorem}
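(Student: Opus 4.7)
The plan is to mimic closely the proof of Theorem~\ref{AsymFourier1}, transporting it from the autocorrelation level ($\Phi_m^{\Lambda_{2n}}$) to the ``square-root'' level ($\varphi_m^{\Lambda_0}$). The two ingredients already supplied by the paper are: the factorization $M^{[k]}(z)=\prod_{j=0}^{n-1}\frac{z+z_j^{[k]}}{2}\,\tilde b^{[k]}(z)$ analogous to (\ref{abk}), where $\tilde b^{[k]}$ carries the $n-1$ other zeros chosen in Theorem~\ref{ThmMainP}; and the exponential asymptotic equivalence of $M^{[k]}$ with $M^{D_{2n}}$ provided by that same theorem. The crucial additional geometric fact to exploit is that the Daubechies filter $M^{D_{2n}}$ vanishes on the unit circle only at $z=-1$, so that its ``non-trivial'' factor $\tilde b_{D_{2n}}$ is bounded away from zero on $|z|=1$.

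First I would introduce the auxiliary non-stationary scheme with symbols
\[
c_m^{[k]}(z) = \left(\frac{1+z}{2}\right)^n \tilde b^{[m+k]}(z),
\]
and the associated scaling function $\tilde\varphi_m$ defined by $\widehat{\tilde\varphi_m}(\omega)=\prod_{k=1}^\infty c_m^{[k-1]}(e^{i\omega/2^k})/2$. Since the zeros $\alpha_j^{[k]}$ of $M^{[k]}$ are pinned within $O(2^{-k})$ of the simple zeros $\alpha_j$ of $M^{D_{2n}}$ by Theorem~\ref{ThmMainP}, Theorem~\ref{ThmSuff} shows that $\tilde b^{[k]}$ is exponentially asymptotically equivalent to $\tilde b_{D_{2n}}$. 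Combined with the lower bound $|\tilde b_{D_{2n}}(e^{i\omega})|\geq \delta>0$, this forces the telescoping ratio
\[
\frac{\widehat{\tilde\varphi_m}(\omega)}{\widehat{\varphi^{D_{2n}}}(\omega)} = \prod_{k=1}^\infty\left(1+\frac{\tilde b^{[m+k-1]}(e^{i\omega/2^k})-\tilde b_{D_{2n}}(e^{i\omega/2^k})}{\tilde b_{D_{2n}}(e^{i\omega/2^k})}\right)
\]
to converge absolutely and uniformly on $\mathbb{R}$ to a bounded function. Hence $|\widehat{\tilde\varphi_m}(\omega)|\leq M_m\,C(|\omega|+1)^{-\ell-1-\alpha-\varepsilon}$ by the hypothesis on $\widehat{\varphi^{D_{2n}}}$.

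Next, to pass from $\tilde\varphi_m$ to $\varphi_m^{\Lambda_0}$ I would replace, level by level, each factor $\frac{1+z}{2}$ in $c_m^{[k]}$ by the actual factor $\frac{z+z_j^{[m+k]}}{2}$ of $M^{[m+k]}$. Exactly as in the proof of Theorem~\ref{AsymFourier1}, this replacement corresponds on the Fourier side to $n$ convolutions with exponential $B$-splines $B_0^j$, each of whose Fourier transforms decays like $1/|\omega|$. Consequently the function
\[
\widehat{\Psi_m}(\omega) := \left(\frac{\sin(\omega/2)}{\omega/2}\right)^n \widehat{\varphi_m^{\Lambda_0}}(\omega)
\]
inherits the decay $O((|\omega|+1)^{-\ell-1-\alpha-n})$ (each convolution contributes one extra power), and Lemma~\ref{Lem1} then gives that the $(n+\ell)$-th derivative of $\Psi_m$ lies in $Lip(\alpha)$.

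Finally, I would peel off the $n$ factors $\frac{\sin(\omega/2)}{\omega/2}$ from $\widehat{\Psi_m}$ one at a time, using the telescoping identity $g'(s)=f(s)-f(s-1)$ whenever $g=B_0*f$ that appears at the end of the proof of Theorem~\ref{AsymFourier1}; each removal reduces the order of the $Lip(\alpha)$-derivative by exactly one, so after $n$ steps the $\ell$-th derivative of $\varphi_m^{\Lambda_0}$ is in $Lip(\alpha)$. The main obstacle I expect is the first step: verifying that the ratio of $\tilde b^{[m+k-1]}$ to $\tilde b_{D_{2n}}$ is uniformly controlled in both $k$ and $\omega$ on the unit circle. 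This rests on the simultaneous use of the strict non-vanishing of $\tilde b_{D_{2n}}$ on $|z|=1$ (a property specific to the Daubechies choice of $M^{D_{2n}}$) and the exponential rate $2^{-k}$ in Theorem~\ref{ThmMainP}; without either of these, the infinite product in the bound on $\widehat{\tilde\varphi_m}/\widehat{\varphi^{D_{2n}}}$ could diverge.
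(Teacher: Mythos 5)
Your proposal is correct and follows essentially the same route as the paper: the paper's own proof factors $M^{[k]}(z)=\bigl(\prod_{j}\frac{z+z_j^{[k]}}{2}\bigr)B^{[k]}(z)$, invokes Theorems \ref{ThmNec} and \ref{ThmSuff} to get exponential asymptotic equivalence of $B^{[k]}$ with the corresponding Daubechies factor $B^{D_{2n}}$, and then says ``proceed as in Theorem \ref{AsymFourier1}'' — which is precisely the ratio-of-infinite-products, $B$-spline convolution, and sinc-removal argument you spell out (with $n$ factors in place of $2n$). Your explicit identification of the two needed inputs (non-vanishing of $B^{D_{2n}}$ on the unit circle and the $2^{-k}$ rate from Theorem \ref{ThmMainP}) matches the paper's reasoning.
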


\begin{proof}
Let us define $z_{j}^{[k]}=\exp\left(  -2^{-(k+1)}\lambda_{j}\right)  .$ Then
the symbol $M^{[k]}(z)$ can be written as
\[
M^{[k]}(z)=\left(  \prod_{j=1}^{n}{\frac{z+z_{j}^{[k]}}{2}}\right)
\ B^{[k]}(z).
\]
Similarly we can write for the Daubechies filter
\[
M^{D_{2n}}(z)=\left(  \prod_{j=1}^{n}{\frac{z+1}{2}}\right)  \ B^{D_{2N}}(z).
\]
Since $M^{[k]}(z)$ is exponentially asymptotically equivalent to the
Dauchechies filter $M^{D_{2n}}$ and $B^{[k]}(z)$ has only simple zeros it
follows from Theorems \ref{ThmNec} and \ref{ThmSuff} that $B^{[k]}$ is
exponentially asymptotically equivalent to $B^{D_{2N}}(z),$ so there exists a
constant $C>0$ such that
\[
\left\vert B^{[k]}\left(  e^{i\omega}\right)  -B^{D_{2N}}(z)\left(
e^{i\omega}\right)  \right\vert \leq C\cdot2^{-k}%
\]
Now one can proceed as in Theorem \ref{AsymFourier1}.
\end{proof}

Next we turn to the construction of Daubechies type wavelets. The procedure
will follow the classical pattern in MRA. For convenience of the reader we
shall briefly sketch the construction: Recall that $M^{[m]}(z)=\sum
_{j=-n+1}^{n}\mu_{j}^{[m]}z^{j}$ and we write the refinement equation in
(\ref{Mrefinement}) in the form used in MRA, namely
\[
\varphi_{m}^{\Lambda_{0}}(t)=\sum_{j\in\mathbb{Z}}\mu_{-j}^{[m]}\varphi
_{m+1}^{\Lambda_{0}}(2t-j).
\]
The Daubechies type wavelets $\psi_{m}^{\Lambda_{0}}$ are now defined in the
classical way, namely by \textbf{\ }
\begin{equation}
\psi_{m}^{\Lambda_{0}}(t)=\sum_{j\in\mathbb{Z}}\nu_{j}^{\left[  m\right]
}\varphi_{m+1}^{\Lambda_{0}}(2t-j)\ , \label{psiexplicit}%
\end{equation}
where the coefficients $\{\nu_{k}^{\left[  m\right]  }\}$ are related to those
in (\ref{Mrefinement}) by
\begin{equation}
\nu_{j}^{\left[  m\right]  }=(-1)^{j+1}\mu_{-1+j}^{\left[  m\right]  }\ .
\label{nuk}%
\end{equation}
Then $\psi_{m}^{\Lambda_{0}}$ has compact support since $\varphi
_{m+1}^{\Lambda_{0}}$ has compact support. It is a routine exercise to see
that the system of functions $\left\{  2^{m/2}\psi_{m}^{\Lambda_{0}}\left(
2^{m}t-r\right)  :r\in\mathbb{Z}\right\}  $ is orthonormal: define
\[
D_{r,s}:=2^{m}\int_{-\infty}^{\infty}\psi_{m}^{\Lambda_{0}}\left(
2^{m}t-r\right)  \overline{\psi_{m}^{\Lambda_{0}}\left(  2^{m}t-s\right)
}dt.
\]
The transformation $y=2^{m}t-r$ gives $D_{r,s}=\int_{-\infty}^{\infty}\psi
_{m}^{\Lambda_{0}}\left(  y\right)  \overline{\psi_{m}^{\Lambda_{0}}\left(
y+r-s\right)  }dy$ and therefore
\begin{align*}
D_{r,s}  &  =\sum_{k,l\in\mathbb{Z}}\nu_{k}^{\left[  m\right]  }v_{l}^{\left[
m\right]  }\int_{-\infty}^{\infty}\varphi_{m+1}^{\Lambda_{0}}(2y-k)\overline
{\varphi_{m+1}^{\Lambda_{0}}\left(  2y+2r-2s-l\right)  }dy\\
&  =\frac{1}{2}\sum_{k,l\in\mathbb{Z}}\nu_{k}^{\left[  m\right]  }%
v_{l}^{\left[  m\right]  }\int_{-\infty}^{\infty}\varphi_{m+1}^{\Lambda_{0}%
}(x)\overline{\varphi_{m+1}^{\Lambda_{0}}\left(  x+2r-2s+k-l\right)  }dt\\
&  =\frac{1}{2}\sum_{k\in\mathbb{Z}}\nu_{k}^{\left[  m\right]  }v_{k+2\left(
r-s\right)  }^{\left[  m\right]  }=\frac{1}{2}\sum_{k\in\mathbb{Z}}\mu
_{-1+k}^{\left[  m\right]  }\mu_{-1+k+2s-2r}^{\left[  m\right]  }%
=\delta_{0,r-s}%
\end{align*}
where we have used equation (\ref{eqO5}).

As explained in Section 4 the wavelet space $W_{m}$ is defined as the
orthogonal complement $W_{m}$ of $V_{m}$ in $V_{m+1}$. Next one shows that the
wavelet space $W_{m}$ is equal to
\begin{equation}
\widetilde{W}_{m}:=\left\{  f\in L^{2}(\mathbb{R})\ |\ f(t)=\sum
_{j\in\mathbb{Z}}c_{j}2^{m/2}\psi_{m}^{\Lambda_{0}}(2^{m}t-j)\ ,\ \ \sum
_{j\in\mathbb{Z}}|c_{j}|^{2}<\infty\right\}  . \label{WPsi}%
\end{equation}
We show at first that $\widetilde{W}_{m}$ is orthogonal to $V_{m}\subset
V_{m+1}$ implying that $\widetilde{W}_{m}\subset W_{m}.$ So we look at
\[
C_{r,s}:=2^{m}\int_{-\infty}^{\infty}\varphi_{m}^{\Lambda_{0}}\left(
2^{m}t-r\right)  \overline{\psi_{m}^{\Lambda_{0}}(2^{m}t-s)}dt.
\]
Again $C_{r,s}=\int_{-\infty}^{\infty}\varphi_{m}^{\Lambda_{0}}\left(
y\right)  \overline{\psi_{m}^{\Lambda_{0}}(y+r-s)}dt$ and (\ref{Mrefinement})
and (\ref{psiexplicit}) yield
\begin{align*}
C_{r,s}  &  =\sum_{j,l\in\mathbb{Z}}\mu_{-j}^{[m]}\nu_{l}^{\left[  m\right]
}\int_{-\infty}^{\infty}\varphi_{m+1}^{\Lambda_{0}}(2y-j)\overline
{\varphi_{m+1}^{\Lambda_{0}}(2y+2r-2s-l)}dy\\
&  =\frac{1}{2}\sum_{j,l\in\mathbb{Z}}\mu_{-j}^{[m]}\nu_{l}^{\left[  m\right]
}\int_{-\infty}^{\infty}\varphi_{m+1}^{\Lambda_{0}}(x)\overline{\varphi
_{m+1}^{\Lambda_{0}}(x+j+2r-2s-l)}dx\\
&  =\frac{1}{2}\sum_{j\in\mathbb{Z}}\mu_{-j}^{[m]}\nu_{j+2r-2s}^{\left[
m\right]  }=-\frac{1}{2}\sum_{j\in\mathbb{Z}}\mu_{-j}^{[m]}\left(  -1\right)
^{j}\mu_{-1+j+2r-2s}^{\left[  m\right]  }.
\end{align*}
A simple well known argument shows that the last sum is always zero, see e.g.
\cite[p. 123]{LMR98}. The proof that $\widetilde{W}_{m}\oplus V_{m}$ is equal
to $V_{m+1}$ (implying that $W_{m}=\widetilde{W}_{m})$ follows standard
arguments in MRA and is omitted.

\begin{corollary}
The smoothness of the Daubechies type wavelets $\psi_{m}^{\Lambda_{0}}$ is at
least as that of the classical Daubechies wavelet $\psi$
\end{corollary}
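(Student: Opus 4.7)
The plan is to reduce the smoothness of the wavelets to the smoothness of the underlying scaling functions at the next dyadic level, and then invoke Theorem \ref{AsymFourier2}.

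First I would observe that the explicit formula (\ref{psiexplicit}) expresses the Daubechies type wavelet as a finite linear combination
\[
\psi_m^{\Lambda_0}(t) = \sum_{j=-n+1}^{n+1} \nu_j^{[m]}\,\varphi_{m+1}^{\Lambda_0}(2t-j),
\]
since $M^{[m]}$ is a Laurent polynomial with finitely many nonzero coefficients. Consequently every pointwise regularity property of $\varphi_{m+1}^{\Lambda_0}$ (membership in $C^{\ell}$, or having $\ell$-th derivative in $Lip(\alpha)$) is automatically inherited by $\psi_m^{\Lambda_0}$, with a loss of at most a factor $2^\ell$ in the relevant seminorm coming from the dilation by $2$. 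Applying the identical observation to the stationary Daubechies mask $M^{D_{2n}}$ shows that the classical Daubechies wavelet $\psi$ has the same order of smoothness as $\varphi^{D_{2n}}$. So it suffices to compare the two scaling functions $\varphi_{m+1}^{\Lambda_0}$ and $\varphi^{D_{2n}}$.

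Next I would invoke Theorem \ref{AsymFourier2} with $m$ replaced by $m+1$. Starting from the assumed smoothness of $\psi$ (equivalently, of $\varphi^{D_{2n}}$), the standard Fourier decay estimates for the Daubechies scaling function (see Chapter 7 of \cite{daubechies}) supply $\varepsilon>0$ and $C>0$ with
\[
|\widehat{\varphi^{D_{2n}}}(\omega)| \leq C(|\omega|+1)^{-\ell-1-\alpha-\varepsilon}, \qquad \omega\in\mathbb{R}.
\]
This is precisely the hypothesis of Theorem \ref{AsymFourier2}, which then yields that $\varphi_{m+1}^{\Lambda_0}$ has $\ell$-th derivative in $Lip(\alpha)$. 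Combined with the first paragraph, $\psi_m^{\Lambda_0}$ has the same regularity, which is at least that of $\psi$.

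The main obstacle I anticipate lies in the passage between the two formulations of regularity in the second step. Theorem \ref{AsymFourier2} is phrased in terms of Fourier decay with a strict margin $\varepsilon>0$ beyond the exponent $\ell+1+\alpha$, whereas the smoothness of a Daubechies wavelet is usually stated as a direct H\"older-type regularity. One must check that the known decay rate of $\widehat{\varphi^{D_{2n}}}$ really does beat, by some positive $\varepsilon$, the exponent matching its actual Lipschitz order; for the classical construction this is standard (the Hölder estimates of Daubechies come from Fourier bounds that have a margin built in), but it is the technical point that has to be handled carefully if one wants no loss in the matching of exponents between $\psi$ and $\psi_m^{\Lambda_0}$.
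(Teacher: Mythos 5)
Your proposal is correct and follows essentially the same route as the paper: the paper's proof is precisely the one-line observation that the corollary follows from formula (\ref{psiexplicit}) (the wavelet as a finite linear combination of integer translates of $\varphi_{m+1}^{\Lambda_{0}}(2t-\cdot)$, which inherits the scaling function's regularity) together with Theorem \ref{AsymFourier2}. Your added remark about the $\varepsilon$-margin in the Fourier-decay hypothesis is a fair caveat, but it concerns how one quantifies ``the smoothness of the classical Daubechies wavelet'' and is implicit in the paper's (equally informal) statement rather than a gap in your argument.
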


\begin{proof}
This follows immediately from formula (\ref{psiexplicit}) and Proposition
\ref{AsymFourier2}.
\end{proof}

Finally we mention that the concept of reproduction is defined in MRA in the
following way, see e.g. \cite{Vonesch}:

\begin{definition}
A non-stationary multiresolutional analysis $\left(  V_{m}\right)
_{m\in\mathbb{Z}}$ with compactly supported scaling functions $\varphi_{m}$
reproduces a function $f:\mathbb{R}\rightarrow\mathbb{C}$ if for each
$m\in\mathbb{Z}$ there exist complex coefficients $c_{m}$ such that
\begin{equation}
f\left(  x\right)  =\sum_{l\in\mathbb{Z}}c_{m}\varphi_{m}\left(
2^{m}x-l\right)  . \label{eqMRARep}%
\end{equation}

\end{definition}

Note that the sum in (\ref{eqMRARep}) is well defined since $\varphi_{m}$ is
compactly supported. It is proved in \cite{Vonesch} that (\ref{eqlast})
implies that the MRA $\left(  V_{m}\right)  _{m\in\mathbb{Z}}$ of the
Daubechies subdivision scheme reproduces the space $E\left(  \lambda
_{0},...,\lambda_{n-1}\right)  $ and the wavelets $\psi_{m}$ have vanishing
moments in the sense that
\[
\int\psi_{m}^{\Lambda_{0}}\left(  t\right)  e^{\lambda_{j}t}dt=0\qquad
\text{for all }j=0,...n-1.
\]

As we mentioned in the introduction an attempt to construct (non-stationary)
Daubechies type wavelets based on the subdivision scheme for exponential
polynomials in analogy to the case of Deslauriers and Dubuc, was carried out
in \cite{Vonesch}. The authors showed that many results and techniques of
classical multiresolutional analysis, briefly MRA, carry over to
non-stationary MRA (except that all filters and scaling functions are
dependent on the scale of the multiresolution) and they introduced
non-stationary Daubechies-type wavelets reproducing a family of exponentials
$\left\{  e^{\lambda_{j}t}\right\}  _{j=1}^{n}$ \emph{under the assumption}
that the filters have sufficiently long support. This strong assumption has
been made in order to guarantee that the symbols of the nonstationary scheme
are non-negative. However, as we mentioned in the introduction, a main result
proven in \cite{micchelliWAnonstationary} shows that the symbols are
non-negative under the simple assumption that the exponents $\lambda_{j}$ for
$j=1,...,n$ are real. As a consequence, the question about  the order of
regularity of the new Daubechies type wavelets, is not addressed in
\cite{Vonesch}.

\end{document}